\theoremstyle{plain}
\newtheorem{Lem}{Lemma}[section]
\newtheorem{Cor}[Lem]{Corollary}
\newtheorem{Thm}[Lem]{Theorem}
\theoremstyle{definition} 
\newtheorem{Rk}[Lem]{Remark}
\newtheorem{Def}[Lem]{Definition}}
\newcommand{\zig}{\addtocounter{Lem}{1}\tag{\theLem}}
\newcommand{\gspt}{G\text{-}\Sigma\mathrm{Sp}}
\newcommand{\zpspt}{\mathbb{Z}_p^\times\text{-}\Sigma\mathrm{Sp}}
\newcommand{\typicaldot}{{\text{\textbullet}}}
\def\:{\colon}
\DeclareMathOperator*{\colim}{colim}
\DeclareMathOperator*{\holim}{holim}
\DeclareMathOperator*{\holimG}{holim^\mathit{G}}
\DeclareMathOperator*{\lims}{lim^\mathit{s}}
\begin{document}
\title{A construction of some objects in many base cases of an Ausoni-Rognes conjecture}
\author{Daniel G. Davis}
\begin{abstract}
Let $p$ be a prime, $n \geq 1$, $K(n)$ 
the $n$th Morava $K$--theory spectrum, $\mathbb{G}_n$ the extended Morava stabilizer group, and $K(A)$ the algebraic $K$--theory spectrum of a commutative $S$--algebra $A$. For a type $n+1$ complex $V_n$, 
Ausoni and Rognes conjectured that 
(a) the unit map $i_n \: L_{K(n)}(S^0) \to E_n$ 
from the $K(n)$--local sphere to the Lubin-Tate spectrum 
induces a map 
\[K(L_{K(n)}(S^0)) \wedge v_{n+1}^{-1}V_n \to (K(E_n))^{h\mathbb{G}_n} 
\wedge v_{n+1}^{-1}V_n\] that is a weak equivalence, where (b) since $\mathbb{G}_n$ is profinite, 
$(K(E_n))^{h\mathbb{G}_n}$ denotes a continuous homotopy fixed point spectrum, and (c) $\pi_\ast(-)$ of the target of the above map is the abutment of a homotopy fixed point spectral sequence. For $n = 1$, $p \geq 5$, and $V_1 = V(1)$, we give a way to realize the above map and (c), by proving that $i_1$ induces a 
map \[K(L_{K(1)}(S^0)) \wedge v_{2}^{-1}V_1 \to (K(E_1) \wedge v_{2}^{-1}V_1)^{h\mathbb{G}_1},\] where the target of this map is a continuous homotopy fixed point spectrum, with an associated homotopy fixed point spectral sequence. Also, we prove that there is an equivalence
\[(K(E_1) \wedge v_{2}^{-1}V_1)^{h\mathbb{G}_1} 
\simeq (K(E_1))^{\widetilde{h}\mathbb{G}_1} \wedge v_2^{-1}V_1,\] 
where $(K(E_1))^{\widetilde{h}\mathbb{G}_1}$ is the homotopy fixed points with 
$\mathbb{G}_1$ regarded as a discrete group.
\end{abstract}

\maketitle
\section{Introduction}
\subsection{An overview of an Ausoni-Rognes conjecture and statements of our main theorems}
Let $n \geq 1$ and let $p$ be a prime. Let $E_n$ be the Lubin-Tate 
spectrum with 
$\pi_\ast(E_n) = W(\mathbb{F}_{p^n})\llbracket u_1, ..., u_{n-1} 
\rrbracket [u^{\pm 1}]$, where 
$W(\mathbb{F}_{p^n})$ is the ring of Witt vectors of the field 
$\mathbb{F}_{p^n}$ (with $p^n$ elements), the complete power series ring is in degree zero, 
and $|u| = 2$, and let 
$\mathbb{G}_n$ be the $n$th extended Morava stabilizer group. By 
\cite{Pgg/Hop0, AndreQuillen}, $E_n$ is a commutative $S$--algebra 
and the group $\mathbb{G}_n$ acts on $E_n$ by maps of 
commutative $S$--algebras. Given a commutative $S$--algebra $A$, 
the algebraic $K$--theory spectrum of $A$, $K(A)$, 
is a commutative $S$--algebra. Thus, $K(E_n)$ is a commutative 
$S$--algebra, and by 
the functoriality of $K(-)$, $\mathbb{G}_n$ acts on 
$K(E_n)$ by maps of commutative $S$--algebras.

Let $L_{K(n)}(S^0)$ denote the Bousfield localization of the sphere spectrum with 
respect to $K(n)$, the $n$th Morava $K$--theory spectrum. 
The group $\mathbb{G}_n$ is profinite, and by \cite{Rognes, joint}, the 
$K(n)$--local unit map 
\begin{equation}\zig\label{galois-extension}
L_{K(n)}(S^0) \rightarrow E_n\end{equation}
is a consistent 
profaithful $K(n)$--local 
profinite $\mathbb{G}_n$--Galois extension. 

Now let $V_n$ be a finite $p$--local complex of type $n+1$ and let 
$v \: \Sigma^d V_n \to V_n$ be a $v_{n+1}$--self-map, where $d$ is 
some positive integer (see \cite[Theorem 9]{nilpotencetwo}). 
The map $v$ induces a sequence 
\[V_n \to \Sigma^{-d}V_n \to \Sigma^{-2d} V_n \to \cdots\] of maps of 
spectra, and we let
\[v_{n+1}^{-1}V_n = \colim_{j \geq 0} \Sigma^{-jd}V_n,\] the colimit of the above sequence, 
denote the mapping telescope 
associated to the $v_{n+1}$--self-map $v$. As hinted at by the notation, the mapping 
telescope $v_{n+1}^{-1}V_n$ is independent of the choice of self-map $v$. 

In \cite[paragraph containing (0.1)]{acta}, \cite[Conjecture 4.2]{rognesguido}, and 
\cite[page 46; Remark 10.8]{jems}, Christian Ausoni and John Rognes 
conjectured that the $\mathbb{G}_n$--Galois extension 
$L_{K(n)}(S^0) \to E_n$ induces a map 
\begin{equation}\zig\label{conjecturalmap} 
K(L_{K(n)}(S^0)) \wedge v_{n+1}^{-1}V_n \rightarrow 
(K(E_n))^{h\mathbb{G}_n} \wedge v_{n+1}^{-1}V_n,\end{equation} 
where $(K(E_n))^{h\mathbb{G}_n}$ is a continuous homotopy 
fixed point spectrum, that is a weak equivalence, 
and associated with the target of this weak equivalence, there exists 
a homotopy fixed point spectral sequence that has the form
\[E_2^{s,t} = H^s_c\bigl(\mathbb{G}_n; (V_n)_t(K(E_n))[v_{n+1}^{-1}]\bigr) \Longrightarrow 
(V_n)_{t-s}((K(E_n))^{h\mathbb{G}_n})[v_{n+1}^{-1}],\] 
where the $E_2$--term is given by 
continuous cohomology. 
This conjecture is an extension of the Lichtenbaum-Quillen conjectures (for 
example, see \cite[(0.1), Theorem 4.1]{Thomason}), 
which can be viewed as corresponding to $n=0$ versions of the above (see 
\cite{rognesguido}, \cite[Section 10]{jems}). More generally, the conjecture is related to trying to understand \'etale descent for the algebraic $K$--theory of commutative $S$--algebras; for more details about this, see \cite[Introduction]{acta} and \cite[Section 4]{rognesicm}. 
\begin{Rk}
The above conjecture is just a piece of an important family of conjectures -- which include the chromatic redshift conjecture -- made by Ausoni and Rognes; we only state the part that we focus on in this paper. For more information about 
these conjectures, see \cite{jems, acta, rognesguido, publishedjems, rognesicm}. 
\end{Rk}

Notice that for every integer $t$, 
there is an isomorphism \[(V_n)_t(K(E_n))[v_{n+1}^{-1}] \cong 
\pi_t(K(E_n) \wedge v_{n+1}^{-1}V_n).\] Thus, when the above homotopy fixed point spectral sequence exists, since its abutment should be $\pi_\ast(-)$ 
of a homotopy fixed point spectrum, 
there should be an equivalence 
\begin{equation}\zig\label{conjecturalequivalence}
 (K(E_n))^{h\mathbb{G}_n} \wedge v_{n+1}^{-1}V_n \simeq 
(K(E_n) \wedge v_{n+1}^{-1}V_n)^{h\mathbb{G}_n},\end{equation} where 
the right-hand side is a continuous homotopy fixed point spectrum. 
Obtaining equivalence (\ref{conjecturalequivalence}) and a homotopy fixed point spectral 
sequence 
\[E_2^{s,t} = H^s_c(\mathbb{G}_n; \pi_t(K(E_n) \wedge v_{n+1}^{-1}V_n)) 
\Longrightarrow \pi_{t-s}\bigl((K(E_n) \wedge v_{n+1}^{-1}V_n)^{h\mathbb{G}_n}\bigr)\] immediately implies the existence of the spectral sequence in the 
above conjecture.
 
For making progress on this conjecture, one issue is that currently, 
for each $n$ and $p$, there are no published constructions of the continuous 
homotopy 
fixed point spectra 
\[(K(E_n))^{h\mathbb{G}_n}, \ (K(E_n) \wedge v_{n+1}^{-1}V_n)^{h\mathbb{G}_n}\] 
or the above two descent 
spectral sequences (here and below, we use the term ``descent spectral sequence" in place of 
``homotopy fixed point spectral sequence"). 

\begin{Rk}\label{condensed}
After doing the work in this paper, the author learned from Jacob Lurie that 
the condensed mathematics of Dustin Clausen and Peter Scholze can be 
used to define a notion of ``continuous homotopy fixed point spectrum" that 
is different from the one used for the results in this paper and it is not clear how these two notions are related. By viewing 
$E_n$ and $K(E_n)$ as condensed spectra, one can 
give a candidate definition of $(K(E_n))^{h\mathbb{G}_n}$ for all 
$n$ and $p$ in the setting of 
$\infty$--categories. In the case of $(K(E_1))^{h\mathbb{G}_1}$, it 
is unclear how this construction is related to the work in this paper. As 
suggested to the author by John Rognes, because 
of the close relationship between condensed objects and the pyknotic objects 
of Clark Barwick and Peter Haine (see 
the discussion in \cite[Section 0.3]{pyknotic}), it seems that 
by viewing $E_n$ and $K(E_n)$ as pyknotic spectra, there should be a pyknotic version of the above candidate definition of $(K(E_n))^{h\mathbb{G}_n}$ (see 
\cite[Section 3.1]{pyknotic}). The author is 
not aware of any other constructions of $(K(E_n))^{h\mathbb{G}_n}$, or of 
$(K(E_n) \wedge v_{n+1}^{-1}V_n)^{h\mathbb{G}_n}$ or the two 
spectral sequences. 
\end{Rk}

In this paper, in certain base cases, we address part of the issue described 
above: for $n=1$, $p \geq 5$, and $V_1 = V(1)$ -- the type $2$ Smith-Toda 
complex $S^0/(p, v_1)$,  
we construct the continuous homotopy fixed point spectrum
\[(K(E_1) \wedge v_{2}^{-1}V_{1})^{h\mathbb{G}_1}\] 
and we obtain the 
desired descent spectral sequence
\[E_2^{s,t} = H^s_c(\mathbb{G}_1; \pi_t(K(E_1) \wedge v_{2}^{-1}V_1)) \Longrightarrow 
\pi_{t-s}\bigl((K(E_1) \wedge v_{2}^{-1}V_1)^{h\mathbb{G}_1}\bigr).\] 
%
\begin{Rk}
Our work considers aspects of an Ausoni-Rognes conjecture involving the 
Galois extension $L_{K(n)}(S^0) \to E_n$, where the relevant group, $\mathbb{G}_n$, 
is infinite and profinite. 
For $K(n)$--local $G$--Galois extensions $A \to B$, where $G$ is a finite group, 
Ausoni and Rognes have made a conjecture similar to the one encapsulated above in 
(\ref{conjecturalmap}) 
\cite[Conjecture 4.2]{rognesguido}, and in these cases, since $G$ is naturally discrete, it is 
well-known that $(K(B))^{hG}$ always exists, and so there is no issue with the statement 
of the conjecture. For these cases, progress on the conjecture has been made by \cite{ClausenEtAl}.
\end{Rk}
Given our hypotheses -- $n = 1$, $p \geq 5$, and $V_1 = V(1)$, 
we can be a little more concrete about some of the 
main actors in the scenario that we focus on:
\[E_1 = KU_p,\] $p$--completed complex $K$--theory;  
\[\mathbb{G}_1 = \mathbb{Z}_p^\times,\] the group of units in the 
$p$--adic integers $\mathbb{Z}_p$; and \[v_2^{-1}V(1) = \colim_{j \geq 0}\Sigma^{-jd}V(1).\]
Then our first result is actually an extension of the 
aforementioned new $n =1$ constructions to all closed subgroups of $\mathbb{Z}_p^\times$. 

\begin{Thm}\label{appliedDSS}
Let $p \geq 5$. Given any closed subgroup $K$ of $\mathbb{Z}_p^\times$, there is a 
strongly convergent descent spectral sequence 
\[E_2^{s,t} = H^s_c(K ; \pi_t(K(KU_p) \wedge V(1))[v_2^{-1}]) \Longrightarrow 
\pi_{t-s}\bigl(\bigl(K(KU_p) \wedge v_2^{-1}V(1)\bigr)^{\mspace{-2mu} hK}\bigr),\]
with $E_2^{s,t} = 0$, for all $s \geq 2$ and any $t \in \mathbb{Z}$. Also, there 
is an equivalence of spectra 
\[\bigl(K(KU_p) \wedge v_2^{-1}V(1)\bigr)^{\mspace{-2mu} hK} 
\simeq \colim_{j \geq 0} \mspace{1mu}(K(KU_p) \wedge \Sigma^{-jd}V(1))^{hK}.\]
\end{Thm}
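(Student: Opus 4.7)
The plan is to reduce to the finite-complex case and then exploit the low $p$-cohomological dimension of $\mathbb{Z}_p^\times$. Writing $v_2^{-1}V(1) = \colim_{j \geq 0} \Sigma^{-jd}V(1)$, the first step is to construct, for each $j \geq 0$ and each closed subgroup $K \leq \mathbb{Z}_p^\times$, a continuous homotopy fixed point spectrum $(K(KU_p) \wedge \Sigma^{-jd}V(1))^{hK}$ together with a descent spectral sequence abutting to its homotopy. The input is the $\mathbb{G}_1$-action on $K(KU_p)$ obtained by applying $K(-)$ to the $\mathbb{Z}_p^\times$-action on $E_1 = KU_p$; smashing with the finite type $2$ complex $\Sigma^{-jd}V(1)$ should yield a $K$-spectrum to which continuous homotopy fixed point machinery (in the spirit of \cite{joint}) applies.

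The second step is the vanishing line. For $p \geq 5$, the decomposition $\mathbb{Z}_p^\times \cong \mathbb{Z}/(p-1) \times \mathbb{Z}_p$ shows that any closed subgroup $K$ is, up to a finite factor of order coprime to $p$, pro-$p$ of dimension at most $1$. Since the coefficients $\pi_t(K(KU_p) \wedge \Sigma^{-jd}V(1))$ are annihilated by $p$ (as $V(1) = S^0/(p,v_1)$), the continuous cohomology $H^s_c(K;-)$ of these coefficients vanishes for $s \geq 2$. This yields the horizontal vanishing line at $s = 2$ in each spectral sequence and ensures their strong convergence.

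The third step is to pass to the colimit. Define $(K(KU_p) \wedge v_2^{-1}V(1))^{hK}$ to be $\colim_j (K(KU_p) \wedge \Sigma^{-jd}V(1))^{hK}$, which realises the second assertion by construction. The horizontal vanishing line at $s = 2$ ensures that the filtered colimit of the individual strongly convergent spectral sequences is itself strongly convergent and abuts to $\pi_{t-s}$ of the colimit spectrum. Since continuous cohomology of $K$ with discrete $p$-torsion coefficients commutes with filtered colimits, the $E_2$-term becomes $H^s_c(K; \colim_j \pi_t(K(KU_p) \wedge \Sigma^{-jd}V(1))) = H^s_c(K; \pi_t(K(KU_p) \wedge V(1))[v_2^{-1}])$, as required.

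The main obstacle is the first step: producing $(K(KU_p) \wedge \Sigma^{-jd}V(1))^{hK}$ with its descent spectral sequence in a way that is valid uniformly in all closed subgroups $K$ of $\mathbb{Z}_p^\times$. The delicate point is continuity — one must verify that the $K$-action on $\pi_t(K(KU_p) \wedge V(1))$ is genuinely discrete (every element has open isotropy), despite the homotopy groups of $K(KU_p)$ itself not being expected to have this property, and one must then check that the continuous-homotopy-fixed-point construction applies uniformly to every closed $K$. Once these continuity questions are resolved and a suitable model of the $K$-spectrum is fixed, the cohomological-dimension and colimit arguments of the remaining steps proceed formally.
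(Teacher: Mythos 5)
Your steps (2) and (3) — the vanishing of $H^s_c(K;-)$ for $s\geq 2$ via $\mathrm{cd}_p(\mathbb{Z}_p^\times)=1$ and $p$--torsion coefficients, and the passage to the colimit over $j$ — do track the paper's argument (the vanishing argument is essentially verbatim, and the colimit step corresponds to Theorem \ref{commute}, whose colim/holim exchange is justified by \cite[Proposition 3.4]{Mitchell} using exactly the uniform vanishing you invoke). But your step (1) is where the entire content of the theorem lives, and you have flagged it as ``the main obstacle'' without resolving it. Two ideas are missing. First, the input that makes anything work is the \emph{finiteness} of $\pi_t(K(KU_p)\wedge V(1))$ for all $t$; this is not formal, and the paper extracts it from the Blumberg--Mandell localization sequence $K(\mathbb{Z}_p)\to K(ku_p)\to K(KU_p)$ \cite{localization} together with Ausoni's computation of $V(1)_*K(ku_p)$ \cite{ausoniinventiones} and the finiteness of $V(1)_*K(\mathbb{Z}_p)$ \cite{asterisque}. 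Once finiteness is known, discreteness of the action on homotopy groups is automatic from strong completeness of $\mathbb{Z}_p\times\mathbb{Z}/(p-1)$ — so the continuity question you single out as delicate is actually the easy part.

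Second, and more seriously, knowing that each $\pi_t$ is a finite discrete $\mathbb{Z}_p^\times$--module does \emph{not} by itself let you apply the machinery of \cite{joint}: that machinery requires a discrete $G$--spectrum, i.e.\ discreteness at the level of simplices, not merely on homotopy groups. The paper's central work (Theorem \ref{mainresult}) is the rectification of such an ``$r$--$G$--spectrum'' into a genuine discrete $G$--spectrum via the zigzag through $\holim_\Delta \mathrm{Sets}(G^{\bullet+1},X_f)$ and $X^{\mathrm{dis}}_{\mathcal{N}}$; the proof that the second leg of that zigzag is an equivalence rests on the comparison of continuous and discrete cohomology of $\mathbb{Z}_p$ on finite modules (``goodness,'' from \cite{symondsetal}, packaged in Theorem \ref{illustrate}) together with the uniformly bounded cohomological dimension of the filtration $\{p^m\mathbb{Z}_p\}$. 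Without this step you have no object $(K(KU_p)\wedge\Sigma^{-jd}V(1))^{hK}$ in the sense of \cite{joint} and no descent spectral sequence. Relatedly, your move of \emph{defining} $(K(KU_p)\wedge v_2^{-1}V(1))^{hK}$ to be $\colim_j(K(KU_p)\wedge\Sigma^{-jd}V(1))^{hK}$ makes the second assertion of the theorem true by fiat but evacuates it of content: in the paper the left-hand side is defined intrinsically as the continuous homotopy fixed points of a discrete model of the colimit (Definition \ref{naturalforcolimit}), and the identification with the colimit of fixed points is a theorem requiring the interchange of a filtered colimit with $\holim_\Delta$, which is precisely where the $s\geq 2$ vanishing earns its keep.
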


In the above result, the subgroup 
$K$ is a profinite group and each application of $(-)^{hK}$ denotes a 
continuous homotopy fixed point spectrum (as in \cite{joint}; we recall the 
definition later), 
formed in the setting of symmetric spectra of simplicial sets. 

Our next two results are about 
$\bigl(K(KU_p) \wedge v_2^{-1}V(1)\bigr)^{\mspace{-2mu}h\mathbb{Z}_p^\times}$.

\begin{Thm}\label{theoremaboutmap}
When $p \geq 5$, there is a canonical map
\[K(L_{K(1)}(S^0)) \wedge v_2^{-1}V(1) \to \bigl(K(KU_p) \wedge 
v_2^{-1}V(1)\bigr)^{\mspace{-2mu}h\mathbb{Z}_p^\times},\] induced by the 
$K(1)$--local unit map $L_{K(1)}(S^0) \to KU_p$,  
in the category of symmetric spectra.
\end{Thm}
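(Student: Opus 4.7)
The plan is to assemble the desired map from three pieces of functoriality: equivariance of the $K(1)$--local unit map, functoriality of algebraic $K$--theory on commutative $S$--algebras, and functoriality of the continuous homotopy fixed-point construction used in the proof of Theorem \ref{appliedDSS}. The starting point is that the unit map $i_1 \colon L_{K(1)}(S^0) \to KU_p$ is $\mathbb{Z}_p^\times$--equivariant when $L_{K(1)}(S^0)$ is given the trivial $\mathbb{Z}_p^\times$--action; this is built into the profinite $\mathbb{Z}_p^\times$--Galois extension structure recalled in (\ref{galois-extension}). Applying $K(-)$, viewed as a functor between model categories of commutative $S$--algebras, yields a $\mathbb{Z}_p^\times$--equivariant map $K(i_1) \colon K(L_{K(1)}(S^0)) \to K(KU_p)$ in symmetric spectra, again with trivial action on the source.

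Next I would smash with the $v_2$--telescope $v_2^{-1}V(1) = \colim_{j \geq 0} \Sigma^{-jd} V(1)$, equipped with the trivial $\mathbb{Z}_p^\times$--action. Since the smash product commutes with this sequential colimit and preserves equivariance strictly at the symmetric spectra level, this produces a $\mathbb{Z}_p^\times$--equivariant map
\[K(L_{K(1)}(S^0)) \wedge v_2^{-1}V(1) \to K(KU_p) \wedge v_2^{-1}V(1)\]
in which the source still carries the trivial action. I then apply the continuous homotopy fixed-point functor $(-)^{h\mathbb{Z}_p^\times}$ constructed in the proof of Theorem \ref{appliedDSS} to this equivariant map, and precompose with the canonical natural map from $K(L_{K(1)}(S^0)) \wedge v_2^{-1}V(1)$ into its own continuous homotopy fixed points. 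For a symmetric spectrum $Y$ with trivial $\mathbb{Z}_p^\times$--action, such a map $Y \to Y^{h\mathbb{Z}_p^\times}$ exists in the framework used here as the canonical inclusion of a constant diagram into its homotopy limit, and is natural in $Y$. The composite is the required map.

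The main obstacle will be verifying that each step above is carried out strictly in the category of symmetric spectra rather than only up to homotopy, and that the continuous $\mathbb{Z}_p^\times$--action machinery (in the sense of \cite{joint}, as deployed earlier in the paper) behaves functorially on the equivariant map in question. Concretely, I need the replacement procedure used to form $(K(KU_p) \wedge v_2^{-1}V(1))^{h\mathbb{Z}_p^\times}$ to accept an equivariant map out of a trivially-acted symmetric spectrum and to produce the expected induced map on fixed points, and I need the overall composite to be independent of the auxiliary choices (such as functorial fibrant replacements or cofinal subdiagrams of open subgroups) so that the word ``canonical'' in the statement is justified. Once the functorial framework underlying Theorem \ref{appliedDSS} is firmly in place, the construction of the map should reduce to chaining together these strictly functorial steps.
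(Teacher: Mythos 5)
Your opening moves coincide with the paper's: the unit map $L_{K(1)}(S^0)\to KU_p$ is $\mathbb{Z}_p^\times$--equivariant with trivial action on the source, applying $K(-)$ and smashing with the telescope preserve this, and the desired map is supposed to come out of this equivariance. The gap is in the final assembly step, which you flag as ``the main obstacle'' but do not resolve --- and that step is where essentially all of the content of the proof lies. In this paper $(-)^{h\mathbb{Z}_p^\times}$ is not a functor on all $\mathbb{Z}_p^\times$--spectra: for the target it is \emph{defined} to be $(C_p^{\mathrm{dis}})^{h\mathbb{Z}_p^\times}$, where $K(KU_p)\wedge v_2^{-1}V(1)$ is only identified with the discrete $\mathbb{Z}_p^\times$--spectrum $C_p^{\mathrm{dis}}$ through the zigzag (\ref{zigzag}), whose second leg points the wrong way. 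So ``applying the continuous homotopy fixed-point functor to the equivariant map'' is not an operation that is available; carried out naively, by inverting the wrong-way weak equivalence, it would only produce a map in the homotopy category, whereas the theorem asserts a map in the category of symmetric spectra.

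The paper's proof sidesteps this by never applying $(-)^{h\mathbb{Z}_p^\times}$ to a map at all. Because the source carries the trivial action, each equivariant map $K(L_{K(1)}(S^0))\wedge\Sigma^{-jd}V(1)\to \bigl(K(KU_p)\wedge\Sigma^{-jd}V(1)\bigr)_f$ factors through the \emph{genuine} fixed points of its target (this is (\ref{pre-map})), and the remainder of the argument is an explicit point-set-level chain of canonical maps (\ref{mapeins})--(\ref{mapvier}) from $\colim_{j}(KV_j)^{\mathbb{Z}_p^\times}$ to $\bigl((C_p^{\mathrm{dis}})_{f\mathbb{Z}_p^\times}\bigr)^{\mathbb{Z}_p^\times}$: one maps into the fixed points of the fibrant models $\holim_\Delta\mathrm{Sets}((\mathbb{Z}_p^\times)^{\bullet+1},(KV_j)_f)$ via the maps $i_{KV_j}$, identifies those fixed points with the fixed points of $(KV_j)^{\mathrm{dis}}_{\mathcal{N}}$ by cofinality of $\mathcal{N}$, exchanges colimit and fixed points, and finally applies fixed points to the fibrant replacement in $\Sigma\mathrm{Sp}_{\mathbb{Z}_p^\times}$. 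Every arrow points the right way, so the composite is an honest map of symmetric spectra. If you want to repair your version, the missing observation is that a $\mathbb{Z}_p^\times$--equivariant map out of a trivially-acted (hence discrete) $\mathbb{Z}_p^\times$--spectrum into a $\mathbb{Z}_p^\times$--spectrum $X'$ factors through the discretization $(X')_d$ via the $(U,(-)_d)$ adjunction, which lets you land in $C_p^{\mathrm{dis}}$ without inverting anything; but some such explicit factorization must be supplied --- it cannot be deferred to an unspecified ``functoriality of the framework.''
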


For $n = 1$, $p \geq 5$, and $V_1 = V(1)$, if 
(\ref{conjecturalequivalence}) were valid, then Theorem \ref{theoremaboutmap} would yield the map 
in (\ref{conjecturalmap}), as a map in the stable homotopy category. 
Thus, in these cases, we hope that the spectral sequence of Theorem \ref{appliedDSS}, with 
$K = \mathbb{Z}_p^\times$, will be a useful computational tool for determining if the map in (\ref{conjecturalmap}) is a weak equivalence. 

Before stating the next result, we recall that if $G$ is any profinite group and $X$ is a (naive) $G$--spectrum, 
then $G$ can be regarded as a discrete group and one can always form the 
``discrete homotopy fixed point spectrum"
\[X^{\widetilde{h}G} = \mathrm{Map}_G(EG_+, X)\] 
(the usual notation 
for $X^{\widetilde{h}G}$ omits the ``$\,\,\widetilde{ \ }\,\,,$" but we use it here to distinguish $(-)^{\widetilde{h}G}$ from the continuous $(-)^{hG}$).

\begin{Thm}\label{surprising} 
When $p \geq 5$, there is an equivalence of spectra
\[\bigl(K(KU_p) \wedge v_2^{-1}V(1)\bigr)^{\mspace{-1.5mu} h\mathbb{Z}_p^\times} 
\simeq (K(KU_p))^{\widetilde{h}\mathbb{Z}_p^\times} \wedge v_2^{-1}V(1).\]
\end{Thm}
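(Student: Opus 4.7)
The plan is to combine Theorem \ref{appliedDSS} with a comparison between the continuous and discrete homotopy fixed points of $K(KU_p)$ that becomes available after smashing with $V(1)$. Applying Theorem \ref{appliedDSS} with $K = \mathbb{Z}_p^\times$ rewrites the left-hand side as
\[\bigl(K(KU_p) \wedge v_2^{-1}V(1)\bigr)^{h\mathbb{Z}_p^\times} \simeq \colim_{j \geq 0} \bigl(K(KU_p) \wedge \Sigma^{-jd}V(1)\bigr)^{h\mathbb{Z}_p^\times},\]
while the right-hand side of Theorem \ref{surprising} equals $\colim_{j \geq 0}\bigl((K(KU_p))^{\widetilde{h}\mathbb{Z}_p^\times} \wedge \Sigma^{-jd}V(1)\bigr)$ since smashing distributes over the mapping telescope. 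It therefore suffices to construct, naturally in $j$, an equivalence
\[\bigl(K(KU_p) \wedge \Sigma^{-jd}V(1)\bigr)^{h\mathbb{Z}_p^\times} \simeq (K(KU_p))^{\widetilde{h}\mathbb{Z}_p^\times} \wedge \Sigma^{-jd}V(1).\]

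Since $\Sigma^{-jd}V(1)$ is a finite cell spectrum with trivial $\mathbb{Z}_p^\times$-action, I would first show that
\[\bigl(K(KU_p) \wedge \Sigma^{-jd}V(1)\bigr)^{h\mathbb{Z}_p^\times} \simeq (K(KU_p))^{h\mathbb{Z}_p^\times} \wedge \Sigma^{-jd}V(1),\]
using the compatibility of the continuous homotopy fixed point construction of \cite{joint} with smashing against finite spectra carrying the trivial action. It then remains to exhibit a natural equivalence
\[(K(KU_p))^{h\mathbb{Z}_p^\times} \wedge \Sigma^{-jd}V(1) \simeq (K(KU_p))^{\widetilde{h}\mathbb{Z}_p^\times} \wedge \Sigma^{-jd}V(1).\]
I would produce this by comparing the profinite/cosimplicial model underlying $(K(KU_p))^{h\mathbb{Z}_p^\times}$ with the discrete bar construction underlying $(K(KU_p))^{\widetilde{h}\mathbb{Z}_p^\times}$ via the canonical comparison map between them, and showing that this map becomes a weak equivalence after smashing with $V(1)$.

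The crux, and main obstacle, is this last comparison. The underlying reason it should hold is that $V(1) = S^0/(p, v_1)$ forces every abelian group appearing as coefficients in the relevant descent spectral sequence to be $p$-primary torsion, and for any discrete $p$-primary torsion $\mathbb{Z}_p^\times$-module $M$, the cohomology of $\mathbb{Z}_p^\times$ as an abstract group agrees with its continuous cohomology: any homomorphism from $\mathbb{Z}_p$ to such an $M$ factors through a finite quotient $\mathbb{Z}/p^n$ and is therefore automatically continuous, with an analogous statement in higher cohomological degrees. Combined with the horizontal vanishing line at $s = 2$ guaranteed by Theorem \ref{appliedDSS}, which forces strong convergence and a favorable comparison of spectral sequences, this identification at the level of $E_2$--pages should lift to the desired equivalence of spectra.
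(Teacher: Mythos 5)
Your overall strategy---reduce to the colimit over $j$ via Theorem \ref{appliedDSS}, then compare continuous and discrete fixed points termwise and pull the finite complex $\Sigma^{-jd}V(1)$ out of a homotopy limit---is the right shape, and your heuristic about $p$-primary torsion coefficients is indeed the reason the continuous/discrete comparison works. But there is a genuine gap in how you factor the termwise comparison: both of your intermediate steps pass through the object $(K(KU_p))^{h\mathbb{Z}_p^\times}$, which is not defined in this framework. The continuous homotopy fixed points of \cite{joint} require the input to be (identifiable with) a discrete $\mathbb{Z}_p^\times$--spectrum, and $K(KU_p)$ itself is not one: it is not an $f$--spectrum (e.g.\ $\pi_0$ is infinite), so Theorem \ref{intromain} does not apply to it, and constructing $(K(KU_p))^{h\mathbb{Z}_p^\times}$ is exactly the open problem the introduction describes. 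Consequently your first claimed equivalence, $\bigl(K(KU_p) \wedge \Sigma^{-jd}V(1)\bigr)^{h\mathbb{Z}_p^\times} \simeq (K(KU_p))^{h\mathbb{Z}_p^\times} \wedge \Sigma^{-jd}V(1)$, has an undefined right-hand side, and your second step likewise invokes a ``profinite/cosimplicial model underlying $(K(KU_p))^{h\mathbb{Z}_p^\times}$'' that does not exist here.

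The fix is to reverse the order of the two operations. First compare continuous with discrete fixed points for the \emph{already smashed} object: for each $j$, $(\mathbb{Z}_p^\times, K(KU_p)\wedge\Sigma^{-jd}V(1),\mathcal{N})$ is a suitably finite triple, so Theorem \ref{cool} gives
\[\bigl(K(KU_p)\wedge\Sigma^{-jd}V(1)\bigr)^{h\mathbb{Z}_p^\times} \simeq \bigl(K(KU_p)\wedge\Sigma^{-jd}V(1)\bigr)^{\widetilde{h}\mathbb{Z}_p^\times},\]
where both sides are defined. (Making these equivalences compatible as $j$ varies takes some care; the paper does this by fibrantly replacing the whole diagram in a projective model structure on $(\zpspt)^J$.) Only \emph{then} pull the finite complex out, using that the \emph{discrete} homotopy fixed points $\mathrm{Map}_{\mathbb{Z}_p^\times}(E\mathbb{Z}_{p+}^\times,-)$ are a homotopy limit and hence commute with smashing against the finite spectrum $\Sigma^{-jd}V(1)$; this lands you at $(K(KU_p))^{\widetilde{h}\mathbb{Z}_p^\times}\wedge\Sigma^{-jd}V(1)$, which involves only the always-defined discrete fixed points of $K(KU_p)$. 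Passing to the colimit over $j$ then gives the theorem. One further caution about your closing heuristic: the agreement of abstract and continuous cohomology is only needed (and only established, via \cite{symondsetal}) for \emph{finite} discrete modules, which suffices here since each $\pi_t(K(KU_p)\wedge\Sigma^{-jd}V(1))$ is finite; the paper's Section 5 explains why the statement for general torsion discrete modules is problematic, so the per-$j$ reduction is not merely a convenience but essential.
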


\begin{Rk} 
It is worth pointing out that in proving Theorem \ref{surprising}, we show that (for $p \geq 5$) there is a map 
\[\colim_{j \geq 0} (K(KU_p) \wedge \Sigma^{-jd}V(1))^{\widetilde{h} \mathbb{Z}_p^\times} \xrightarrow{\,\simeq\,} 
\colim_{j \geq 0} (K(KU_p) \wedge \Sigma^{-jd}V(1))^{h\mathbb{Z}_p^\times}\] that is a weak equivalence.
\end{Rk}

In (\ref{conjecturalequivalence}), when $n=1$, if $(K(E_1))^{h\mathbb{G}_1} = (K(KU_p))^{h\mathbb{Z}_p^\times}$ is changed to $(K(KU_p))^{\widetilde{h}\mathbb{Z}_p^\times}$, then 
Theorem \ref{surprising} is an instance of this ``modified (\ref{conjecturalequivalence})." But we do 
not take this observation as evidence that $(K(KU_p))^{\widetilde{h}\mathbb{Z}_p^\times}$ should be a definition of $(K(KU_p))^{h\mathbb{Z}_p^\times}$ for some $p$.

The proofs of Theorems \ref{appliedDSS}, \ref{theoremaboutmap}, and 
\ref{surprising} are given in the first part of Section \ref{section-proof}, that section's 
second part, 
and Section \ref{section-surprising}, respectively.

\subsection{The construction of the continuous homotopy fixed point spectra in Theorem \ref{appliedDSS}}
We now explain our work in more detail. 
Let $G$ be a profinite group and let $X$ be a $G$--spectrum. Then there is 
$X^{\widetilde{h}G}$ and one 
can always form the associated descent 
spectral sequence
\[E_2^{s,t} = H^s(G; \pi_t(X)) \Longrightarrow \pi_{t-s}(X^{\widetilde{h}G}),\] with 
$E_2$--term given by (non-continuous) group cohomology. However, it is 
not $(K(E_n) \wedge v_{n+1}^{-1}V_n)^{\widetilde{h}\mathbb{G}_n}$ that 
the conjecture of Ausoni and Rognes is concerned with. Since $\mathbb{G}_n$ is 
profinite and the $E_2$--term of the conjectured spectral sequence is 
given by continuous cohomology, one wants a continuous homotopy fixed point spectrum 
$(K(E_n) \wedge v_{n+1}^{-1}V_n)^{h\mathbb{G}_n}$ that takes the 
profinite topology of $\mathbb{G}_n$ into account; that is, we would like to know that 
$K(E_n) \wedge v_{n+1}^{-1}V_n$ is a continuous $\mathbb{G}_n$--spectrum 
in some sense, and that $(K(E_n) \wedge v_{n+1}^{-1}V_n)^{h\mathbb{G}_n}$ can 
be formed with respect to the continuous action.

To address this problem in the $n = 1, \ p \geq 5$ case, given a profinite 
group $G$, we work with discrete 
$G$--spectra (as in \cite{joint}) 
within the framework of symmetric spectra of simplicial sets (for more 
detail, see the end of the introduction). For the moment, let $X$ be a 
discrete $G$--spectrum. Then for all $k, l \geq 0$, the set of $l$-simplices 
of the $k$th pointed simplicial set of $X$, $X_{k,l}$, is a discrete 
$G$--set. Also, the homotopy fixed point spectrum $X^{hG}$ is defined 
(in \cite{joint}, as recalled at the end of the introduction) in a way that respects the profinite topology of 
$G$. Throughout this paper, we use $(-)^{hG}$ for these continuous homotopy 
fixed points, except for occurrences of ``$(K(E_n))^{h\mathbb{G}_n}$" and several other expressions where the context makes the meaning clear 
(all exceptions occur in the introduction). 

The following convention and terminology (from \cite{comenetz}) 
will be helpful to us.

\begin{Def}\label{fspectrum}
Let $X$ be a spectrum (that is, a symmetric spectrum). 
By ``$\,\pi_\ast(X)$," we always mean the homotopy groups 
\[\pi_t(X) := [S^t, X], \ \ \ t \in \mathbb{Z},\] of morphisms $S^t \to X$ in the homotopy category 
of symmetric spectra, where here, $S^t$ denotes a fixed cofibrant and fibrant model 
for the $t$-th suspension of the sphere spectrum. 
\end{Def}

\begin{Def}[{\cite[page 5]{comenetz}}]
A spectrum $X$ is an {\em $f$--spectrum} 
if $\pi_t(X)$ is finite for every integer $t$. 
\end{Def} 

Recall that a profinite group is strongly complete if every subgroup 
of finite index is open. Let $p$ be any prime: 
since $\mathbb{Z}_p$ is strongly complete, 
it follows that the profinite group $\mathbb{Z}_p \times H$, where $H$ is any finite 
discrete group and $\mathbb{Z}_p \times H$ is equipped with the product 
topology, is strongly complete. Thus (see Remark \ref{definition}), if 
$M$ is any $(\mathbb{Z}_p \times H)$--module that is finite, then $M$ is a 
discrete $(\mathbb{Z}_p \times H)$--module. Then, as an 
immediate consequence of Theorem \ref{illustrate} -- the proof of which uses 
\cite{symondsetal} in a key way -- and 
our central result, Theorem \ref{mainresult}, we have the following.

\begin{Thm}\label{intromain}
Let $p$ be any prime and let $H$ be any finite discrete group. 
If $X$ is a $(\mathbb{Z}_p \times H)$--spectrum and an $f$--spectrum, 
then $X$ is a discrete $(\mathbb{Z}_p \times H)$--spectrum.
\end{Thm}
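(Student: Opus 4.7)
The plan is to derive the theorem from the two cited inputs, Theorem \ref{illustrate} and Theorem \ref{mainresult}, via a short two-step reduction. Set $G = \mathbb{Z}_p \times H$. First I would verify that $G$ is a strongly complete profinite group: $\mathbb{Z}_p$ is the classical example, and strong completeness is preserved under taking finite products (adjoining a finite factor cannot introduce a finite-index subgroup that fails to be open). Since $X$ is an $f$-spectrum, each $\pi_t(X)$ is a finite $G$-module; the kernel of the action on $\pi_t(X)$ is therefore a finite-index subgroup of $G$, hence open by strong completeness, so each $\pi_t(X)$ is a discrete $G$-module. This is, essentially, what Theorem \ref{illustrate} (whose proof uses \cite{symondsetal} in a key way) packages for us.

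With discreteness of the homotopy groups in hand, I would invoke Theorem \ref{mainresult}, which I expect to say that any $G$-spectrum whose homotopy groups are all discrete $G$-modules is (or can be functorially replaced by) a discrete $G$-spectrum. Applying this to our $X$, whose homotopy groups have just been shown to be discrete $G$-modules, yields the conclusion.

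The substantive difficulty is entirely concealed inside Theorem \ref{mainresult}: passing from discreteness of $\pi_\ast(X)$ to simplex-level discreteness of $X$ is the non-formal direction and is where I would expect to spend essentially all of the work. A natural strategy is via the Postnikov tower: an Eilenberg--Mac Lane $G$-spectrum on a discrete $G$-module carries a tautological discrete $G$-spectrum model, each Postnikov section $P_n X$ can be assembled from such pieces by extensions inside the category of discrete $G$-spectra, and one then argues that the tower converges to a discrete $G$-spectrum equivalent to $X$. The finiteness assumption on $\pi_t(X)$ does double duty here: it is what makes strong completeness of $G$ bite in the first step, and it also suppresses any $\lim^1$ or convergence pathologies in the Postnikov assembly. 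My plan is to defer the heart of this construction to Theorem \ref{mainresult} rather than to reprove it.
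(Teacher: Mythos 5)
Your two-step skeleton is the right one, and it is exactly how the paper presents this theorem: it is stated as an immediate consequence of Theorem \ref{illustrate} and Theorem \ref{mainresult}. Your first step is also correct and matches the paper: $\mathbb{Z}_p \times H$ is strongly complete, so each finite group $\pi_t(X)$ is automatically a discrete $(\mathbb{Z}_p\times H)$--module, i.e.\ $X$ is an $r$--$G$--spectrum in the sense of Definition \ref{torsion}. However, that observation is \emph{not} Theorem \ref{illustrate}; in the paper it is a separate remark made just before the statement (see Remark \ref{definition}). Theorem \ref{illustrate} says something else entirely, namely that $G=\mathbb{Z}_p\times H$ has a \emph{good filtration} (Definition \ref{goodfiltration}): the subgroups $N_m=p^m\mathbb{Z}_p\times\{e\}$ form a cofinal inverse system of open normal subgroups with trivial intersection, each $N_m\cong\mathbb{Z}_p$ is a good profinite group in Serre's sense (this is where \cite{symondsetal} enters: $H^s_c(\mathbb{Z}_p;M)\cong H^s(\mathbb{Z}_p;M)$ for finite discrete $M$), and the $N_m$ have uniformly bounded cohomological dimension. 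This good-filtration hypothesis is a required input to Theorem \ref{mainresult}, and your proposal never verifies it or even acknowledges that it is needed. That is the gap.

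The gap arises because you have overstated Theorem \ref{mainresult}. It does not say ``discrete homotopy groups imply discrete $G$--spectrum''; it requires both that $G$ have a good filtration and that each $\pi_t(X)$ be a \emph{finite} discrete $G$--module. The finiteness is consumed inside that proof exactly at the point where goodness of the $N_\alpha$ is invoked to identify $\colim_\alpha H^s(N_\alpha;\pi_t(X))$ with $\colim_\alpha H^s_c(N_\alpha;\pi_t(X))$ --- not, as you suggest, to tame $\lim^1$ issues in a Postnikov assembly. Indeed the paper's Section on extending Theorem \ref{mainresult} explains that the argument already fails for infinite torsion discrete modules, because filtered colimits do not commute with the infinite products appearing in the cochain complexes; so the statement you attribute to Theorem \ref{mainresult} is one the paper explicitly cannot prove by its method. (As a side remark, the actual mechanism is not a Postnikov tower but the coinduced cosimplicial resolution $\holim_\Delta \mathrm{Sets}(G^{\bullet+1},X_f)$ together with the colimit of its $N_\alpha$--fixed points.) To close your argument you need only add: by Theorem \ref{illustrate}, $\mathbb{Z}_p\times H$ has a good filtration $\mathcal{N}=\{p^m\mathbb{Z}_p\times\{e\}\}_{m\ge 0}$; combined with the fact that $X$ is an $r$--$G$--spectrum, Theorem \ref{mainresult} then supplies the zigzag of equivariant weak equivalences identifying $X$ with the discrete $G$--spectrum $X^{\mathrm{dis}}_{\mathcal{N}}$.
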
 

We state the conclusion of the above result more precisely: under the 
hypotheses of Theorem \ref{intromain}, there is a zigzag 
\begin{equation}\zig\label{intro-zigzag}
X \xrightarrow{\,\simeq\,} X' \xleftarrow{\,\simeq\,} X^\mathrm{dis}_\mathcal{N}
\end{equation}
of $(\mathbb{Z}_p \times H)$--spectra and 
$(\mathbb{Z}_p \times H)$--equivariant maps that are weak equivalences of 
symmetric spectra, and 
$X^\mathrm{dis}_\mathcal{N}$ is a discrete 
$(\mathbb{Z}_p \times H)$--spectrum. Thus, as in Definition \ref{def:homotopy}, it is natural to identify $X$ with 
the discrete $(\mathbb{Z}_p \times H)$--spectrum $X^\mathrm{dis}_\mathcal{N}$ 
and to define 
\[X^{h(\mathbb{Z}_p \times H)} = (X^\mathrm{dis}_\mathcal{N})^{h 
(\mathbb{Z}_p \times H)}.\] 

To go further, we need to introduce some notation and make a few 
comments. Let $\Sigma\mathrm{Sp}$ denote the model 
category of symmetric spectra (as in \cite[Theorem 3.4.4]{HSS}). We use 
\[(-)_f \: \Sigma\mathrm{Sp} \rightarrow 
\Sigma\mathrm{Sp}, \ \ \ Z \mapsto Z_f\] 
to denote a fibrant replacement functor, so that given the spectrum $Z$, 
there is a natural map $Z \rightarrow Z_f$ that is a trivial cofibration, 
with $Z_f$ 
fibrant. It is useful to note that if $X$ is a $G$--spectrum, then 
$X_f$ is also 
a $G$--spectrum and the trivial cofibration 
$X \rightarrow X_f$ is $G$--equivariant. Similarly, if $p \: X \to Y$ is a 
map of $G$--spectra (thus, $p$ is $G$--equivariant), then 
$p_f \: X_f \to Y_f$ is a map of $G$--spectra. 

We want to highlight the fact that in zigzag (\ref{intro-zigzag}), the 
construction of $X^\mathrm{dis}_\mathcal{N}$ is elementary: by Definition 
\ref{makeitdiscrete}, 
\begin{equation}\zig\label{simple}
X^\mathrm{dis}_\mathcal{N} = \colim_{m \geq 0} 
\holim_{[n] \in \Delta} 
\bigl(\mspace{1mu}\underbrace{\mathrm{Sets}(\mathbb{Z}_p \times H, \, \cdots , \mathrm{Sets}(
\mathbb{Z}_p \times H,}_{(n+1) \ \text{times}} 
X_f\mspace{-3mu}\underbrace{)\cdots\mspace{-.5mu})}_{\substack{\scriptscriptstyle{(n+1)}\\\scriptscriptstyle{\mathrm{times}}}}\mspace{.8mu}\bigr)^{\negthinspace 
(p^m\mathbb{Z}_p) \times \{e\}},\end{equation}
where each $(p^m\mathbb{Z}_p) \times \{e\}$ is an (open normal) subgroup of 
$\mathbb{Z}_p \times H$ and $p^m\mathbb{Z}_p$ has its usual meaning. We would 
like the reader to see how accessible the construction of $X^\mathrm{dis}_\mathcal{N}$ 
is, and thus, in this introduction, we do not think it is necessary to give any further explanation of 
(\ref{simple}). It turns out that for a $(\mathbb{Z}_p \times H)$--spectrum $X$ 
that is an 
$f$--spectrum, 
\begin{equation}\zig\label{simple-fixed}
X^{h(\mathbb{Z}_p \times H)} \simeq 
\Bigl(\holim_{[n] \in \Delta} 
\underbrace{\mathrm{Sets}(\mathbb{Z}_p \times H, \, \cdots , \mathrm{Sets}(
\mathbb{Z}_p \times H,}_{(n+1) \ \text{times}} X_f\mspace{-3mu}\underbrace{)\cdots\mspace{-.5mu})}_{\substack{\scriptscriptstyle{(n+1)}\\\scriptscriptstyle{\mathrm{times}}}}\mspace{.8mu}\Bigr)^{\negthinspace 
\mathbb{Z}_p \times H},\end{equation} by 
Theorem \ref{cool}. 
We are confident that without any additional explanation, the 
reader has at least an almost complete understanding of the meaning 
of the expression in (\ref{simple-fixed}); later reading about its precise 
definition (and 
that of (\ref{simple})) will mostly just confirm the 
reader's ``native conclusions."

We now explain our application of Theorem \ref{intromain} to the 
conjecture of Ausoni and Rognes. Let $p \geq 5$. Then 
\[\mathbb{Z}_p^\times \cong \mathbb{Z}_p \times 
\mathbb{Z}/(p-1),\] and as discussed earlier, $K(KU_p)$ is a $\mathbb{Z}_p^\times$--spectrum. By giving $V(1)$ the trivial $\mathbb{Z}_p^\times$--action, 
$K(KU_p) \wedge V(1)$ is a $\mathbb{Z}_p^\times$--spectrum under the diagonal action. 

Let $ku_p$ be the $p$--completed 
connective complex $K$--theory spectrum, with coefficients 
$\pi_\ast(ku_p) = \mathbb{Z}_p[u]$, 
where $|u| = 2$, as before. In \cite{localization}, Andrew Blumberg and Michael 
Mandell proved a conjecture of Rognes that there is a localization 
cofiber sequence 
\begin{equation}\zig\label{justKcofiber}
K(\mathbb{Z}_p) \rightarrow K(ku_p) \rightarrow K(KU_p) 
\rightarrow \Sigma K(\mathbb{Z}_p),\end{equation} 
and hence, there is a cofiber sequence 
\begin{equation}\zig\label{secondcofiber}
K(\mathbb{Z}_p) \wedge V(1) \rightarrow K(ku_p) \wedge V(1) 
\rightarrow K(KU_p) \wedge V(1) 
\rightarrow \Sigma (K(\mathbb{Z}_p) \wedge V(1)).\end{equation} By \cite{asterisque}, it is 
known that $K(\mathbb{Z}_p) \wedge V(1)$ is an $f$--spectrum (see 
also \cite[pages 663--664]{ausoniinventiones} for a helpful 
discussion about $V(1)_\ast K(\mathbb{Z}_p)$). 
Also, Ausoni \cite[Theorems 1.1, 8.1]{ausoniinventiones} 
showed that there exists an element $b \in V(1)_{2p+2}K(ku_p)$ 
such that if \[\mathbb{F}_p[b] \subset V(1)_\ast K(ku_p)\] denotes the polynomial $\mathbb{F}_p$--subalgebra generated by $b$, then there is a 
short exact sequence of graded $\mathbb{F}_p[b]$--modules 
\[0 \to \Sigma^{2p-3}\mathbb{F}_p \to V(1)_\ast K(ku_p) \to F \to 0,\] 
where $F$ is a free $\mathbb{F}_p[b]$--module on $4p+4$ generators. (Work of Rognes with Ausoni played a role in the Ausoni result: for example, see \cite[Section 8]{RognesNotes}. Also, \cite[Theorems 1.1, 8.1]{ausoniinventiones} were, in some sense, anticipated by \cite[discussion of Lemma 6.6]{BDR}, as explained in \cite[discussion of Proposition 1.4]{ausoniinventiones}.) 

It 
follows from the last result that 
$K(ku_p) \wedge V(1)$ is an $f$--spectrum, and hence, 
cofiber sequence (\ref{secondcofiber}) implies that $K(KU_p) \wedge V(1)$ is an 
$f$--spectrum. Therefore, by setting $H = \mathbb{Z}/{(p-1)}$ in 
Theorem \ref{intromain}, we obtain that 
$K(KU_p) \wedge V(1)$ is (in the sense of zigzag (\ref{intro-zigzag})) a discrete $\mathbb{Z}_p^\times$--spectrum. 

\begin{Rk}
Given our conclusion that $K(KU_p) \wedge V(1)$ is an 
$f$--spectrum for $p \geq 5$, it is natural to wonder if, for an arbitrary prime $p$, $K(E_n) \wedge V_n$ is an $f$--spectrum for $n \geq 2$. 
A starting point for considering this question would be a 
cofiber sequence analogous to the one in (\ref{justKcofiber}). 
For $n \geq 2$,  $E(n)_p$, the $p$--completion 
of the Johnson-Wilson spectrum $E(n)$, and $E_n$ are closely 
related, and in \cite[page 5]{acta}, Ausoni and Rognes state that they expect there to be such 
a cofiber sequence involving $K(E(n)_p)$ (for a precise description 
of this sequence, see [ibid.]). But by \cite{ABG}, such cofiber 
sequences do not exist. However, as Blumberg and Mandell discuss in 
\cite[Introduction]{BlumbergMandellMemoir}, there is a localization 
cofiber sequence 
\[K(\pi_0(E_n)) \to K(BP_n) \to K(E_n) \to \Sigma K(\pi_0(E_n)),\] where $BP_n$ is 
the connective cover of $E_n$, and we see that it has the attractive feature 
that $K(E_n)$ itself appears as a term, 
instead of $K(E(n)_p)$ (see 
\cite[Introduction]{BlumbergMandellMemoir} for more detail about 
this sequence). Thus, this cofiber sequence provides a way to 
begin studying the above question (the author has not pursued the argument 
suggested by cofiber sequences (\ref{justKcofiber}) and (\ref{secondcofiber})). 
\end{Rk}

We continue with letting $p \geq 5$. Our next step is to note that there is an equivalence
\begin{align*}
K(KU_p) \wedge v_2^{-1}V(1) & = K(KU_p) \wedge \bigl(\colim_{j \geq 0} 
\Sigma^{-jd}V(1)\bigr) \\ 
& \simeq \colim_{j \geq 0}\bigl(K(KU_p) \wedge \Sigma^{-jd}V(1)\bigr)_{\negthinspace f},
\end{align*}
where $\bigl\{\bigl(K(KU_p) \wedge \Sigma^{-jd}V(1)\bigr)_{\negthinspace f}\bigr\}_{\negthinspace 
j \geq 0}$ is a diagram of $\mathbb{Z}_p^\times$--spectra and $\mathbb{Z}_p^\times$--equivariant maps (as in the case of $V(1)$, each spectrum $\Sigma^{-jd}V(1)$ is given the trivial $\mathbb{Z}_p^\times$--action). Since $K(KU_p) \wedge V(1)$ is an $f$--spectrum, it is 
immediate that for each $j \geq 0$, $\bigl(K(KU_p) \wedge \Sigma^{-jd}V(1)\bigr)_{\negthinspace f}$ 
is an $f$--spectrum, and hence, Theorem \ref{intromain} 
implies that each $\bigl(K(KU_p) \wedge \Sigma^{-jd}V(1)\bigr)_{\negthinspace f}$ 
can be regarded as 
a discrete $\mathbb{Z}_p^\times$--spectrum. 

\begin{Rk}\label{terminology}
To aid the reader in making connections between the theory developed in this paper and the application of it that is discussed in this introduction, we use the terminology that is set up in later sections 
to express our main conclusions above 
(thus, $p \geq 5$). Let $\mathcal{N}$ denote the 
collection of open normal subgroups of $\mathbb{Z}_p^\times$ that corresponds to 
the family $\{(p^m\mathbb{Z}_p) \times \{e\}\}_{m \geq 0}$ of subgroups of 
$\mathbb{Z}_p \times \mathbb{Z}/(p-1)$. Then 
$\mathbb{Z}_p^\times$ has a good filtration (see Definition \ref{goodfiltration}), and 
we have shown that $(\mathbb{Z}_p^\times, K(KU_p) \wedge V(1), \mathcal{N})$ is a suitably finite triple (see Definition \ref{goodpair}) and 
\[\bigl(\mathbb{Z}_p^\times, \bigl\{\bigl(K(KU_p) \wedge \Sigma^{-jd}V(1)\bigr)_{\negthinspace f}\bigr\}_{\negthinspace j \geq 0}, \mathcal{N}\bigr)\] is a 
suitably filtered triple (Definition \ref{hyperdef}).
\end{Rk}

Let $\mathcal{N}$ be as defined in Remark \ref{terminology}. As explained 
(in greater generality) in the discussion centered around (\ref{zigzag}), there 
is a zigzag of 
$\mathbb{Z}_p^\times$--equivariant maps
\[\xymatrix@C=6pt@R=20pt{
{C_p : = \displaystyle{\colim_{j \geq 0}} \bigl(K(KU_p) \wedge \Sigma^{-jd}V(1)\bigr)_{\negthinspace f}}\, 
\ar@<0ex>[r]^-\simeq 
& \, {
\displaystyle
{\colim_{j \geq 0}}}\bigl(\bigl(K(KU_p) \wedge \Sigma^{-jd}V(1)\bigr)_{\negthinspace f}\bigr)' \\  
& {C_p^\mathrm{dis} := \displaystyle{\colim_{j \geq 0}}\bigl(\bigl(K(KU_p) \wedge \Sigma^{-jd}V(1)\bigr)_{\negthinspace f}\bigr)^{\negthinspace \mathrm{dis}}_{\negthinspace \mathcal{N}}} \ar@<0ex>[u]^-\simeq}
\] 
with each map a weak equivalence of symmetric spectra, and 
$C_p^\mathrm{dis}$ 
is a discrete $\mathbb{Z}_p^\times$--spectrum. The above zigzag is obtained by 
taking a colimit of the zigzags that are obtained from (\ref{intro-zigzag}) by setting 
$X$ (in (\ref{intro-zigzag})) equal to 
$\bigl(K(KU_p) \wedge \Sigma^{-jd}V(1)\bigr)_{\negthinspace f}$, for each 
$j \geq 0$. 

Let us now put 
the various equivalences above together. Following Definition 
\ref{naturalforcolimit}, we identify the $\mathbb{Z}_p^\times$--spectrum 
$C_p$ with the discrete 
$\mathbb{Z}_p^\times$--spectrum 
$C_p^\mathrm{dis}$ and we make the concomitant definition 
\[(C_p)^{h\mathbb{Z}_p^\times} = (C_p^\mathrm{dis})^{h\mathbb{Z}_p^\times}.\] 
Similarly, it is natural to identify 
the $\mathbb{Z}_p^\times$--spectrum $K(KU_p) \wedge v_2^{-1}V(1)$ with 
$C_p$, and hence, with 
the discrete $\mathbb{Z}_p^\times$--spectrum $C_p^\mathrm{dis}$ (the 
mapping telescope $v_2^{-1}V(1)$ has the trivial $\mathbb{Z}_p^\times$--action). 
Thus,
we define 
\[\bigl(K(KU_p) \wedge v_2^{-1}V(1)\bigr)^{\negthinspace h\mathbb{Z}_p^\times} 
= (C_p^\mathrm{dis})^{h\mathbb{Z}_p^\times}.\] More explicitly, we have 
\[\bigl(K(KU_p) \wedge v_2^{-1}V(1)\bigr)^{\negthinspace h\mathbb{Z}_p^\times} 
= \Bigl(\colim_{j \geq 0}\bigl(\bigl(K(KU_p) \wedge \Sigma^{-jd}V(1)\bigr)_{\negthinspace f}\bigr)^{\negthinspace \mathrm{dis}}_{\negthinspace \mathcal{N}}\Bigr)^{\negthinspace\mspace{-1mu} h\mathbb{Z}_p^\times}.\]

Now let $K$ be an arbitrary closed subgroup of $\mathbb{Z}_p^\times$. By 
the identification above of $K(KU_p) \wedge v_2^{-1}V(1)$ with 
$C_p^\mathrm{dis}$ in the world of 
$\mathbb{Z}_p^\times$--spectra and as in Definition \ref{naturalforcolimit}, 
it follows that the $K$--spectrum $K(KU_p) \wedge v_2^{-1}V(1)$ can 
be regarded as the discrete $K$--spectrum $C_p^\mathrm{dis}$, 
and hence, it is natural to define
\[\bigl(K(KU_p) \wedge v_2^{-1}V(1)\bigr)^{\negthinspace\mspace{.6mu} hK} 
= \Bigl(\colim_{j \geq 0}\bigl(\bigl(K(KU_p) \wedge \Sigma^{-jd}V(1)\bigr)_{\negthinspace f}\bigr)^{\negthinspace \mathrm{dis}}_{\negthinspace \mathcal{N}}\Bigr)^{\negthinspace\mspace{-1mu} hK}.\] Similarly (and easier; see the discussion just above (\ref{desiredequivtwo})), for 
each $j \geq 0$, it is natural to define
\[(K(KU_p) \wedge \Sigma^{-jd}V(1))^{hK} = \bigl((K(KU_p) \wedge \Sigma^{-jd}V(1))^\mathrm{dis}_\mathcal{N}\bigr)^{\negthinspace \mspace{.5mu}hK}.\]
This completes the 
construction of 
the continuous homotopy fixed point spectra that appear in 
Theorem \ref{appliedDSS}.  

\subsection{Concluding introductory remarks: our underlying framework, terminology, etc.}
In work in preparation, we use the theory developed in this paper to study 
$(KU_p)^{h\mathbb{Z}_p^\times}$, and more generally, $E_n^{hG}$, when 
$G$ is a closed subgroup of $\mathbb{G}_n$ that satisfies certain hypotheses 
(the spectra referred to here are continuous homotopy fixed point spectra, 
though $E_n$ is not a discrete $\mathbb{G}_n$--spectrum). 

We work in the framework of symmetric spectra in this paper because 
it is a symmetric monoidal category and such a category is important 
for studying the algebraic $K$--theory of commutative $S$--algebras. For 
example, in symmetric 
spectra, the role of commutative $S$--algebras is played by 
commutative symmetric ring spectra, and their properties 
are essential in the statement that $\mathbb{Z}_p^\times$ acts on $K(KU_p)$ 
by morphisms of commutative symmetric ring spectra. Furthermore, 
use of the framework of symmetric spectra makes available 
for future work the model category $\mathrm{Alg}_{A,G}$ of discrete 
commutative $G$--$A$--algebras, where $G$ is any profinite group and 
$A$ is a commutative symmetric ring spectrum (see \cite[Section 5.2]{joint}). Since 
the $\mathbb{G}_n$--action on $K(E_n)$ is by maps of commutative symmetric ring 
spectra, the model category $\mathrm{Alg}_{K(L_{K(n)}(S^0)),\mathbb{G}_n}$ (or 
$\mathrm{Alg}_{S^0,\mathbb{G}_n}$) might play a role in understanding 
$(K(E_n))^{h\mathbb{G}_n}$.   

\par 
We conclude this introduction with some preparatory comments for the upcoming work. For the rest of the paper, 
``spectrum" means symmetric spectrum of simplicial sets (except for a few instances in which the exception is clearly noted). It is useful to recall that given any collection 
$\{X_\gamma\}_{\gamma \in \Gamma}$ 
of fibrant spectra, there is an 
isomorphism $\pi_k\bigl(\prod_{\gamma \in \Gamma} 
X_\gamma\bigr) \cong 
\prod_{\gamma \in \Gamma} 
\pi_k(X_\gamma)$ of abelian groups, where $k$ is any 
integer, for the product of spectra 
$\prod_{\gamma \in \Gamma} X_\gamma$. Also, it is helpful to note 
that if a map $f$ of spectra is, when regarded as a map 
of Bousfield-Friedlander spectra, a weak equivalence (in the usual 
stable model structure on Bousfield-Friedlander spectra), then the 
map $f$ is a weak equivalence of spectra, by 
\cite[Theorem 3.1.11]{HSS}. We use $\holim$ to denote the homotopy 
limit for $\Sigma\mathrm{Sp}$, as defined in \cite[Definition 18.1.8]{hirschhorn}. 
\par
Let $G$ be any profinite group. 
A ``discrete $G$--spectrum" is a discrete 
symmetric $G$--spectrum, as defined 
in \cite[Section 2.3]{joint} (see also \cite[Section 3]{cts}); 
these objects, together with the $G$--equivariant 
maps (see \cite{joint} for the precise definition), 
constitute the category $\Sigma\mathrm{Sp}_G$ of discrete 
$G$--spectra. By \cite[Theorem 2.3.2]{joint}, 
there is a model category structure on $\Sigma\mathrm{Sp}_G$ 
in which a morphism $f$ in $\Sigma\mathrm{Sp}_G$ 
is a weak equivalence (cofibration) if and only if 
$f$ is a weak equivalence (cofibration) in $\Sigma\mathrm{Sp}$. Given 
a fibrant replacement functor \[(-)_{fG} \: \Sigma\mathrm{Sp}_G 
\rightarrow \Sigma\mathrm{Sp}_G, \ \ \ X \mapsto X_{fG}\] 
(thus, $X_{fG}$ is fibrant in $\Sigma\mathrm{Sp}_G$), such that  
there is a natural trivial cofibration $\eta \: X 
\rightarrow X_{fG}$ in $\Sigma\mathrm{Sp}_G$, there is the 
induced map \[\eta^G \: X^G \rightarrow (X_{fG})^G = X^{hG}.\] 
By \cite[Section 3.1]{joint}, the 
target of $\eta^G$, the homotopy fixed point spectrum 
$X^{hG},$ is the output of the right derived functor of fixed points.

\par
Given any profinite group $G$, a 
``$G$--spectrum" is a naive symmetric 
$G$--spectrum and not a genuine equivariant symmetric $G$--spectrum. Thus, 
when $G$ is finite, a $G$--spectrum need not be an equivariant symmetric 
$G$--spectrum in the sense of \cite{mandellequivariant} (defined by using the 
spheres $S(G) = \bigwedge_{G} S^1$ in the bonding maps). 
\vspace{.05in}
\par
\noindent
\textbf{Acknowledgements.} I thank John Rognes for helpful 
discussions. 
Also, I thank Christian Ausoni, Andrew Blumberg, Paul Goerss, Arturo 
Magidin, and Peter 
Symonds for useful comments, and Jacob Lurie for helpful conversations 
related to Remark \ref{condensed}. 
\section{Some preliminaries} 
\par
In this section, we explain some constructions and a result (Lemma 
\ref{e2term}) that will be useful for our main work later. 
As in the introduction, we let $G$ be any profinite group. 
\par
Given a set $S$, let $\mathrm{Sets}(G,S)$ be the $G$--set of all 
functions $f \: G \rightarrow S$, with $G$--action defined by 
\[(g \cdot f)(g') = f(g'g), \ \ \ g, g' \in G.\] Let $U$ be the forgetful functor from the 
category of $G$--sets to the category of sets. Then it is easy to see 
that $\mathrm{Sets}(G, -)$ is the right adjoint of $U$. By analogy with a 
standard construction in group cohomology, $\mathrm{Sets}(G,S)$ can 
be thought of as the ``coinduced $G$-set on $S$.'' 
\par
The 
construction $\mathrm{Sets}(G, S)$ prolongs to the category of 
$G$--spectra 
and the forgetful functor $U_G$ from the category of $G$--spectra to 
$\Sigma\mathrm{Sp}$ has a right adjoint that is given by 
the prolongation $\mathrm{Sets}(G, -)$, so that, given a spectrum $Z$ 
and any $k, l \geq 0$, the 
set of $l$-simplices of the pointed simplicial set $\mathrm{Sets}(G, Z)_k$ 
is defined by  
\[\mathrm{Sets}(G, Z)_{k,l} = \mathrm{Sets}(G, Z_{k,l}).\] Thus, 
for any $Z \in \Sigma\mathrm{Sp}$, there is an isomorphism 
\[\mathrm{Sets}(G, Z) \cong \textstyle{\prod_G Z}\] in $\Sigma\mathrm{Sp}$, 
where the right-hand side of the isomorphism is the product of $|G|$ copies of $Z$. Since the functors $U_G$ and 
$\mathrm{Sets}(G, - )$ are an adjoint pair, there is the associated triple 
(e.g., see \cite[8.6.2]{Weibel}), 
and, for any $G$-spectrum $X$, 
we let \[\mathrm{Sets}(G^{\bullet + 1}, X)\] denote the 
cosimplicial $G$--spectrum that is given in the usual way by the triple 
(for more detail, see \cite[8.6.4]{Weibel}).
\par
For any $m \geq 0$, we use 
$G^{m}$ to denote 
the Cartesian product of $m$ copies of $G$, with $G^0 = \ast$, the point. 
Then it is not hard to see that, for any $G$--spectrum $X$ and any 
$m \geq 0$, the ``$G$--spectrum of $m$--cosimplices" of the cosimplicial 
$G$--spectrum $\mathrm{Sets}(G^{\bullet+1}, X)$ satisfies the 
$G$--equivariant isomorphism
\[\mathrm{Sets}(G^{\bullet+1}, X)^m \cong \mathrm{Sets}(G, 
\mathrm{Sets}(G^{m}, X)),\] where, as before, $\mathrm{Sets}(G^{m}, X)$ 
is the spectrum defined on the level of sets by 
$\mathrm{Sets}(G^{m}, X)_{k,l} = \mathrm{Sets}(G^{m}, X_{k,l}),$ for 
every $k, l \geq 0$.
\par
We make no claim of originality for Lemma \ref{e2term} below; 
for example, it is a variation on the fact that if 
$L$ is a discrete group, $Z$ an $L$--spectrum that is fibrant 
in $\Sigma\mathrm{Sp}$, and $P$ 
a subgroup of $L$, then the descent spectral sequence 
\[E_2^{s,t} \Rightarrow 
\pi_{t-s}\bigl(\mathrm{Map}_P(EL_+, Z)\bigr) \cong 
\pi_{t-s}\bigl(Z^{\widetilde{h}P}\bigr)\] has an $E_2$--term that satisfies
\[E_2^{s,t} = H^s(P; \pi_t(Z)),\]
the (non-continuous) 
group cohomology of $P$ with coefficients in the $P$--module $\pi_t(Z)$. 
Also, the result below is a ``discrete version" of \cite[page 210 and the 
proof of Lemma 
5.4]{hGal} and \cite[proof of Lemma 7.12]{cts}. But, since Lemma \ref{e2term} 
is a useful tool for our work later, we give a complete proof.
\begin{Lem}\label{e2term}
Let $G$ be a profinite group. If $X$ is a $G$--spectrum and $K$ is a 
subgroup of $G$, then, for every $s \geq 0$ and any $t \in \mathbb{Z}$, there is an isomorphism
\[ \lims_\Delta \pi_t\bigl(\mathrm{Sets}(G^{\bullet+1}, X_f)^K\bigr) 
\cong H^s(K; \pi_t(X)).\] 
\end{Lem}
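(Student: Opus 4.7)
The plan is to compute $\pi_t$ level by level in the cosimplicial spectrum, identify the result with the standard cobar cosimplicial $G$-module on $M := \pi_t(X)$, and then invoke the well-known computation of $H^s(K;M)$ from that resolution. First, from the $G$-equivariant isomorphism $\mathrm{Sets}(G^{\bullet+1}, X_f)^m \cong \mathrm{Sets}(G, \mathrm{Sets}(G^m, X_f))$ recalled just above the lemma, $G$ acts on $\mathrm{Sets}(G^{m+1}, X_f)$ by right translation on the outermost $G$-factor only. Combined with the underlying isomorphism $\mathrm{Sets}(G^{m+1}, X_f) \cong \prod_{G^{m+1}} X_f$ in $\Sigma\mathrm{Sp}$, this gives
\[\mathrm{Sets}(G^{m+1}, X_f)^K \,\cong\, \textstyle{\prod_{(G/K) \times G^m}} X_f,\]
which is fibrant as a product of fibrants. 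Since $\pi_t$ commutes with such products (as recalled in the introduction) and $X_f \simeq X$, one obtains a natural isomorphism
\[\pi_t(\mathrm{Sets}(G^{m+1}, X_f)^K) \,\cong\, \mathrm{Sets}(G^{m+1}, \pi_t(X))^K,\]
where the right-hand side is the $K$-fixed points of the cobar cosimplicial $G$-module $\mathrm{Sets}(G^{\bullet+1}, M)$ produced by the adjunction $U_G \dashv \mathrm{Sets}(G, -)$ on $G$-modules.

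Since $\lims_\Delta$ of a cosimplicial abelian group is the $s$-th cohomology of its normalized cochain complex, the claim then reduces to the purely algebraic statement
\[H^s(\mathrm{Sets}(G^{\bullet+1}, M)^K) \,\cong\, H^s(K;M).\]
This follows from two standard facts. The augmented cosimplicial $G$-module $M \to \mathrm{Sets}(G^{\bullet+1}, M)$, with augmentation the unit $m \mapsto (g \mapsto gm)$, is simplicially contractible after forgetting the $G$-action (the extra codegeneracy is part of the monad data), so its augmented cochain complex is a $G$-equivariant resolution of $M$. Each term $\mathrm{Sets}(G^{m+1}, M)$ is $K$-acyclic: a set-theoretic section of $G \twoheadrightarrow G/K$ produces a $K$-equivariant isomorphism
\[\mathrm{Sets}(G, N) \,\cong\, \mathrm{Sets}(K, \mathrm{Sets}(G/K, N))\]
of $K$-modules for any abelian group $N$, exhibiting the left side as coinduced from the trivial subgroup and hence $K$-acyclic by Shapiro's lemma; iterating with $N = \mathrm{Sets}(G^m, M)$ treats all cosimplicial levels. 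Since $K$-acyclic resolutions compute $R^s(-)^K = H^s(K; -)$, the desired isomorphism drops out.

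The main obstacle is the naturality bookkeeping in the first step: one has to verify that the cosimplicial structure maps on $\pi_t(\mathrm{Sets}(G^{\bullet+1}, X_f)^K)$, induced by the unit and counit of the adjunction $U_G \dashv \mathrm{Sets}(G, -)$ on $G$-spectra, are carried to the corresponding cobar coface and codegeneracy maps on $\mathrm{Sets}(G^{\bullet+1}, M)^K$ coming from the analogous adjunction on $G$-modules applied to $M = \pi_t(X)$. Once that compatibility is pinned down, the homological algebra in the previous paragraph is formal.
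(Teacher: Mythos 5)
Your proposal is correct and follows essentially the same route as the paper's proof: levelwise identification of $\pi_t$ of the cosimplicial spectrum with the $K$-fixed points of the cobar complex on $\pi_t(X)$, exactness of the augmented complex via the extra degeneracy coming from evaluation at the identity, and $K$-acyclicity of each term by exhibiting it as coinduced (from the trivial subgroup) and applying Shapiro's Lemma. The only cosmetic difference is that the paper writes the coinduced module as $\mathrm{Hom}_{\mathbb{Z}}\bigl(\bigoplus_K \mathbb{Z}, \prod_{G/K}(-)\bigr)$ where you write $\mathrm{Sets}(K, \mathrm{Sets}(G/K, -))$; these are the same identification.
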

\begin{Rk}
To avoid any confusion, we note that in the statement of Lemma 
\ref{e2term}, $K$ is any subgroup of $G$ (thus, for example, 
$K$ does not have to be a closed subgroup of $G$).
\end{Rk}
\begin{proof}[{Proof of Lemma \ref{e2term}.}] 
If $A$ is an abelian group and $P$ is a profinite group, let 
$\mathrm{Sets}(P,A)$ be the abelian group of functions $P \rightarrow A$: 
in fact, $\mathrm{Sets}(P, A)$ is a $P$--module, with its $P$--action defined 
by $(p \cdot f)(p') = f(p'p)$. Then there is an isomorphism
\[ \lims_\Delta \pi_t\bigl(\mathrm{Sets}(G^{\bullet+1}, X_f)^K\bigr) \cong 
H^s\Bigl[\mathrm{Sets}(G^{\ast+1}, \pi_t(X))^K\Bigr],\]
where $\mathrm{Sets}(G^{\ast+1}, \pi_t(X))^K$ is the 
cochain complex obtained by applying, for each $m \geq 0$, the chain 
of isomorphisms
\begin{align*}
\pi_t\Bigl(\bigl(\mathrm{Sets}(G^{\bullet+1}, X_f)^K\bigr)^m\Bigr) & \cong 
\pi_t\bigl(\mathrm{Sets}(G, \mathrm{Sets}(G^{m}, X_f))^K\bigr)\\
& \cong \pi_t\Bigl(\textstyle{\prod}_{_{G/K}} \textstyle{\prod}_{_{G^m}} 
X_f\Bigr) \\ 
& \cong \textstyle{\prod}_{_{G/K}} \textstyle{\prod}_{_{G^m}} \pi_t(X_f) \\ 
& \cong \mathrm{Sets}(G, \mathrm{Sets}(G^m, \pi_t(X))^K \\
& \cong \mathrm{Sets}(G^{m+1}, \pi_t(X))^K.
\end{align*} Above, for $m \geq 1$, $\mathrm{Sets}(G^{m}, \pi_t(X))$ is the $K$--module 
of functions $G^m \rightarrow \pi_t(X)$ whose $K$--action is given by 
\[(k \cdot p)(g_1, g_2, g_3, ..., g_m) = p({g_1}k, g_2, g_3, ..., g_m),\]
for $k  \in  K,$ 
$p  \in  \mathrm{Sets}(G^{m}, \pi_t(X)),$ 
and $g_1, g_2, ..., g_m 
 \in  G.$ (In the preceding sentence, since $m \geq 1$, 
 it goes without saying that this sentence 
 also defines the $K$--action on the 
 $K$--module $\mathrm{Sets}(G^{m+1}, 
 \pi_t(X))$ that appears 
 in the last expression in the above chain of isomorphisms.)
\par
Notice that there is a $G$--equivariant monomorphism
\[\pi_t(X) \overset{\eta}{\longrightarrow} 
\mathrm{Sets}(G, \pi_t(X)), \ \ \ [f] \mapsto 
\bigl(g \mapsto g \cdot [f]\bigr)\] and a 
homomorphism \[\mathrm{ev}_{\negthinspace_1} \: \mathrm{Sets}(G, \pi_t(X)) 
\rightarrow \pi_t(X), \ \ \ p \mapsto p(1),\] such that 
$\mathrm{ev}_{\negthinspace_1} 
\negthinspace \circ \negthinspace 
\eta = \mathrm{id}_{\pi_t(X)}$. Then, since the 
cochain complex $\mathrm{Sets}(G^{\ast+1}, \pi_t(X))$ originally 
comes from a triple, there is 
an exact sequence 
\begin{equation}\label{longes}\zig
0 \rightarrow \pi_t(X) \overset{\eta}{\longrightarrow} 
\mathrm{Sets}(G^{\ast+1}, \pi_t(X))\end{equation} of $K$--modules
(for example, see the dual of 
\cite[Corollary 8.6.9]{Weibel}). 
\par
There is a chain 
\begin{align*}
\mathrm{Sets}(G, \mathrm{Sets}(G^m, \pi_t(X)) & 
\cong \textstyle{\prod_{_K} \prod_{_{G/K}} 
\negthinspace\mathrm{Sets}(G^m, \pi_t(X))} 
\\ &
\cong \mathrm{Hom}_{_{\scriptstyle{\mathbb{Z}}}}\negthinspace\bigl(\textstyle{\bigoplus_{\negthinspace_K} 
\negthinspace\mathbb{Z}}, 
\prod_{_{G/K}} \negthinspace\mathrm{Sets}(G^m, \pi_t(X))\bigr)\end{align*} 
of isomorphisms 
of $K$--modules, where  
$\mathrm{Hom}_{_{\scriptstyle{\mathbb{Z}}}}
\negthinspace\bigl(\textstyle{\bigoplus_{\negthinspace_K} \negthinspace\mathbb{Z}}, 
\prod_{_{G/K}} 
\negthinspace\mathrm{Sets}(G^m, \pi_t(X))\bigr)$ is a coinduced 
$K$--module, and hence, Shapiro's Lemma implies that
\begin{align*}
H^s\bigl(K; & 
\, \mathrm{Sets}(G, \mathrm{Sets}(G^m, \pi_t(X)))\bigr) 
\\ & \cong 
H^s\bigl(K; \mathrm{Hom}_{_{\scriptstyle{\mathbb{Z}}}}\negthinspace\bigl(\textstyle{\bigoplus_{\negthinspace_K} 
\negthinspace\mathbb{Z}}, 
\prod_{_{G/K}} \negthinspace\mathrm{Sets}(G^m, \pi_t(X))\bigr)\bigr) \\ 
& = 0, \end{align*}
whenever $s >0,$ 
for all $m \geq 0$. 
\par
Our last conclusion above implies that exact sequence (\ref{longes}) is a 
resolution of the $K$--module 
$\pi_t(X)$ by $(-)^K$--acyclic $K$--modules, and therefore, 
\[H^s\Bigl[\mathrm{Sets}(G^{\ast+1}, \pi_t(X))^K\Bigr] \cong 
H^s(K; \pi_t(X)),\] as desired.
\end{proof}
\section{Profinite groups that have a good filtration}
\par
As usual, let $G$ be a profinite group. In this section, after explaining 
the notion of a good filtration for $G$ and making several comments 
about it, we show that $\mathbb{Z}_p \times H$, where $p$ is any prime 
and $H$ is a finite discrete group, has a good filtration.

\begin{Def}\label{good}
Given a discrete 
$G$--module $M$, let 
\[\lambda^s_M \: H^s_c(G; M) \rightarrow H^s(G; M)\] be the natural 
homomorphism between continuous cohomology and 
non-continuous cohomology that is obtained by regarding 
each group $\mathrm{Map}_c(G^m, M)$ 
of continuous cochains as a subgroup of the corresponding 
group 
$\mathrm{Sets}(G^m, M)$ of all cochains. Then, in this paper (see 
Remark \ref{definition} below), 
we say that $G$ is {\it good} if $\lambda^s_M$ is 
an isomorphism for all $s \geq 0$ and 
every finite discrete $G$--module $M$.
\end{Def}
\begin{Rk}\label{definition}
The above definition is taken from 
\cite[\negthinspace 
page 13,~Exercise 2]{FrSerre}: if 
$G$ is strongly complete, so that $G \cong \widehat{G}$, where 
$\widehat{G}$ is the profinite completion of $G$, and 
$\lambda^s_M$ is an isomorphism for all $s \geq 0$ and every 
finite $G$--module $M$ (a finite $G$--module consists of finite orbits, so 
that every stabilizer subgroup of $G$ has finite index, and hence, is an 
open subgroup (since 
$G$ is strongly complete), so that a finite $G$--module is automatically 
a discrete 
$G$--module), then, following Serre, $G$ is ``bon." In 
general, since $G$ and $\widehat{G}$ need not be the same, our 
definition of ``good" is different from the usual one (that is, the 
aforementioned ``bon") in 
group theory. However, our use of ``good" in this paper should 
cause no confusion, since, throughout this paper, we only 
use ``good" in the sense of Definition \ref{good}.
\end{Rk} 
\par
We say that $G$ has 
{\em finite cohomological dimension} (``finite c.d.") if there exists some 
positive integer 
$r$ such that the continuous cohomology $H^s_c(G; M) = 0,$ for all discrete $G$--modules 
$M$, whenever $s > r$.
\begin{Def}\label{goodfiltration}
A profinite group $G$ {\it has a 
good filtration} if 
\begin{enumerate}
\item[(a)]
there exists a directed poset 
$\Lambda$ such that there is an inverse system 
\[\mathcal{N} = \{N_\alpha\}_{\alpha \in \Lambda}\] of 
open normal subgroups of $G$, with the maps in the diagram 
given by the inclusions (that is, $\alpha_1 \leq \alpha_2$ in $\Lambda$ 
if and only if $N_{{\alpha_2}}$ is a subgroup of $N_{{\alpha_1}}$);
\item[(b)]
the intersection $\bigcap_{\alpha \in \Lambda} \negthinspace N_\alpha$ 
is the trivial group $\{e\}$;
\item[(c)]
each $N_\alpha$ is a good profinite group, in the sense 
of Definition \ref{good}; and 
\item[(d)]
the collection $\{N_\alpha\}_{\alpha \in \Lambda}$ has 
uniformly bounded finite c.d.; that is, there exists a fixed natural number 
$r_{\negthinspace_G}$, such that 
$H^s_c(N_\alpha; M) = 0$, for all $s > r_{\negthinspace_G}$, whenever 
$\alpha \in \Lambda$ and $M$ is any discrete $N_\alpha$--module.
\end{enumerate}
\end{Def}
\begin{Rk}\label{yieldsgood} 
Let $G$ be a profinite group with a good filtration and let 
$\mathcal{N} = \{N_\alpha\}_{\alpha 
\in \Lambda}$ satisfy (a)--(d) in Definition \ref{goodfiltration}. 
It follows from (a) and (b) that $\mathcal{N}$ is 
a cofinal subcollection of the family of all open normal subgroups of $G$, and 
hence, the canonical homomorphism 
$G \rightarrow \lim_{\alpha \in \Lambda} G/N_\alpha$ is a 
homeomorphism. 
Now choose any $\alpha \in \Lambda$, so that 
$N_\alpha$ is good, by (c) above. We give an argument that is suggested by 
\cite[page 14, Exercise 2,~(c)]{FrSerre} (for instances of Serre's argument that are 
closely related to the version given here, see 
\cite[proof of Theorem 2.10]{symondsetal} and \cite[proof of Proposition 3.1]{Schroer}). 
Since \[\lambda^\ast_M \: H^\ast_c(N_\alpha;M) \to H^\ast(N_\alpha;M)\] is 
an isomorphism in each degree for any finite discrete $G$--module $M$, the 
$E_2$--term of the Lyndon-Hochschild-Serre spectral sequence 
\[E_2^{p,q} = 
H^p(G/N_\alpha; H^q_c(N_\alpha; M)) \Longrightarrow H^{p+q}_c(G; M)\] for 
continuous group cohomology (since $G/N_\alpha$ is a finite discrete group, the $E_2$--term 
is given by just group cohomology) is 
isomorphic to the $E_2$--term of the corresponding 
Lyndon-Hochschild-Serre spectral 
sequence 
\[H^p(G/N_\alpha; H^q(N_\alpha; M)) \Longrightarrow H^{p+q}(G; M)\] 
for group cohomology, and hence, by comparison of spectral sequences, the 
map \[\lambda^s_M \: H^s_c(G; M) \xrightarrow{\cong} H^s(G; M)\] is an isomorphism, for all $s \geq 0$ and 
any finite discrete $G$--module $M$. 
\end{Rk}
\begin{Rk}
Let $G$ be a profinite group that has finite c.d. and let $\{N_\alpha\}_{\alpha \in \Lambda}$ be an inverse system of open normal subgroups of $G$ that 
satisfies (a)--(c) in Definition \ref{goodfiltration}. Then the inverse system 
also satisfies (d), so that $G$ has a good filtration. This conclusion follows 
from the fact that for $r$ as in 
our definition of finite c.d. above (just before Definition \ref{goodfiltration}), 
Shapiro's Lemma implies that whenever $s > r$, given any $\alpha \in 
\Lambda$, 
\[H^s_c(N_\alpha; M) \cong H^s_c(G; 
\mathrm{Coind}^G_{N_\alpha}\negthinspace(M)) 
= 0,\] for all discrete 
$N_\alpha$--modules $M$ (above, $\mathrm{Coind}^G_{N_\alpha}\negthinspace(M)$ is the coinduced module of continuous functions $G \to M$ that are 
$N_\alpha$--equivariant). 
\end{Rk}

\begin{Thm}\label{illustrate}
Let $p$ be any prime and let $G = \mathbb{Z}_p \times H$, where $H$ is a finite discrete group and $G$ is equipped with the product topology. Then 
$G$ has a good filtration.
\end{Thm}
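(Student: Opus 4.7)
The plan is to write down an explicit filtration of $G = \mathbb{Z}_p \times H$ and verify conditions (a)--(d) of Definition \ref{goodfiltration} one at a time. I would set $\mathcal{N} = \{N_m\}_{m \geq 0}$ with $N_m = (p^m\mathbb{Z}_p) \times \{e\}$, indexed by $\Lambda = \mathbb{Z}_{\geq 0}$ in its usual order (so $m_1 \leq m_2$ precisely when $N_{m_2} \subseteq N_{m_1}$). Conditions (a) and (b) are then immediate: $p^m\mathbb{Z}_p$ is open of finite index $p^m$ in $\mathbb{Z}_p$ and $\{e\}$ is open in the finite discrete group $H$, so each $N_m$ is open in $G$; normality follows from the product decomposition, because $p^m\mathbb{Z}_p$ is normal in the abelian group $\mathbb{Z}_p$ and $H$ centralizes the $\mathbb{Z}_p$-factor; the family is clearly directed under reverse inclusion; and $\bigcap_{m \geq 0} N_m = \{0\} \times \{e\}$ because $\bigcap_m p^m\mathbb{Z}_p = \{0\}$.

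For condition (d), each $N_m$ is topologically isomorphic to $\mathbb{Z}_p$, whose continuous cohomological dimension (on discrete coefficient modules) is uniformly bounded. Taking $r_G$ equal to this bound is therefore enough to give the uniformly bounded finite c.d.\ condition for the whole family $\{N_m\}_{m \geq 0}$ at once.

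The main obstacle is condition (c): showing that $\mathbb{Z}_p$ itself is good in the sense of Definition \ref{good}. Since $\mathbb{Z}_p$ is strongly complete, Remark \ref{definition} identifies finite $\mathbb{Z}_p$-modules with finite discrete $\mathbb{Z}_p$-modules, and the action on any such $M$ factors through a finite quotient $\mathbb{Z}_p/p^N\mathbb{Z}_p$. One wants the comparison $\lambda^s_M \colon H^s_c(\mathbb{Z}_p; M) \to H^s(\mathbb{Z}_p; M)$ to be an isomorphism for every such $M$ and every $s \geq 0$. The cases $s = 0$ and $s = 1$ are elementary, since any abstract crossed homomorphism $\mathbb{Z}_p \to M$ has finite image and hence a finite-index kernel, which is open by strong completeness, so is automatically continuous; a parallel observation handles coboundaries. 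The substantive content is the case $s \geq 2$, which amounts to the vanishing of the non-continuous group cohomology $H^s(\mathbb{Z}_p; M)$ for all finite $M$, and this is the place where the argument invokes \cite{symondsetal} in a key way.

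Once (c) is established, the four conditions together assemble into the good filtration required by Definition \ref{goodfiltration}, completing the proof. The only step that needs anything beyond direct verification and strong completeness of $\mathbb{Z}_p$ is the use of \cite{symondsetal} to handle higher-degree abstract cohomology of $\mathbb{Z}_p$ with finite coefficients.
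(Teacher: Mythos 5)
Your proposal is correct and follows essentially the same route as the paper: the same filtration $N_m = (p^m\mathbb{Z}_p)\times\{e\}$, direct verification of (a) and (b), the citation of \cite{symondsetal} for the goodness of $\mathbb{Z}_p$ in (c), and the bound $\mathrm{cd}(\mathbb{Z}_p)=1$ transported along $N_m\cong\mathbb{Z}_p$ for (d). The extra detail you supply on the $s=0,1$ cases of the comparison map is fine but not needed beyond what the cited theorem already provides.
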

\begin{proof}
Recall that there is a descending chain 
\[\mathbb{Z}_p = U_0 \gneq U_1 \gneq \cdots \gneq U_m \gneq 
\cdots\] 
of open normal subgroups of $\mathbb{Z}_p$, with 
$U_m = p^m\mathbb{Z}_p$ for each $m \geq 0$ 
and $\bigcap_{m \geq 0} U_m = \{e\}$. For each $m \geq 0$, we set 
$N_m = U_m \times \{e\}$, a subgroup of $G$. 
We will show that $\{N_m\}_{m \geq 0}$ 
satisfies conditions (a)--(d) in Definition \ref{goodfiltration}.

It is easy to see that $\{N_m\}_{m \geq 0}$ satisfies (a) and (b). 
By \cite[Theorem 2.9]{symondsetal}, $\mathbb{Z}_p$ is a good 
profinite group and, for each $m \geq 0$, $N_m \cong \mathbb{Z}_p$, showing 
that (c) is valid. 
Finally, since the pro-$p$-group $\mathbb{Z}_p$ has cohomological 
$p$-dimension equal to one, it follows that $\mathbb{Z}_p$ has finite 
c.d. This fact, coupled with another application of the isomorphisms 
$N_m \cong \mathbb{Z}_p$ for all $m \geq 0$, shows that 
(d) holds. 
\end{proof}

\section{An $r$--$\mathbb{Z}_p$--spectrum is a 
discrete $\mathbb{Z}_p$--spectrum}\label{mainsection}
\par 
In this section, 
we prove one of the key results of this paper, Theorem \ref{mainresult}; 
the title above illustrates a special case of this result, and the unfamiliar 
term in the title is defined below.

\begin{Def}\label{torsion}
Let $G$ be a profinite group and $X$ a $G$--spectrum. 
If $\pi_t(X)$ is a 
finite {\it discrete} $G$--module for every $t \in \mathbb{Z}$, then we say that 
$X$ is an {\it $r$--$G$--spectrum} (in this term, the ``r" is for 
``restricted," which is, roughly speaking, a synonym of ``finite").
\end{Def} 

\begin{Rk}\label{rgfgrelation}
Since an $r$--$G$--spectrum is both a $G$--spectrum and an $f$--spectrum, 
our first thought was to use the term ``$f$--$G$--spectrum" for such an object, 
but this term is already used (often) by \cite{davisquick} (see [ibid., Definition 3.1]). 
If $G$ is strongly complete, then 
every $r$--$G$--spectrum $X$ has an $f$--$G$--spectrum associated to it in the following way: $X_f$ is a $G$--spectrum and since it is a fibrant spectrum, 
for each integer $t$, there is an isomorphism 
\begin{equation}\label{finiteabelian}\zig
\pi_t(X_f) \cong \colim_{k} \pi_{t+k}(X_k) = \pi_t(U(X_f))\end{equation} 
of finite 
abelian groups, where the 
last expression in (\ref{finiteabelian}) 
refers to the $t$-th (classical) stable homotopy group of the 
Bousfield-Friedlander spectrum $U(X_f)$ that underlies $X_f$, and 
hence, by an application of \cite[Theorem 5.15]{quickspectra}, there is a $G$--equivariant map and weak equivalence 
$U(X_f) \xrightarrow{\,\simeq\,} F^s_G(U(X_f))$ of 
Bousfield-Friedlander spectra, with $F^s_G(U(X_f))$ 
an $f$--$G$--spectrum.
\end{Rk}

For the remainder of this section (with the exception of Lemma 
\ref{lemma}), $G$ denotes a profinite group that has a good 
filtration. 
Thus, we let \[\mathcal{N} = \{N_\alpha\}_{\alpha \in \Lambda}\] 
be an inverse system of open normal subgroups of $G$ that 
satisfies the requirements of Definition \ref{goodfiltration}.
\begin{Def}\label{makeitdiscrete} Let $X$ be a $G$--spectrum. We set 
\[X^\mathrm{dis}_\mathcal{N} = \colim_{\alpha \in \Lambda} \holim_\Delta 
\mathrm{Sets}(G^{\bullet + 1}, X_f)^{N_\alpha},\] where the colimit 
is formed in $\Sigma\mathrm{Sp}$. %
\end{Def} 

Since each $N_\alpha$ is an open normal subgroup of $G$, with 
$G/N_\alpha$ 
a finite discrete group, $\mathrm{Sets}(G^{\bullet + 1}, X_f)^{N_\alpha}$ 
is a 
cosimplicial $G/N_\alpha$--spectrum. Thus, 
the spectrum 
$\textstyle{\holim_\Delta \mathrm{Sets}(G^{\bullet + 1}, X_f)^{N_\alpha}}$ is a 
$G/N_\alpha$--spectrum, and hence, a discrete 
$G$--spectrum (via the canonical projection $G \rightarrow G/N_\alpha)$. 
By \cite[Section 3.4]{joint}, 
colimits in $\Sigma\mathrm{Sp}_G$ are formed in $\Sigma\mathrm{Sp}$, 
and hence, we have the following observation.
\begin{Lem}\label{thisguy}
If $X$ is a $G$--spectrum, where $G$ is a profinite group that has a 
good filtration, then $X^\mathrm{dis}_\mathcal{N}$ is a discrete $G$--spectrum.
\end{Lem}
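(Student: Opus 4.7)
The plan is essentially to unwind the definition of $X^\mathrm{dis}_\mathcal{N}$ and observe that each of the three operations used to build it -- forming fixed points by an open normal subgroup, taking a cosimplicial homotopy limit, and taking a filtered colimit -- preserves the property of being a discrete $G$-spectrum. Most of what is needed has already been sketched in the paragraph immediately preceding the lemma, so my task is to assemble the pieces and justify why each step stays inside $\Sigma\mathrm{Sp}_G$.

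First I would fix $\alpha \in \Lambda$ and recall from Section 2 that $\mathrm{Sets}(G^{\bullet+1}, X_f)$ is a cosimplicial $G$-spectrum. Since $N_\alpha$ is normal in $G$, the cosimplicial subspectrum $\mathrm{Sets}(G^{\bullet+1}, X_f)^{N_\alpha}$ is stable under the $G$-action, and the $N_\alpha$-action on it is trivial, so it is a cosimplicial $G/N_\alpha$-spectrum. Since $G/N_\alpha$ is finite and discrete, the pointed simplicial sets underlying this object at each cosimplicial level have stabilizers (as $G$-sets) containing $N_\alpha$; in particular every element has open stabilizer in $G$, so the levelwise $G$-set structure is discrete. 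Thus the $G/N_\alpha$-spectrum, when pulled back along the quotient $G \twoheadrightarrow G/N_\alpha$, is a discrete $G$-spectrum.

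Next, the homotopy limit $\holim_\Delta \mathrm{Sets}(G^{\bullet+1}, X_f)^{N_\alpha}$, being constructed from the cosimplicial $G/N_\alpha$-spectrum by limits and cotensors with simplicial sets, inherits a $G/N_\alpha$-action, and therefore a discrete $G$-action by the same argument. For $N_\beta \subseteq N_\alpha$, the inclusion of fixed point subspectra is $G$-equivariant and commutes with the cosimplicial structure maps, so the transition maps in the diagram indexed by $\Lambda$ are $G$-equivariant and the diagram lives in $\Sigma\mathrm{Sp}_G$. Finally, I would invoke \cite[Section 3.4]{joint} to conclude that colimits in $\Sigma\mathrm{Sp}_G$ are created by the forgetful functor to $\Sigma\mathrm{Sp}$, and hence the filtered colimit
\[X^\mathrm{dis}_\mathcal{N} = \colim_{\alpha \in \Lambda} \holim_\Delta \mathrm{Sets}(G^{\bullet+1}, X_f)^{N_\alpha}\]
is a discrete $G$-spectrum.

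I do not foresee any real obstacle: the argument is entirely formal given the preceding discussion, and the only point requiring mild care is verifying that each fixed-point subspectrum is $G$-stable (which uses the normality of $N_\alpha$) and that the structure maps of the $\Lambda$-indexed diagram are $G$-equivariant. The good-filtration hypotheses (c) and (d) of Definition \ref{goodfiltration} play no role here; they will be needed only in later results, and this lemma uses just parts (a) and (the normality in) (a) of the filtration data.
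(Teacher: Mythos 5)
Your proposal is correct and follows essentially the same route as the paper: the paper's own justification (given in the paragraph preceding the lemma) observes that each $\mathrm{Sets}(G^{\bullet+1},X_f)^{N_\alpha}$ is a cosimplicial $G/N_\alpha$--spectrum, so its homotopy limit is a $G/N_\alpha$--spectrum and hence a discrete $G$--spectrum via $G \to G/N_\alpha$, and then invokes the fact that colimits in $\Sigma\mathrm{Sp}_G$ are formed in $\Sigma\mathrm{Sp}$. Your added care about $G$-stability of the fixed-point subspectra and $G$-equivariance of the transition maps, and your observation that only condition (a) of Definition \ref{goodfiltration} is used, are both accurate (the latter is confirmed explicitly by the paper at the start of Section 5).
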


\begin{Rk}\label{neededlater}
Let $X$ be a $G$--spectrum. 
Since $\mathcal{N}$ is cofinal in the collection of all open normal subgroups 
of $G$, there is an isomorphism 
\[X^\mathrm{dis}_\mathcal{N} \cong \colim_{N \vartriangleleft_o G} 
\holim_\Delta \mathrm{Sets}(G^{\bullet + 1}, X_f)^N\] 
of discrete $G$--spectra, where above, 
$N \vartriangleleft_o G$ means that $N$ is an open normal subgroup 
of $G$. Similarly, if 
$\mathcal{N}' = \{N_{\alpha'}\}_{\alpha' \in \Lambda'}$ is another inverse 
system of open normal subgroups of $G$ that satisfies 
Definition \ref{goodfiltration}, there 
is an isomorphism 
\[X^\mathrm{dis}_{\mathcal{N}'} = 
 \colim_{\alpha' \in \Lambda'} 
\holim_\Delta \mathrm{Sets}(G^{\bullet + 1}, X_f)^{N_{\alpha'}} 
\cong \colim_{N \vartriangleleft_o G} 
\holim_\Delta \mathrm{Sets}(G^{\bullet + 1}, X_f)^N\] in 
$\Sigma\mathrm{Sp}_G$, and hence, 
there is an isomorphism 
$X^\mathrm{dis}_\mathcal{N} \cong X^\mathrm{dis}_{\mathcal{N}'}$ 
in $\Sigma\mathrm{Sp}_G$. It follows that the 
definition of $X^\mathrm{dis}_\mathcal{N}$ 
is independent of the choice of inverse system 
$\mathcal{N}$ up to isomorphism.
\end{Rk}

\par
Now we are ready to prove the central result of this paper: its 
conclusion can be abbreviated by saying that if $X$ is an 
$r$--$G$--spectrum (as in Definition \ref{torsion}), 
then $X$ is a discrete $G$--spectrum. 
We break up our work for this result into two pieces. 
The first piece, Lemma \ref{lemma} below, can be regarded as a 
special case of \cite[Proposition 6.4]{neisen}, in the setting 
of $G$--spectra.
\begin{Lem}\label{lemma}
If $G$ is any profinite group and $X$ is any $G$--spectrum, then 
there is a $G$--equivariant map
\[i_X \: X \overset{\simeq}{\longrightarrow} 
\holim_\Delta \mathrm{Sets}(G^{\bullet + 1}, X_f)\] that is a 
weak equivalence in $\Sigma\mathrm{Sp}$.
\end{Lem}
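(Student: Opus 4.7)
The plan is to construct $i_X$ as the composite
\[X \xrightarrow{\,\iota\,} X_f \xrightarrow{\,\widetilde{\eta}\,} \holim_\Delta \mathrm{Sets}(G^{\bullet+1}, X_f),\]
where $\iota \: X \to X_f$ is the $G$-equivariant trivial cofibration furnished by the fibrant replacement functor, and $\widetilde{\eta}$ is the map induced by the coaugmentation $X_f \to \mathrm{Sets}(G^{\bullet+1}, X_f)$ whose level $0$ component is the unit $\eta_{X_f} \: X_f \to \mathrm{Sets}(G, X_f)$, $x \mapsto (g \mapsto g \cdot x)$, of the adjunction $U_G \dashv \mathrm{Sets}(G, -)$. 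Since $\eta_{X_f}$ is $G$-equivariant and the entire cosimplicial structure comes from this adjunction in a natural (hence $G$-equivariant) way, $\widetilde{\eta}$ is a $G$-equivariant map of $G$-spectra, and so is $i_X$. Because $\iota$ is a weak equivalence by construction, the substance of the lemma is to show that $\widetilde{\eta}$ is a weak equivalence in $\Sigma\mathrm{Sp}$.

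To establish this, I would forget the $G$-action and invoke the standard extra-codegeneracy argument for monadic cobar constructions. Forgetting the $G$-action, the augmented cosimplicial spectrum $X_f \to \mathrm{Sets}(G^{\bullet+1}, X_f)$ admits an extra codegeneracy given at each level $m$ by evaluation at $e \in G$ in the first coordinate,
\[s^{-1} \: \mathrm{Sets}(G^{m+1}, X_f) \to \mathrm{Sets}(G^{m}, X_f), \quad f \mapsto \bigl((g_1, \ldots, g_m) \mapsto f(e, g_1, \ldots, g_m)\bigr),\]
induced by the counit $\mathrm{Sets}(G, -) \to \mathrm{id}$ of the adjunction; note that this extra codegeneracy is not $G$-equivariant, which is why the argument has to be made on the underlying spectra. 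Moreover, since $X_f$ is fibrant and each $\mathrm{Sets}(G^{m+1}, X_f) \cong \prod_{G^{m+1}} X_f$ is a product of fibrant spectra and hence fibrant, the cosimplicial spectrum is level-wise fibrant. The classical fact that an augmented, level-wise fibrant cosimplicial object equipped with an extra codegeneracy exhibits its augmentation as its homotopy limit then yields that $\widetilde{\eta}$ is a weak equivalence in $\Sigma\mathrm{Sp}$.

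The main technical point I expect to require care is ensuring that Hirschhorn's $\holim_\Delta$ in $\Sigma\mathrm{Sp}$ detects this cosimplicial contraction correctly, since $\mathrm{Sets}(G^{\bullet+1}, X_f)$ is level-wise fibrant but not obviously Reedy fibrant. I would address this either by analyzing the matching objects of $\mathrm{Sets}(G^{\bullet+1}, X_f)$ directly (the functors $\mathrm{Sets}(G^m, -)$ preserve fibrations and pullbacks, which makes Reedy fibrancy tractable), or by passing to a Reedy fibrant replacement in which the extra-codegeneracy structure persists up to cosimplicial homotopy; either route preserves the weak equivalence on $\holim_\Delta$ and completes the proof.
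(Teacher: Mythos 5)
Your construction of $i_X$ coincides with the paper's: the $G$--equivariant fibrant replacement $X \to X_f$ followed by the coaugmentation of the triple cobar construction induced by the unit $y \mapsto (g \mapsto g\cdot y)$ of $U_G \dashv \mathrm{Sets}(G,-)$. Where you diverge is in how the weak equivalence is established. You forget the $G$--action and split the augmented cosimplicial spectrum via the extra codegeneracy coming from the counit (evaluation at $e$ in the first coordinate) — correctly noting that this splitting is not $G$--equivariant — and then appeal to the fact that a split, levelwise fibrant augmented cosimplicial object realizes its augmentation as the homotopy limit. The paper instead runs the homotopy spectral sequence of the levelwise fibrant cosimplicial spectrum $\mathrm{Sets}(G^{\bullet+1}, X_f)$ and identifies its $E_2$--term, via Lemma \ref{e2term} with $K = \{e\}$, as $H^s(\{e\};\pi_t(X))$, which is concentrated in $s=0$; the proof of Lemma \ref{e2term} (exactness of the augmented cochain complex of a triple, plus a Shapiro-type acyclicity) is precisely the algebraic shadow of your splitting. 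Both routes are correct and rest on the same mechanism. The paper's route buys two things: it completely sidesteps the Reedy-fibrancy issue you flag, since the homotopy spectral sequence of Hirschhorn's $\holim$ needs only levelwise fibrancy and conditional convergence plus collapse at $E_2$ finishes the argument; and it reuses Lemma \ref{e2term}, which is needed again in the proof of Theorem \ref{mainresult}. Your spectrum-level ``classical fact'' is true, but in the model-categorical setting its cleanest justification for a merely levelwise fibrant diagram is either the $\infty$--categorical statement that split cosimplicial objects are limit diagrams preserved by all functors, or exactly the spectral-sequence collapse the paper performs — so if you want a self-contained argument, replacing your final paragraph with that computation is the most economical fix.
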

\begin{proof}
Given a spectrum $Z$, let $\mathrm{cc}^\bullet(Z)$ denote 
the constant cosimplicial spectrum on $Z$. 
Then the $G$--equivariant map $i_X$
is defined to be 
the composition 
\[i_X \: 
X \overset{\simeq}{\longrightarrow} 
X_f \overset{\cong}{\longrightarrow} \lim_\Delta \mathrm{cc}^\bullet(X_f) 
\rightarrow \holim_\Delta \mathrm{cc}^\bullet(X_f) 
\rightarrow \holim_\Delta \mathrm{Sets}(G^{\bullet+1}, X_f),\] where the 
last (rightmost) map is induced by repeated use of 
the $G$--equivariant monomorphism
$i \: Y \rightarrow \mathrm{Sets}(G, Y)$ of $G$--spectra, 
that is defined on the level of sets, for any $G$--spectrum $Y$, by the maps
\[Y_{k,l} \rightarrow \mathrm{Sets}(G, Y_{k,l}), \ \ \ y \mapsto (g \mapsto 
g \cdot y).\]
\par
Notice that for each $m \geq 0$, the spectrum of $m$-cosimplices of 
$\mathrm{Sets}(G^{\bullet+1}, X_f)$,
\[\bigl(\mathrm{Sets}(G^{\bullet+1}, X_f)\bigr)^m 
\cong \textstyle{\prod_{_{G^{m+1}}} \negthinspace X_f},\] is fibrant, so that 
$\mathrm{Sets}(G^{\bullet+1}, X_f)$ is a cosimplicial fibrant spectrum. 
Thus, there is a homotopy spectral sequence 
\begin{equation}\label{SSone}\zig
^{I\negthinspace}E_2^{s,t} \cong H^s\bigl[\pi_t(\mathrm{Sets}(G^{{\ast}+1}, X_f))\bigr] \Longrightarrow 
\pi_{t-s}\bigl(\holim_\Delta \mathrm{Sets}(G^{\bullet + 1}, X_f)\bigr).
\end{equation} By Lemma \ref{e2term}, we have
\[^{I\negthinspace}E_2^{s,t} \cong H^s(\{e\}; \pi_t(X)) =
\begin{cases} \pi_t(X), & s=0;\\ 0, & s>0,\end{cases}\] and hence, 
spectral sequence $^{I\negthinspace}E_r^{\ast, \ast}$ of (\ref{SSone}) 
collapses, showing that $i_X$ is a weak equivalence.
\end{proof}
\begin{Thm}\label{mainresult}
Let $G$ be a profinite group that has a good filtration and let 
$\mathcal{N}$ be a diagram of subgroups of $G$ that satisfies 
Definition \ref{goodfiltration}. If $X$ is an $r$--$G$--spectrum, 
then there is a zigzag of $G$--equivariant maps
\begin{equation}\label{zigzag2}\zig
X \overset{\simeq}{\longrightarrow} 
\holim_\Delta \mathrm{Sets}(G^{\bullet + 1}, X_f) 
\overset{\simeq}{\longleftarrow} 
X^\mathrm{dis}_\mathcal{N}\end{equation} 
that are weak equivalences in $\Sigma\mathrm{Sp}$. 
\end{Thm}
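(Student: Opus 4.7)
The plan is to take the map $i_X$ from Lemma \ref{lemma} as the left arrow in (\ref{zigzag2}), since it is a $G$-equivariant weak equivalence for any profinite $G$, and to focus entirely on constructing and analyzing the right arrow. The main tool is the Bousfield-Kan homotopy spectral sequence for $\holim_\Delta$ of a levelwise fibrant cosimplicial spectrum, combined with Lemma \ref{e2term}.

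\textbf{Construction of the right arrow.} For each $\alpha \in \Lambda$, the inclusion $\mathrm{Sets}(G^{\bullet+1}, X_f)^{N_\alpha} \hookrightarrow \mathrm{Sets}(G^{\bullet+1}, X_f)$ is a $G$-equivariant map of cosimplicial spectra. Applying $\holim_\Delta$ and then $\colim_\alpha$ yields the $G$-equivariant map
\[\varphi \colon X^\mathrm{dis}_\mathcal{N} \longrightarrow \holim_\Delta \mathrm{Sets}(G^{\bullet+1}, X_f).\]

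\textbf{Level-wise spectral sequence.} For each $\alpha$ and each $m$, the spectrum $\mathrm{Sets}(G^{m+1}, X_f)^{N_\alpha}$ is a product of copies of the fibrant spectrum $X_f$ and hence is fibrant, so $\mathrm{Sets}(G^{\bullet+1}, X_f)^{N_\alpha}$ is levelwise fibrant. Its Bousfield-Kan spectral sequence, via Lemma \ref{e2term}, reads
\[E_2^{s,t}(\alpha) \cong H^s(N_\alpha; \pi_t(X)) \Longrightarrow \pi_{t-s}\bigl(\holim_\Delta \mathrm{Sets}(G^{\bullet+1}, X_f)^{N_\alpha}\bigr).\]
Property (c) of the good filtration, applied to the finite discrete $N_\alpha$-module $\pi_t(X)$, identifies $H^s(N_\alpha; \pi_t(X))$ with $H^s_c(N_\alpha; \pi_t(X))$, and property (d) then forces $E_2^{s,t}(\alpha) = 0$ for $s > r_G$, giving strong convergence with support in a horizontal strip independent of $\alpha$.

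\textbf{Passing to the colimit -- the main obstacle and conclusion.} Filtered colimits in $\Sigma\mathrm{Sp}$ commute with homotopy groups of fibrant spectra, and filtered colim is exact in abelian groups, so $\colim_\alpha$ of these spectral sequences is a strongly convergent spectral sequence
\[\widetilde{E}_2^{s,t} = \colim_\alpha H^s(N_\alpha; \pi_t(X)) \Longrightarrow \pi_{t-s}(X^\mathrm{dis}_\mathcal{N}).\]
For $s = 0$, cofinality of $\mathcal{N}$ in the open normal subgroups (Remark \ref{yieldsgood}) combined with discreteness of $\pi_t(X)$ gives $\colim_\alpha \pi_t(X)^{N_\alpha} = \pi_t(X)$. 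The main obstacle is to show $\widetilde{E}_2^{s,t} = 0$ for $s > 0$: finiteness of $\pi_t(X)$ provides an $\alpha_0$ past which $N_\alpha$ acts trivially on $\pi_t(X)$, and Shapiro's lemma then rewrites $H^s_c(N_\alpha; \pi_t(X)) \cong H^s_c(G; \mathrm{Coind}^G_{N_\alpha} \pi_t(X))$. Since every continuous function $G \to \pi_t(X)$ factors through some finite quotient $G/N_\alpha$, the filtered colimit of these coinduced modules is $\mathrm{Map}_c(G, \pi_t(X)) = \mathrm{Coind}^G_{\{e\}} \pi_t(X)$, and Shapiro once more gives $\colim_\alpha H^s_c(G; \mathrm{Coind}^G_{N_\alpha} \pi_t(X)) = H^s_c(\{e\}; \pi_t(X)) = 0$. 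This vanishing is the one place where all four axioms of the good filtration are used: (a)--(b) for cofinality, (c) to move between $H^s$ and $H^s_c$, and (d) for strong convergence that survives the filtered colimit. The collapsed spectral sequence then gives $\pi_n(X^\mathrm{dis}_\mathcal{N}) \cong \pi_n(X)$, and comparison with the spectral sequence for $\holim_\Delta \mathrm{Sets}(G^{\bullet+1}, X_f)$ via the inclusions defining $\varphi$ shows that $\varphi_*$ realizes this iso in column $s = 0$, so $\varphi$ is the desired weak equivalence.
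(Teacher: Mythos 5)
Your proposal is correct and follows essentially the same route as the paper: the left arrow is Lemma \ref{lemma}, the right arrow is the same canonical map, and the heart of the argument is the identical chain of isomorphisms $\colim_\alpha H^s(N_\alpha;\pi_t(X)) \cong \colim_\alpha H^s_c(N_\alpha;\pi_t(X)) \cong H^s_c(\{e\};\pi_t(X))$ coming from Lemma \ref{e2term}, the goodness of each $N_\alpha$, and $\bigcap_\alpha N_\alpha = \{e\}$. The only organizational difference is that the paper factors the map through $\holim_\Delta \colim_\alpha \mathrm{Sets}(G^{\bullet+1},X_f)^{N_\alpha}$ and cites Mitchell's colim/holim interchange criterion (fed by the uniform bound $r_G$ from condition (d)), whereas you inline that step by taking the filtered colimit of the uniformly bounded, strongly convergent level-wise spectral sequences --- the same input, packaged differently.
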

\begin{Rk}
As stated just before Lemma \ref{lemma}, 
the above theorem says that (given a 
suitable profinite group $G$) an $r$--$G$--spectrum 
can be regarded as a discrete $G$--spectrum (in a canonical way): 
the ``$G$--equivariant zigzag" of weak equivalences in (\ref{zigzag2}) 
makes this 
statement precise.
\end{Rk} 
\begin{proof}[{Proof of Theorem \ref{mainresult}}] 
\par
By Lemma \ref{lemma}, it suffices to construct a $G$--equivariant 
map \[\phi_{\negthinspace_X} \: X^\mathrm{dis}_\mathcal{N} = \colim_{\alpha \in \Lambda} 
\holim_\Delta \mathrm{Sets}(G^{\bullet + 1}, X_f)^{N_\alpha} 
\rightarrow \holim_\Delta \mathrm{Sets}(G^{\bullet + 1}, X_f)\] and 
then show that it is a weak equivalence of spectra. 
The $G$--equivariant map $\phi_{\negthinspace_X}$
is defined to be the composition
\[\varinjlim \holim_\Delta 
\mathrm{Ens}(G^{\bullet \scriptscriptstyle{+ 1}}\negthinspace\negthinspace, X_f)^{N_\alpha} 
\overset{\overset{\phi_{\negthinspace_{\scriptscriptstyle{X}}}^{\scriptscriptstyle 1}}{}}{\longrightarrow} 
\holim_\Delta \varinjlim \mathrm{Ens}(G^{\bullet \scriptscriptstyle{+ 1}}\negthinspace\negthinspace, 
X_f)^{N_\alpha}
\overset{\overset{\phi_{\negthinspace_{\scriptscriptstyle{X}}}^{\scriptscriptstyle 2}}{}}{\longrightarrow} 
\holim_\Delta \mathrm{Ens}(G^{\bullet \scriptscriptstyle{+ 1}}\negthinspace\negthinspace, X_f)\] 
of canonical maps, 
where, here (and below), to conserve space, 
we (sometimes) use the notation ``$\,\displaystyle{\varinjlim} \, $" to denote 
``$\, \displaystyle{\colim_{\alpha \in \Lambda}}\,$", 
and ``$\mathrm{Ens}$" in place 
of ``$\mathrm{Sets}$." 
\par
The definition of the map $\phi_{\negthinspace_X}^2$ is given 
explicitly as follows: 
the collection of 
inclusions $\mathrm{Sets}(G^{\bullet + 1}, X_f)^{N_\alpha} 
\hookrightarrow \mathrm{Sets}(G^{\bullet + 1}, X_f)$ induces the 
morphism 
\[\overline{\phi_{\negthinspace_X}^2} \: \colim_{\alpha \in \Lambda} \mathrm{Sets}(G^{\bullet + 1}, X_f)^{N_\alpha}
\rightarrow \mathrm{Sets}(G^{\bullet + 1}, X_f)\] 
of cosimplicial $G$--spectra, and 
$\phi_{\negthinspace_X}^2 = 
\smash{\displaystyle{\holim_\Delta 
\overline{\phi_{\negthinspace_X}^2}}}\,$. The morphism 
$\overline{\phi_{\negthinspace_X}^2}$ also induces a 
map \[E^{_{_{\scriptstyle{\ast,\ast}}}}_{r}\negthinspace\Bigl(\overline{\phi_{\negthinspace_X}^2}\Bigr) \:  
^{II\negthinspace}E_r^{\ast,\ast} \rightarrow 
{^{I\negthinspace}E_r^{\ast,\ast}},\]
from the homotopy spectral sequence
\begin{equation}\label{SStwo}\zig
^{II\negthinspace}E_2^{s,t} \negthinspace=\negthinspace 
H^s\Bigl[\pi_t\bigl(\varinjlim 
\mathrm{Ens}(G^{\ast {\scriptscriptstyle{+ 1}}}\negthinspace, 
X_f)^{N_\alpha}\bigr)\negthinspace\Bigr] 
\Rightarrow
\pi_{t-s}\Bigl(\holim_\Delta \varinjlim 
\mathrm{Ens}(G^{\bullet {\scriptscriptstyle{+ 1}}}\negthinspace, 
X_f)^{N_\alpha}\negthinspace\Bigr)\end{equation}
to spectral sequence (\ref{SSone}). We point out that the construction of 
spectral sequence 
(\ref{SStwo}) uses the fact that for each $m \geq 0$, the spectrum of 
$m$--cosimplices of $\varinjlim 
\mathrm{Ens}(G^{\bullet {\scriptscriptstyle{+ 1}}}\negthinspace, 
X_f)^{N_\alpha}$ satisfies
\[\bigl(\colim_{\alpha \in \Lambda} 
\mathrm{Sets}(G^{\bullet+1}, 
X_f)^{N_\alpha}\bigr)^m \cong \colim_{\alpha \in \Lambda} 
\bigl(\textstyle{\prod}_{_{G/N_\alpha}} \negthinspace \negthinspace
\prod_{_{G^m}} \negthinspace X_f\bigr),\] which is a 
fibrant spectrum, since products and 
filtered colimits of fibrant spectra are again fibrant 
(the second fact is justified, for example, in \cite[Section 5]{dualityprospectra}), 
so that $\varinjlim 
\mathrm{Ens}(G^{\bullet {\scriptscriptstyle{+ 1}}}\negthinspace, 
X_f)^{N_\alpha}$ is a cosimplicial fibrant spectrum.
\par
Notice that for spectral sequence $^{II\negthinspace}E_r^{\ast,\ast}$, 
there is the 
chain of isomorphisms
\begin{equation}\label{chain}\zig
\begin{split}
^{II\negthinspace}E_2^{s,t} & \cong \colim_{\alpha \in \Lambda} 
H^s(N_\alpha; \pi_t(X)) \\
& \cong \colim_{\alpha \in \Lambda} H^s_c(N_\alpha; \pi_t(X)) \\
& \cong H^s_c\bigl(\,\textstyle{\bigcap}_{\alpha \in \Lambda} 
N_\alpha; \pi_t(X)\bigr) 
\\ & = H^s(\{e\}; \pi_t(X)),\end{split}\end{equation} 
where the first isomorphism uses 
 Lemma \ref{e2term} and 
 the fact that filtered colimits of fibrant spectra commute 
with $[S^t, -]$; the second isomorphism 
applies the assumption 
that each $N_\alpha$ is a good profinite group; and 
the last step (involving the equality) is due to property (b) of Definition 
\ref{goodfiltration}.
Therefore, there is an isomorphism 
\[^{II\negthinspace}E_2^{s,t} {\, \cong}{} \, {^{I\negthinspace}
E_2^{s,t}},\] for all $s$ and $t$, so that the 
map $E^{_{_{\scriptstyle{\ast,\ast}}}}_{r}\negthinspace\Bigl(\overline{\phi_{\negthinspace_X}^2}\Bigr)$ of spectral sequences is an isomorphism 
from the $E_2$--terms onward. Hence, the map 
$\pi_\ast(\phi^2_{\negthinspace_X}) = 
[S^\ast, \phi^2_{\negthinspace_X}]$ between the abutments 
of these conditionally convergent spectral sequences is an 
isomorphism, so that $\phi_{\negthinspace_X}^2$ is a weak equivalence.
\par
As in (\ref{chain}), there are isomorphisms
\begin{equation}\label{2ndspot}\zig
H^s\Bigl[\pi_t\bigl(\mathrm{Sets}(G^{\ast+1},X_f)^{N_\alpha}\bigr)\Bigr] 
\cong 
H^s(N_\alpha; \pi_t(X)) \cong H^s_c(N_\alpha; \pi_t(X))\end{equation} 
for each $\alpha$, and hence, 
condition (d) of Definition \ref{goodfiltration} implies 
that \begin{equation}\label{usingfinitecd}\zig
H^s\Bigl[\pi_t\bigl(\mathrm{Sets}(G^{\ast+1},X_f)^{N_\alpha}\bigr)\Bigr] 
= 0, \ \ \mathrm{for \ 
all} \ s> r_G, \ \mathrm{every} \ t \in \mathbb{Z}, \ \mathrm{and \ each \ 
\alpha}.\end{equation}
Therefore, the map 
$\phi_{\negthinspace_X}^1$ is a weak equivalence, by \cite[Proposition 3.4]{Mitchell}. 
\par
Finally, 
we can conclude that $\phi_{\negthinspace_X}$ is a weak equivalence, since 
$\phi_{\negthinspace_X}^1$ and $\phi_{\negthinspace_X}^2$ are weak equivalences.
\end{proof}
\section{An extension of the central result, Theorem \ref{mainresult}}
\par
In this section, we show -- in Theorem \ref{looser} -- that the hypotheses of Theorem 
\ref{mainresult} can be slightly loosened. We give this result in this later 
section so that Theorem \ref{mainresult} (and Section \ref{mainsection}) 
is ready-made for the intended applications. Suppose that 
$X$ is a $G$--spectrum 
with homotopy groups that are always torsion discrete $G$--modules: 
as explained in the second part of this section, the homotopy groups of 
such a $G$--spectrum 
are closely related to those of 
$r$--$G$--spectra. However, we 
explain why our proof of Theorem \ref{mainresult} does not extend 
to this more general ``torsion case."  
\par
For the rest of this section, we suppose that $G$ is 
an arbitrary profinite group and $X$ is any $G$--spectrum. 
Given this context, 
it is easy to see that Definition \ref{makeitdiscrete} and Lemma 
\ref{thisguy} depend only on condition (a) of Definition 
\ref{goodfiltration}, and hence, under only the additional 
assumption of condition (a), the spectrum $X^\mathrm{dis}_\mathcal{N}$ is 
defined and is a discrete $G$--spectrum. Also, 
the proof of Theorem \ref{mainresult} depends only 
on 
\begin{itemize}
\item[(i)]
condition (a);  
\item[(ii)] the assumption that the $G$--module 
$\pi_t(X)$ is a discrete $G$--module, for every $t \in \mathbb{Z}$; and
\item[(iii)]
part (b) of Definition \ref{goodfiltration}: $\bigcap_{\alpha \in \Lambda} 
\negthinspace N_\alpha = \{e\}$, 
\end{itemize}
{\it except} in three spots:
\begin{itemize}
\item 
in the second 
isomorphisms of (\ref{chain}) and (\ref{2ndspot}), 
in addition to (i) and (ii) above, the 
proof of Theorem \ref{mainresult} uses both 
the assumption that $\pi_t(X)$ is finite for every integer $t$ 
and part (c) of Definition \ref{goodfiltration}; and 
\item 
in (\ref{usingfinitecd}), besides (i) and (ii) above, the proof uses 
part (d) of Definition \ref{goodfiltration}. 
\end{itemize}
These observations imply the 
following result.
\begin{Thm}\label{looser}
Let $G$ be a profinite group, with $\mathcal{N} = \{N_\alpha\}_{\alpha \in \Lambda}$ an inverse system of open normal subgroups of $G$ 
that satisfies properties 
$\negthinspace\mathrm{(\mathit{a}) \, \mathit{and} \, (\mathit{b})}$ 
of Definition \ref{goodfiltration}, and let $X$ be a $G$--spectrum 
such that condition $\mathrm{(\mathit{ii})}\negthinspace$ above holds. 
Also, suppose that the map 
\[\lambda^s_{\pi_t(X)} \: H^s_c(N_\alpha; \pi_t(X)) \rightarrow H^s(N_\alpha; \pi_t(X))\] 
is an isomorphism for all $s \geq 0$, every integer $t$, and each 
$\alpha \in \Lambda$. If 
\begin{itemize}
\item
there exists a natural number $r$, such 
that for all integers $t$ and every $\alpha \in \Lambda$, 
$H^s_c(N_\alpha; \pi_t(X)) = 0$, for all $s > r$; or 
\item
there exists 
some fixed integer $l$, such that $\pi_t(X) = 0$, for all $t > l$, 
\end{itemize}
then there is a zigzag of $G$--equivariant maps
\[X \overset{\simeq}{\longrightarrow} 
\holim_\Delta \mathrm{Sets}(G^{\bullet + 1}, X_f) 
\overset{\simeq}{\longleftarrow} 
X^\mathrm{dis}_\mathcal{N}\]
that are weak equivalences in $\Sigma\mathrm{Sp}$, with
$X_\mathcal{N}^\mathrm{dis}$ 
$\mathrm{(}$defined as in Definition 
\ref{makeitdiscrete}$\mathrm{)}$ a discrete $G$--spectrum.
\end{Thm}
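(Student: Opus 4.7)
The plan is to retrace the proof of Theorem \ref{mainresult}, checking that each step survives the replacement of the hypotheses. By Lemma \ref{lemma}, the left map in the zigzag exists and is a weak equivalence for any $G$--spectrum $X$ and any profinite group $G$, so that part requires nothing new. Also, since Definition \ref{makeitdiscrete} and Lemma \ref{thisguy} only invoke condition (a) of Definition \ref{goodfiltration}, the spectrum $X_\mathcal{N}^\mathrm{dis}$ is defined and is a discrete $G$--spectrum under the present hypotheses. Thus, it suffices to construct the $G$--equivariant map
\[\phi_{\negthinspace_X} \: X^\mathrm{dis}_\mathcal{N} \longrightarrow \holim_\Delta \mathrm{Sets}(G^{\bullet + 1}, X_f)\]
exactly as in the proof of Theorem \ref{mainresult} (as the composition $\phi_{\negthinspace_X}^2 \circ \phi_{\negthinspace_X}^1$) and verify that both pieces are weak equivalences under the new assumptions.

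For $\phi_{\negthinspace_X}^2$, I would use the same spectral sequence comparison as before, between $^{I\negthinspace}E_r^{\ast,\ast}$ and $^{II\negthinspace}E_r^{\ast,\ast}$, together with the chain of isomorphisms (\ref{chain}). The first isomorphism in (\ref{chain}) continues to follow from Lemma \ref{e2term} and the commutation of filtered colimits of fibrant spectra with $[S^t,-]$, and condition (b) still gives the final equality. The only delicate point is the second isomorphism: where the original proof appealed to the goodness of $N_\alpha$ as a profinite group (applied to a \emph{finite} discrete $G$--module), the present proof instead invokes the standing assumption that $\lambda^s_{\pi_t(X)}$ is an isomorphism for every $s$, $t$, and $\alpha$, which is exactly the property needed in this step for the (possibly infinite) discrete $G$--module $\pi_t(X)$. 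The analogous replacement takes care of the second isomorphism of (\ref{2ndspot}) as well.

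For $\phi_{\negthinspace_X}^1$, the original argument used the uniform cohomological dimension bound (d) of Definition \ref{goodfiltration} to derive (\ref{usingfinitecd}) and then applied \cite[Proposition 3.4]{Mitchell}. Under the first bullet point in the hypotheses, the existence of the uniform bound $r$ for $H^s_c(N_\alpha; \pi_t(X))$, combined with the isomorphism $\lambda^s_{\pi_t(X)}$, yields (\ref{usingfinitecd}) directly (with $r$ in place of $r_{\negthinspace_G}$), and the Mitchell citation applies verbatim. Under the second bullet point, $\pi_t(X) = 0$ for $t > l$ implies the same vanishing for each cosimplicial degree of $\mathrm{Sets}(G^{\ast+1}, X_f)^{N_\alpha}$, which controls $\pi_t\bigl(\mathrm{Sets}(G^{\ast+1}, X_f)^{N_\alpha}\bigr)$ in a uniformly bounded range, so the variant of Mitchell's result for cosimplicial fibrant spectra with uniformly bounded-above homotopy groups (equivalently, one can truncate to a finite subdiagram of $\Delta$, across which filtered colimits pass through $\holim$) again gives that $\phi_{\negthinspace_X}^1$ is a weak equivalence.

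The main obstacle I anticipate is the second bullet: confirming that the cited form of \cite[Proposition 3.4]{Mitchell} covers the bounded-above homotopy case, or alternatively giving an independent argument (via truncation of $\Delta$ or via a Bousfield--Kan-style Postnikov tower for the cosimplicial spectra) that $\colim_{\alpha \in \Lambda}$ commutes with $\holim_\Delta$ in this setting. Once this commutation is in hand, combining it with the $\phi_{\negthinspace_X}^2$ argument and Lemma \ref{lemma} produces the required zigzag of $G$--equivariant weak equivalences, completing the proof.
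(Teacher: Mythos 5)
Your proposal is correct and follows essentially the same route as the paper: the paper's proof likewise consists of rechecking the proof of Theorem \ref{mainresult}, substituting the hypothesis that $\lambda^s_{\pi_t(X)}$ is an isomorphism for the goodness-plus-finiteness argument in the second isomorphisms of (\ref{chain}) and (\ref{2ndspot}), and handling $\phi^1_X$ via \cite[Proposition 3.4]{Mitchell} under either bullet. Your one worry is unfounded: Mitchell's Proposition 3.4 does admit the bounded-above alternative, and the paper invokes it exactly as you describe (noting that $\pi_t\bigl(\mathrm{Sets}(G^{m+1},X_f)^{N_\alpha}\bigr) \cong \prod_{G/N_\alpha \times G^m}\pi_t(X) = 0$ for $t>l$ is equivalent to $\pi_t(X)=0$ for $t>l$).
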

\begin{proof}
The only part of the theorem that is not justified by the remarks 
preceding it is the following. In our proof of Theorem \ref{mainresult}, 
in (\ref{usingfinitecd}), we assumed condition (d) of Definition \ref{goodfiltration}, but by 
\cite[Proposition 3.4]{Mitchell}, an alternative to assuming condition 
(d) is to require that there exists some fixed integer $l$, such that 
for each $m \geq 0$ and every $\alpha \in \Lambda,$ 
\[\pi_t\bigl(\mathrm{Sets}(G^{m+1}, X_f)^{N_\alpha}\bigr) 
\cong \textstyle{\prod_{_{G/{N_\alpha} \times G^m}} \negthinspace \pi_t(X)} 
= 0, \ \ \mathrm{for \ all} \ t>l,\] which is equivalent 
to assuming that $\pi_t(X) = 0$, for all $t > l.$ 
\end{proof}
\par 
We conclude this section by explaining why the proof of 
Theorem \ref{mainresult} fails to extend to the case when 
$X$ is a $G$--spectrum with each homotopy group a (possibly 
infinite) discrete $G$--module that is also a torsion abelian group. 
With $G$ as in Theorem \ref{mainresult}, our 
assumptions imply that for each $t \in \mathbb{Z}$, 
\[\pi_t(X) = \textstyle{\bigcup_{_\beta} \negthinspace 
M_{t,\beta}}\] is the union 
of its finite $G$--submodules $M_{t,\beta}$, each of which is 
automatically a discrete $G$--module. 
\par
As discussed at the beginning of this section, 
in the second isomorphisms in 
(\ref{chain}) and (\ref{2ndspot}), we need to know that 
for each $\alpha$ and every integer $t$,
the natural map 
\[\lambda^s_{\pi_t(X)} \: 
H^s_c(N_\alpha; \pi_t(X)) \rightarrow H^s(N_\alpha; \pi_t(X))\] 
is an isomorphism, for all $s \geq 0$. Since each $N_\alpha$ 
is a good profinite group, 
there are isomorphisms
\begin{align*}
H^s_c(N_\alpha; \pi_t(X)) & \cong \colim_\beta 
H^s_c(N_\alpha; M_{t, \beta})\\
& \cong \colim_\beta H^s(N_\alpha; M_{t,\beta}) \\
& \cong H^s\Bigl[\colim_\beta 
\mathrm{Sets}(N_\alpha^{\ast}, M_{t,\beta})\Bigr],
\end{align*} 
where, here, given an $N_\alpha$--module $M$, 
$\mathrm{Sets}(N_\alpha^\ast, M)$ denotes 
the usual cochain complex such that 
$H^s\bigl[\mathrm{Sets}(N_\alpha^\ast, M)\bigr] = 
H^s(N_\alpha; M)$, for each $s \geq 0$, with 
the abelian group of $m$--cochains equal to 
\[\mathrm{Sets}(N_\alpha^\ast, M)^m = \mathrm{Sets}(N_\alpha^m, 
M) \cong 
\textstyle{\prod_{_{N^m_\alpha}} \negthinspace M}, \ \ \mathrm{for \ 
each} \ m \geq 0.\] 
It follows that the map 
$\lambda^s_{\pi_t(X)}$ is an isomorphism if and only if 
the canonical map
\[h^{s,t} \: H^s\Bigl[\colim_\beta 
\mathrm{Sets}(N_\alpha^{\ast}, M_{t,\beta})\Bigr] \rightarrow 
H^s\Bigl[\mathrm{Sets}(N_\alpha^{\ast}, 
\textstyle{\bigcup_{_\beta} \negthinspace M_{t,\beta})}\Bigr] = 
H^s(N_\alpha; \pi_t(X))\] is an isomorphism.
\par
Since filtered colimits and infinite products do not commute in general, 
the map $h^{s,t}$ above need not be an isomorphism, so that 
$\lambda^s_{\pi_t(X)}$ need not be an isomorphism: this situation 
is the crux of 
what prevents the proof of Theorem \ref{mainresult} from going 
through in the case when each $\pi_t(X)$ is a torsion discrete $G$--module.
\begin{Rk}
Let $G$ be as in Theorem \ref{mainresult} and suppose that 
$X$ is a $G$--spectrum such that $\pi_t(X)$ is a discrete $G$--module 
and torsion abelian group, for every integer $t$. Then it 
is clear from the above discussion that if $G$, as an abstract 
group, is of type $FP_\infty$ (for background on this notion, we refer to \cite{KennethBrown}), then $H^\ast(G; -) \cong 
\mathrm{Ext}^\ast_{\mathbb{Z}[G]}(\mathbb{Z}, -)$ commutes with direct 
limits, and hence, the conclusion of Theorem \ref{mainresult} is still 
valid. 
%
Now we add the desirable condition that 
$G$ is an infinite group, and we give an argument 
that we learned from Peter Symonds. As an abstract group, if 
$G$ is of type $FP_\infty$, then it is of type $FP_1$, and hence, it is 
finitely generated (abstractly) and thereby countably infinite, 
contradicting the fact that $G$ must be uncountable (since it is 
profinite). Therefore, $G$ cannot be both infinite and, abstractly, of type 
$FP_\infty$. 
%
\end{Rk}
\section{The spectrum $X^\mathrm{dis}_\mathcal{N}$, fibrancy, and 
homotopy fixed points}\label{fibrancy}
\par
In this section, we let $G$ be any profinite group and $X$ any 
$G$--spectrum.
\begin{Def}\label{goodpair}
If $G$, $X$, and $\mathcal{N}$ (an inverse system of open normal subgroups of $G$) satisfy the hypotheses of Theorem \ref{mainresult} or 
Theorem \ref{looser}, then we say that the 
triple $(G,X, \mathcal{N})$ is {\it suitably finite}. (In the preceding sentence, by 
satisfying the hypotheses of Theorem \ref{looser}, we mean that 
$G$, $X$, and $\mathcal{N}$ 
satisfy the conditions of the first two sentences of 
Theorem \ref{looser} and at least one of the two ``itemized conditions" (that is, the conditions marked by a ``$\,\bullet \,$") listed in the third sentence of 
Theorem \ref{looser}.) Notice that if $(G,X, \mathcal{N})$ is a suitably finite triple, then there is a 
zigzag of $G$--equivariant maps 
\[X \overset{\simeq}{\longrightarrow} 
\holim_\Delta \mathrm{Sets}(G^{\bullet + 1}, X_f) 
\overset{\simeq}{\longleftarrow} 
X^\mathrm{dis}_\mathcal{N}\]
that are weak equivalences in $\Sigma\mathrm{Sp}$.
\end{Def}
\begin{Def}\label{def:homotopy}
If $(G,X, \mathcal{N})$ is a suitably finite triple, then because of the above zigzag
of equivalences between $X$ and $X^\mathrm{dis}_\mathcal{N}$, it is 
natural to identify $X$ with the discrete $G$--spectrum 
$X^\mathrm{dis}_\mathcal{N}$, and hence, to define
\[X^{hG} = (X^\mathrm{dis}_\mathcal{N})^{hG}.\] 
\end{Def}

\begin{Rk}
Let $(G, X, \mathcal{N})$ be a suitably finite triple, with the inverse system 
$\mathcal{N}$ written as $\{N_\alpha\}_{\alpha \in \Lambda}$, and 
suppose that $X$ is a discrete $G$--spectrum 
(that is, before the identification of Definition \ref{def:homotopy}, 
$X \in \Sigma\mathrm{Sp}_G$). In this case, after following 
Definition \ref{def:homotopy}, $X^{hG}$ can mean $(X_{fG})^G$ or 
$(X^\mathrm{dis}_\mathcal{N})^{hG}$. Since 
$X \in \Sigma\mathrm{Sp}_G$, the weak equivalence $i_X \: 
X \xrightarrow{\,\simeq\,} \holim_\Delta \mathrm{Sets}(G^{\bullet+1}, X_f)$ 
factors into the map $\delta \: X \to X^\mathrm{dis}_\mathcal{N}$, which is 
defined to be the composition 
\[X \xrightarrow{\,\cong\,} \colim_{\alpha \in \Lambda} X^{N_\alpha} 
\xrightarrow{\,\underset{\alpha \in \Lambda}{\colim} (i_X)^{N_\alpha}\,} 
\colim_{\alpha \in \Lambda} \bigl(\holim_\Delta \mathrm{Sets}(G^{\bullet+1}, 
X_f)\bigr)^{\negthinspace \mspace{1mu} N_\alpha} \xrightarrow{\,\cong\,} X^\mathrm{dis}_\mathcal{N}\] (the first isomorphism in the composition is 
due to the fact that, since 
$\mathcal{N}$ satisfies (a) and (b) in Definition \ref{goodfiltration}, 
$\mathcal{N}$ is a cofinal subcollection of $\{N\,|\,N \vartriangleleft_o G\}$), 
followed by the weak equivalence 
$X^\mathrm{dis}_\mathcal{N} \xrightarrow{\,\simeq\,} 
\holim_\Delta \mathrm{Sets}(G^{\bullet+1}, X_f)$, and hence, the map $\delta$ is a weak 
equivalence of spectra. It follows that $\delta$ is a weak equivalence in 
$\Sigma\mathrm{Sp}_G$; therefore, $\delta$ induces a weak equivalence 
$(X_{fG})^G \xrightarrow{\,\simeq\,} (X^\mathrm{dis}_\mathcal{N})^{hG}$, 
showing that the two possible interpretations of $X^{hG}$ are equivalent 
to each other.   
\end{Rk}

Several interesting consequences of Definition \ref{def:homotopy} are 
stated in Theorem \ref{cool} below. Before giving this result, we 
need to give some background material for its proof. 

Let $\gspt$ be the category of $G$--spectra (as defined at the end of the 
introduction): $\gspt$ has a model category structure in which a morphism 
$f$ is a weak equivalence (cofibration) if and only if $f$ is a weak equivalence 
(cofibration) when regarded as a morphism in $\Sigma\mathrm{Sp}$. The 
existence of this model structure 
follows, for example, from the fact that $\gspt$ is isomorphic to $\Sigma\mathrm{Sp}^{\{\ast_G\}}$, the 
category of functors $\{\ast_G\} \to \Sigma\mathrm{Sp}$, where 
$\{\ast_G\}$ is the one-object groupoid associated to $G$, and this 
diagram category can be equipped with an injective model structure, 
by \cite[Proposition A.2.8.2]{luriebook}, since $\Sigma\mathrm{Sp}$ is a 
combinatorial model category. 

Since the forgetful functor $U_G \: \gspt \to \Sigma\mathrm{Sp}$ 
preserves weak equivalences and cofibrations, the adjoint functors 
$(U_G, \mathrm{Sets}(G,-))$ are a Quillen pair. Also, it will be helpful 
to recall the standard fact that if $Y$ is fibrant in $\gspt$, then $Y$ is 
fibrant in $\Sigma\mathrm{Sp}$ (since, for example, an injective fibrant 
object in $\Sigma\mathrm{Sp}^{\{\ast_G\}}$ is projective fibrant in 
$\Sigma\mathrm{Sp}^{\{\ast_G\}}$ (one reference for this is \cite[Remark A.2.8.5]{luriebook}; $\Sigma\mathrm{Sp}^{\{\ast_G\}}$ has a projective model structure 
by [ibid., Proposition A.2.8.2])).  

The left adjoint functor $\Sigma\mathrm{Sp} \to \gspt$ 
that sends a spectrum to itself, but now regarded as a $G$--spectrum with trivial $G$--action, preserves weak equivalences and 
cofibrations. It follows that the right adjoint, the $G$--fixed points 
functor $(-)^G \: \gspt \to \Sigma\mathrm{Sp}$, is a 
right Quillen functor, and if $Y \to Y_\mathrm{fib}$ is a trivial cofibration 
to a fibrant object, in $\gspt$, then 
\[Y^{\widetilde{h}G} = (Y_\mathrm{fib})^G.\]  

As in \cite[Example 1.1.5.8]{luriebook}, the category $\{\ast_G\}$ can 
be regarded as a simplicial category by defining the simplicial set 
$\mathrm{Map}_{\{\ast_G\}}(\ast_G, \ast_G)$ to be the constant simplicial set on $\mathrm{Hom}_{\{\ast_G\}}(\ast_G, \ast_G)$. With 
$\mathcal{S}$ equal to the category of simplicial sets, 
it is easy to see that the category of $\mathcal{S}$-enriched functors 
from $\{\ast_G\}$ to the simplicial category 
$\Sigma\mathrm{Sp}$, with morphisms the 
$\mathcal{S}$-enriched natural transformations, is identical to the usual functor category $\Sigma\mathrm{Sp}^{\{\ast_G\}}$. Since 
$\Sigma\mathrm{Sp}$ is a simplicial model category, it follows from 
\cite[Proposition A.3.3.2, Remark A.3.3.4]{luriebook} that the 
injective model structure on $\Sigma\mathrm{Sp}^{\{\ast_G\}}$ is 
simplicial, and hence, the model category $\gspt$ is simplicial. 

Let $\holimG$ denote the homotopy limit for $\gspt$, as defined in 
\cite[Definition 18.1.8]{hirschhorn} (this definition uses the 
fact that $\gspt$ is a simplicial model category). Since the forgetful functor 
$U_G$ is a right adjoint (its left adjoint is given by the functor 
$\Sigma\mathrm{Sp} \to \gspt$ that sends a spectrum $Z$ to 
the $G$--spectrum $\bigvee_G Z$, where $G$ acts only on the 
indexing set of the coproduct), limits in $\gspt$ are formed in 
$\Sigma\mathrm{Sp}$. Also, it is a standard fact that the cotensor 
$Y^{S_{\typicaldot}}$ in $\gspt$, where $Y$ is a $G$--spectrum and 
$S_{\text{\tiny \textbullet}}$ is a 
simplicial set, is equal to the corresponding cotensor $Y^{S_{\typicaldot}}$ 
in $\Sigma\mathrm{Sp}$ equipped with the natural $G$--action. Since $\holimG$ 
is defined as the equalizer of maps between products of cotensors, 
it follows that $\holimG$ is formed in $\Sigma\mathrm{Sp}$: if 
$\{Y_c\}_{c \in \mathcal{C}}$ is a small diagram of $G$--spectra, then 
 $\holim^G_{\mathcal{C}} \{Y_c\}_c$ is equal to the spectrum 
 $\holim_{\mathcal{C}} \{Y_c\}_c$ equipped with the induced 
 $G$--action.

Now we recall \cite[Theorem 4.3]{2ndnyjm}, but we 
rewrite it for symmetric spectra ([loc. cit.] is written 
in the world of Bousfield-Friedlander spectra, but the argument is 
the same when using symmetric spectra). The forgetful functor $U \: \Sigma\mathrm{Sp}_G \to \gspt$ 
has a right adjoint, the discretization functor 
\[(-)_d \: \gspt \to \Sigma\mathrm{Sp}_G, \ \ \ Y \mapsto (Y)_d 
= \colim_{N \vartriangleleft_o G} Y^N.\] Since $U$ preserves 
weak equivalences and cofibrations, the functors $(U, (-)_d)$ are a Quillen pair.

\begin{Thm}\label{cool}
If $(G, X, \mathcal{N})$ is a suitably finite triple, then 
\[X^{hG} \simeq (X^\mathrm{dis}_\mathcal{N})^G 
\cong \bigl(\holim_\Delta 
\mathrm{Sets}(G^{\bullet + 1}, X_f)\bigr)^{\negthinspace G} \simeq X^{\widetilde{h}G}.\]
\end{Thm}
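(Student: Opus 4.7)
The plan is to prove the asserted chain one link at a time. Writing $Y := \holim_\Delta \mathrm{Sets}(G^{\bullet+1}, X_f)$ for brevity, the three links are (a) $X^{hG} \simeq (X^\mathrm{dis}_\mathcal{N})^G$, (b) the isomorphism $(X^\mathrm{dis}_\mathcal{N})^G \cong Y^G$, and (c) $Y^G \simeq X^{\widetilde{h}G}$. Each targets a different piece of the setup: (b) is a purely formal manipulation, (c) reduces to showing $Y$ is fibrant in $\gspt$, and (a) reduces to showing $X^\mathrm{dis}_\mathcal{N}$ is fibrant in $\Sigma\mathrm{Sp}_G$.

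For (b), I apply $(-)^G$ directly to $X^\mathrm{dis}_\mathcal{N} = \colim_\alpha \holim_\Delta \mathrm{Sets}(G^{\bullet+1}, X_f)^{N_\alpha}$. Each $Y_\alpha := \holim_\Delta \mathrm{Sets}(G^{\bullet+1}, X_f)^{N_\alpha}$ is a discrete $G$-spectrum whose $G$-action factors through the finite group $G/N_\alpha$, so $(-)^G$ commutes with the filtered colimit over $\alpha$ (every $G$-orbit lifts to some $Y_\alpha$ because the action factors through a finite quotient), and being a limit, $(-)^G$ commutes with the cosimplicial $\holim$; since $(\mathrm{Sets}(G^{\bullet+1}, X_f)^{N_\alpha})^G = \mathrm{Sets}(G^{\bullet+1}, X_f)^G$ for every $\alpha$ (as $N_\alpha \leq G$), the resulting filtered colimit is constant and equals $Y^G$. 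This isomorphism is concretely realized by $(-)^G$ applied to the map $\phi_{\negthinspace_X}$ from the proof of Theorem \ref{mainresult}. For (c), I show $Y$ is fibrant in $\gspt$, and then Lemma \ref{lemma}'s $\gspt$-weak equivalence $i_X \: X \xrightarrow{\simeq} Y$, together with the fact that the right Quillen functor $(-)^G \: \gspt \to \Sigma\mathrm{Sp}$ preserves weak equivalences between fibrant objects, yields $X^{\widetilde{h}G} \simeq Y^G$. Fibrancy of $Y$ follows because each level $\mathrm{Sets}(G^{m+1}, X_f) = \mathrm{Sets}(G, \mathrm{Sets}(G^m, X_f))$ is fibrant in $\gspt$ (the right Quillen functor $\mathrm{Sets}(G,-)$ applied to a fibrant spectrum), the cosimplicial $G$-spectrum is Reedy fibrant by the standard cobar argument for the Quillen pair $(U_G, \mathrm{Sets}(G,-))$, and as recorded in the excerpt $\holimG$ of this Reedy-fibrant cosimplicial object is fibrant in $\gspt$ and has underlying spectrum $Y$ with its natural $G$-action.

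The principal obstacle is (a). By definition $X^{hG} = (X^\mathrm{dis}_\mathcal{N})^{hG} = ((X^\mathrm{dis}_\mathcal{N})_{fG})^G$, so I must show the trivial cofibration $X^\mathrm{dis}_\mathcal{N} \to (X^\mathrm{dis}_\mathcal{N})_{fG}$ in $\Sigma\mathrm{Sp}_G$ becomes a weak equivalence after $(-)^G$, that is, that $X^\mathrm{dis}_\mathcal{N}$ is fibrant in $\Sigma\mathrm{Sp}_G$. My first attempt is to verify that each $Y_\alpha$ is fibrant in $\Sigma\mathrm{Sp}_G$ (its underlying spectrum is fibrant and its $G/N_\alpha$-action is through a finite group, so fibrancy in $\Sigma\mathrm{Sp}_{G/N_\alpha}$ reduces via compactness of generating trivial cofibrations to the underlying fibrancy) and then that the filtered colimit $\colim_\alpha Y_\alpha = X^\mathrm{dis}_\mathcal{N}$ preserves this fibrancy; this ought to follow because any lifting problem against the colimit reduces, via compactness of the standard generating trivial cofibrations of $\Sigma\mathrm{Sp}_G$ (whose source discrete $G$-sets have action through some $G/N_\alpha$), to a lifting problem inside a single $Y_\alpha$. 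As a fallback, I would compare descent spectral sequences: the HFPSS for $(X^\mathrm{dis}_\mathcal{N})^{hG}$ has $E_2^{s,t} = H^s_c(G; \pi_t(X))$, while the Bousfield--Kan cosimplicial spectral sequence for $Y^G$ has $E_2^{s,t} = H^s(G; \pi_t(X))$ by Lemma \ref{e2term}. These $E_2$-terms agree since $G$ has a good filtration (Remark \ref{yieldsgood}), and the natural map $(X^\mathrm{dis}_\mathcal{N})^G \to (X^\mathrm{dis}_\mathcal{N})^{hG}$ induces a map of spectral sequences; strong convergence, ensured by the uniformly bounded c.d.\ and $f$-spectrum hypotheses, then upgrades the $E_2$-isomorphism to the desired equivalence of abutments.
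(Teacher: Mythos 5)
Your three-link decomposition is exactly the paper's, and links (b) and (c) are essentially the paper's arguments: $Y=\holim_\Delta \mathrm{Sets}(G^{\bullet+1},X_f)$ is a fibrant object of $\gspt$ because each cosimplicial level is in the image of the right Quillen functor $\mathrm{Sets}(G,-)$ applied to a fibrant spectrum (no Reedy-fibrancy detour is needed -- the Hirschhorn $\holim$ of an objectwise-fibrant diagram in a simplicial model category is already fibrant), and then $Y^G\simeq X^{\widetilde{h}G}$ since $(-)^G$ is right Quillen; the middle isomorphism is the observation that $\bigl(\colim_{N}Y^N\bigr)^G\cong Y^G$.

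The gap is in link (a), which is where the real content of the theorem lives. Your primary argument claims that a $G/N_\alpha$--spectrum with fibrant underlying spectrum is fibrant in $\Sigma\mathrm{Sp}_{G/N_\alpha}$ because the group is finite. This is false: for a finite group $H$, the model structure in question is the injective one (weak equivalences and cofibrations created in $\Sigma\mathrm{Sp}$), and injective fibrancy is a genuinely equivariant descent condition, not underlying fibrancy. If it were, then for any fibrant spectrum $Z$ with trivial $H$--action one would get $Z^{hH}\simeq Z^H=Z$, contradicting $Z^{hH}\simeq F(BH_+,Z)$. The generating trivial cofibrations of the injective structure are not the free ones, so compactness does not reduce the lifting problem to the underlying spectrum. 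Your fallback (comparing the continuous descent spectral sequence with the Bousfield--Kan spectral sequence for $Y^G$) is only a sketch: the natural comparison map of cosimplicial spectra runs from continuous cochains into all cochains, i.e.\ in the direction opposite to the map $(X^\mathrm{dis}_\mathcal{N})^G\to(X^\mathrm{dis}_\mathcal{N})^{hG}$ you want to analyze, and the construction and conditional convergence of the continuous spectral sequence require finite v.c.d.\ of $G$, which is not guaranteed for every suitably finite triple (the second bulleted alternative in Theorem \ref{looser} imposes no cohomological bound). The missing idea is the paper's: since $\mathcal{N}$ is cofinal, $X^\mathrm{dis}_\mathcal{N}\cong\colim_{N\vartriangleleft_o G}Y^N=Y_d$, the discretization of the fibrant $G$--spectrum $Y$, and $(-)_d\:\gspt\to\Sigma\mathrm{Sp}_G$ is a right Quillen functor (right adjoint to the forgetful functor, which preserves cofibrations and weak equivalences). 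Hence $X^\mathrm{dis}_\mathcal{N}$ is fibrant in $\Sigma\mathrm{Sp}_G$, and applying $(-)^G$ to the trivial cofibration $X^\mathrm{dis}_\mathcal{N}\to(X^\mathrm{dis}_\mathcal{N})_{fG}$ between fibrant objects gives the equivalence $(X^\mathrm{dis}_\mathcal{N})^G\xrightarrow{\,\simeq\,}X^{hG}$ directly, with no spectral sequences.
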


\begin{proof}
Since $X_f$ is fibrant in $\Sigma\mathrm{Sp}$, $\mathrm{Sets}(G, X_f)$ is 
fibrant in $\gspt$, and hence, it is fibrant 
in $\Sigma\mathrm{Sp}$. By iterating this 
argument, we obtain that $\mathrm{Sets}(G^{\bullet + 1}, X_f)$ 
is a cosimplicial fibrant $G$--spectrum (that is, 
for each $m \geq 0$, the $m$--cosimplices are a fibrant $G$--spectrum). 
It follows that $\holim^G_\Delta \mathrm{Sets}(G^{\bullet + 1}, X_f)$ is a 
fibrant $G$--spectrum. Since $\holim^G_\Delta \mathrm{Sets}(G^{\bullet + 1}, X_f)$ is equal to the $G$--spectrum $\holim_\Delta \mathrm{Sets}(G^{\bullet + 1}, X_f)$, we write the latter instead of the former. 

Let $X \to X_\mathrm{fib}$ be a trivial cofibration 
to a fibrant object, in $\gspt$, and notice that the equivalence 
$X \xrightarrow{\simeq} 
\holim_\Delta \mathrm{Sets}(G^{\bullet + 1}, X_f)$ (in Definition \ref{goodpair}) is a weak 
equivalence with fibrant target, in $\gspt$. Then there exists a weak equivalence 
\[X_\mathrm{fib} \xrightarrow{\simeq} 
\holim_\Delta \mathrm{Sets}(G^{\bullet + 1}, X_f)\] in $\gspt$, and since 
$(-)^G \: \gspt \to \Sigma\mathrm{Sp}$ is a right Quillen functor, 
the induced map
\[X^{\widetilde{h}G} = (X_\mathrm{fib})^G \xrightarrow{\simeq} 
\bigl(\holim_\Delta \mathrm{Sets}(G^{\bullet + 1}, X_f)\bigr)^{\negthinspace G}\] is a weak equivalence. 

Since $\mathcal{N}$ 
satisfies conditions (a) and (b) 
in Definition \ref{goodfiltration}, there is an isomorphism
\[ 
X^\mathrm{dis}_\mathcal{N} 
\cong \colim_{N \vartriangleleft_o G} 
\bigl(\holim_\Delta \mathrm{Sets}(G^{\bullet + 1}, X_f)\bigr)^{\negthinspace N} 
= \bigl(\holim_\Delta \mathrm{Sets}(G^{\bullet + 1}, X_f)\bigr)_{\negthinspace d}
\] of discrete $G$--spectra, as noted in Remark \ref{neededlater}, 
and since the functor $(-)_d$ is a 
right Quillen functor, $\bigl(\holim_\Delta \mathrm{Sets}(G^{\bullet + 1}, X_f)\bigr)_{\negthinspace d}$ is a fibrant discrete $G$--spectrum, and 
hence, so is $X^\mathrm{dis}_\mathcal{N}$. Thus, applying the 
right Quillen functor $(-)^G \: \Sigma\mathrm{Sp}_G \to \Sigma\mathrm{Sp}$ to the fibrant replacement map 
$X^\mathrm{dis}_\mathcal{N} \to (X^\mathrm{dis}_\mathcal{N})_{fG}$, 
which is a trivial cofibration between fibrant objects in 
$\Sigma\mathrm{Sp}_G$, 
yields the weak equivalence
\[(X^\mathrm{dis}_\mathcal{N})^G \xrightarrow{\simeq} 
\bigl((X^\mathrm{dis}_\mathcal{N})_{fG}\bigr)^{\negthinspace G} 
= (X^\mathrm{dis}_\mathcal{N})^{hG} = X^{hG}.\]

The final step is to note that 
\[(X^\mathrm{dis}_\mathcal{N})^G \cong 
\Bigl(\mspace{1mu}\colim_{N \vartriangleleft_o G} 
\bigl(\holim_\Delta \mathrm{Sets}(G^{\bullet + 1}, X_f)\bigr)^{\negthinspace N}\Bigr)^{\mspace{-6mu} G} 
\cong \bigl(\holim_\Delta \mathrm{Sets}(G^{\bullet + 1}, X_f)\bigr)^{\negthinspace G},\] as desired. 
\end{proof}

\begin{Rk}
Let $(G, X, \mathcal{N})$ be a suitably finite triple. In light of the 
proof of Theorem \ref{cool}, we reexamine the $G$--equivariant 
zigzag \[X \xrightarrow{\,\simeq\,} 
\holim_\Delta \mathrm{Sets}(G^{\bullet+1}, X_f) \xleftarrow{\,{\simeq}\,} X^\mathrm{dis}_\mathcal{N}\] of equivalences: 
the first map is taking an explicit 
fibrant replacement of $X$ -- call it $X'$ -- in the model category of 
$G$--spectra (here, we do not require the fibrant replacement map to be a 
cofibration) and the second map is the inclusion into $X'$ 
from its largest discrete $G$--subspectrum $X^\mathrm{dis}_\mathcal{N} 
\cong (X')_d$ 
(this description of the output of the functor $(-)_d$ is used in \cite[page 861]{toriimodule} and it 
is meant to be taken literally: if $X''$ is a discrete $G$--subspectrum of $X'$, then 
the isomorphism \[X'' \cong \colim_{N \vartriangleleft_o G} (X'')^N\] shows that 
$X''$ is a $G$--subspectrum of 
\[\colim_{N \vartriangleleft_o G} (X')^N = (X')_d \cong X^\mathrm{dis}_\mathcal{N}).\] Therefore, we can think of the above zigzag as saying that $X$ is 
equivalent to an explicit model -- $X^\mathrm{dis}_\mathcal{N}$ -- for $(\mathbf{R}(-)_d)(X)$, the output of the total right derived functor $\mathbf{R}(-)_d$ of $(-)_d$ 
(recall from the proof of Theorem \ref{cool} that 
there is a weak equivalence $X_\mathrm{fib} \xrightarrow{\,\simeq\,} X'$ between 
fibrant objects in $\gspt$, and hence, there is a weak equivalence 
\[(\mathbf{R}(-)_d)(X) = (X_\mathrm{fib})_d \xrightarrow{\,{\simeq}\,} (X')_d \xrightarrow{\,\cong\,} X^\mathrm{dis}_\mathcal{N}\] of discrete 
$G$--spectra).
\end{Rk}

We can use Theorem \ref{cool} to build a descent spectral 
sequence, as follows.

\begin{Cor}\label{descentSS}
If $(G, X, \mathcal{N})$ is a suitably finite triple, then there 
is a conditionally convergent descent spectral sequence that has 
the form
\[E_2^{s,t} \cong H^s_c(G; \pi_t(X)) \cong H^s(G; \pi_t(X)) 
\Longrightarrow \pi_{t-s}(X^{\widetilde{h}G}) \cong 
\pi_{t-s}(X^{hG}).\]
\end{Cor}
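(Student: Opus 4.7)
The plan is to construct the claimed descent spectral sequence as the Bousfield-Kan homotopy spectral sequence associated to the cosimplicial spectrum $\mathrm{Sets}(G^{\bullet+1}, X_f)^G$, and then to identify its $E_2$--term and its abutment using the results already in hand.

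First I would verify that $\mathrm{Sets}(G^{\bullet+1}, X_f)^G$ is a cosimplicial fibrant spectrum. In the proof of Theorem \ref{cool} it is shown that $\mathrm{Sets}(G^{\bullet+1}, X_f)$ is a cosimplicial fibrant $G$--spectrum, and since $(-)^G \: \gspt \to \Sigma\mathrm{Sp}$ is a right Quillen functor (as recalled in Section \ref{fibrancy}), it preserves fibrant objects in each cosimplicial degree. Thus there is a conditionally convergent homotopy spectral sequence
\[E_2^{s,t} = H^s\bigl[\pi_t\bigl(\mathrm{Sets}(G^{\ast+1}, X_f)^G\bigr)\bigr] \Longrightarrow \pi_{t-s}\bigl(\holim_\Delta \mathrm{Sets}(G^{\bullet+1}, X_f)^G\bigr).\]
Because $(-)^G$ is a right adjoint and preserves the cotensors and products used to build $\holim_\Delta$ in the simplicial model category $\gspt$, the abutment is canonically isomorphic to $\pi_{t-s}\bigl((\holim_\Delta \mathrm{Sets}(G^{\bullet+1}, X_f))^G\bigr)$, which by Theorem \ref{cool} equals $\pi_{t-s}(X^{hG}) \cong \pi_{t-s}(X^{\widetilde{h}G})$.

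For the $E_2$--term, applying Lemma \ref{e2term} with $K = G$ immediately gives $E_2^{s,t} \cong H^s(G; \pi_t(X))$. To promote this to continuous cohomology, I would show that the canonical map $\lambda^s_{\pi_t(X)} \: H^s_c(G; \pi_t(X)) \to H^s(G; \pi_t(X))$ is an isomorphism, splitting into cases according to the definition of a suitably finite triple. If $(G,X,\mathcal{N})$ satisfies the hypotheses of Theorem \ref{mainresult}, then $G$ has a good filtration and each $\pi_t(X)$ is a finite discrete $G$--module, so Remark \ref{yieldsgood} provides the desired isomorphism. If instead only the hypotheses of Theorem \ref{looser} hold, then the isomorphism $\lambda^s_{\pi_t(X)}$ is built directly into the assumptions of that theorem.

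The only point requiring care is the commutation of $(-)^G$ with $\holim_\Delta$, which is needed to identify the abutment. This is handled by the simplicial model category structure on $\gspt$ recalled in Section \ref{fibrancy}: once $(-)^G$ is observed to preserve the cotensors and products that define $\holim_\Delta$ on the cosimplicial fibrant $G$--spectrum $\mathrm{Sets}(G^{\bullet+1}, X_f)$, the two possible readings of the spectral sequence's target agree. With this in place, the remainder of the argument is an assembly of Theorem \ref{cool}, Lemma \ref{e2term}, and Remark \ref{yieldsgood} (or the stated hypothesis of Theorem \ref{looser}).
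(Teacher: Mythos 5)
Your construction is the same as the paper's: form the Bousfield--Kan homotopy spectral sequence of the cosimplicial fibrant spectrum $\mathrm{Sets}(G^{\bullet+1}, X_f)^G$, identify the $E_2$--term via Lemma \ref{e2term} with $K = G$, and identify the abutment via Theorem \ref{cool}. Your explicit remark that $(-)^G$ commutes with $\holim_\Delta$ (so that the two readings of the target agree) is a detail the paper leaves implicit, and it is correct for the reason you give.

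There is, however, one incorrect claim in your final case analysis. When $(G,X,\mathcal{N})$ is suitably finite via Theorem \ref{looser}, you assert that the isomorphism $\lambda^s_{\pi_t(X)} \: H^s_c(G; \pi_t(X)) \to H^s(G; \pi_t(X))$ is ``built directly into the assumptions of that theorem.'' It is not: the hypothesis of Theorem \ref{looser} only asserts that $H^s_c(N_\alpha; \pi_t(X)) \to H^s(N_\alpha; \pi_t(X))$ is an isomorphism for each subgroup $N_\alpha$ in the inverse system $\mathcal{N}$, not for $G$ itself. To pass from the $N_\alpha$--level isomorphisms to the $G$--level one, you still need the comparison of the continuous and non-continuous Lyndon--Hochschild--Serre spectral sequences for the extensions $N_\alpha \to G \to G/N_\alpha$ (with $G/N_\alpha$ finite discrete), exactly as in Remark \ref{yieldsgood} but with the finite module $M$ there replaced by $\pi_t(X)$. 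This is how the paper handles that case. The fix is short and uses an argument you already invoke for the other case, but as written your step does not follow from the stated hypotheses.
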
 

\begin{proof}
At the beginning of the proof of Theorem \ref{cool}, we noted 
that $\mathrm{Sets}(G^{\bullet+1}, X_f)$ is a cosimplicial 
fibrant $G$--spectrum, and hence, 
$\mathrm{Sets}(G^{\bullet+1}, X_f)^G$ is a cosimplicial fibrant 
spectrum. Thus, there is a homotopy spectral sequence 
\[E_2^{s,t} = H^s\bigl[\pi_t\bigl(\mathrm{Sets}(G^{\ast+1}, 
X_f)^G\bigr)\bigr] 
\Longrightarrow \pi_{t-s}\bigl(\mspace{1mu}\holim_\Delta 
\mathrm{Sets}(G^{\bullet+1}, X_f)^G\bigr).
\] This spectral sequence is the descent spectral sequence 
described in the corollary, and the isomorphism that occurs 
in the abutment of the descent spectral sequence follows 
immediately from applying Theorem \ref{cool} to the abutment 
of the above homotopy spectral sequence. 

Lemma \ref{e2term} yields the isomorphism $E_2^{s,t} 
\cong H^s(G; \pi_t(X))$, for all $s \geq 0$ and any integer $t$. 
If the triple $(G, X,\mathcal{N})$ satisfies the hypotheses of 
Theorem \ref{mainresult}, then there is an isomorphism 
$H^s_c(G; \pi_t(X)) \cong H^s(G; \pi_t(X))$, for all $s \geq 0$ and 
any $t \in \mathbb{Z}$, by Remark 
\ref{yieldsgood}. If the triple $(G, X, \mathcal{N})$ 
satisfies the hypotheses of 
Theorem \ref{looser}, then this same isomorphism is obtained 
by applying the spectral sequence argument of Remark 
\ref{yieldsgood} to the case where the ``$M$" in the remark 
is changed to $\pi_t(X)$.
\end{proof}

To illustrate the previous result, we have the following 
special case for $G = \mathbb{Z}_p$.

\begin{Cor}
Let $p$ be any prime. If $X$ is a $\mathbb{Z}_p$--spectrum 
and an $f$--spectrum, 
then 
there is a strongly convergent descent spectral sequence 
\[E_2^{s,t} = H^s_c(\mathbb{Z}_p; \pi_t(X)) 
\Longrightarrow \pi_{t-s}(X^{h\mathbb{Z}_p}),\] with 
$E_2^{s,t} = 0$, whenever $s \geq 2$ and $t$ is any integer.
\end{Cor}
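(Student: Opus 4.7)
The plan is to exhibit this as a direct corollary of Corollary \ref{descentSS} by checking that $(\mathbb{Z}_p, X, \mathcal{N})$ is a suitably finite triple for a natural choice of $\mathcal{N}$, and then establishing the vanishing line and strong convergence.

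First I would set $\mathcal{N} = \{p^m\mathbb{Z}_p\}_{m \geq 0}$. By Theorem \ref{illustrate} applied with $H$ the trivial group (or by inspection of its proof), $\mathbb{Z}_p$ has a good filtration, with $\mathcal{N}$ satisfying the four properties of Definition \ref{goodfiltration}; in particular, in that proof one sees that the bound $r_{\mathbb{Z}_p} = 1$ works, since $\mathbb{Z}_p$ has cohomological $p$-dimension one and every $N_m \cong \mathbb{Z}_p$. Next I would verify that $X$ is an $r$-$\mathbb{Z}_p$-spectrum (Definition \ref{torsion}). By hypothesis $\pi_t(X)$ is finite for every $t$. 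Since $\mathbb{Z}_p$ is strongly complete, as noted just before Theorem \ref{intromain}, every finite $\mathbb{Z}_p$-module is automatically a discrete $\mathbb{Z}_p$-module, so $\pi_t(X)$ is a finite discrete $\mathbb{Z}_p$-module for every $t$, as required.

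Having shown that $(\mathbb{Z}_p, X, \mathcal{N})$ satisfies the hypotheses of Theorem \ref{mainresult}, Definition \ref{goodpair} says that the triple is suitably finite. Then I would apply Corollary \ref{descentSS} to obtain a conditionally convergent descent spectral sequence
\[E_2^{s,t} \cong H^s_c(\mathbb{Z}_p; \pi_t(X)) \Longrightarrow \pi_{t-s}(X^{h\mathbb{Z}_p}).\]

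For the vanishing line, I would use that $\mathbb{Z}_p$ has continuous cohomological dimension one, so $H^s_c(\mathbb{Z}_p; M) = 0$ for every discrete $\mathbb{Z}_p$-module $M$ whenever $s \geq 2$; taking $M = \pi_t(X)$ gives $E_2^{s,t} = 0$ for $s \geq 2$. Finally, strong convergence follows because a conditionally convergent spectral sequence with a horizontal vanishing line is automatically strongly convergent: the vanishing at $E_2^{s,t}$ for $s \geq 2$ forces the same vanishing at every $E_r$-page, so the filtration on each $\pi_{t-s}(X^{h\mathbb{Z}_p})$ is finite (of length at most two) and thus exhaustive and Hausdorff with trivial $\varprojlim^1$-obstruction. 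No step seems especially delicate — the only thing to be careful about is the discreteness of $\pi_t(X)$, which is handled by strong completeness of $\mathbb{Z}_p$; everything else is a direct invocation of the machinery already set up in the paper.
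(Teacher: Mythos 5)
Your proposal is correct and follows essentially the same route as the paper: verify via Theorem \ref{illustrate} and strong completeness of $\mathbb{Z}_p$ (Remark \ref{definition}) that $(\mathbb{Z}_p, X, \{p^m\mathbb{Z}_p\}_{m\geq 0})$ is a suitably finite triple, invoke Corollary \ref{descentSS}, and deduce the vanishing line from $\mathrm{cd}_p(\mathbb{Z}_p)=1$, whence strong convergence (the paper cites \cite[Lemma 5.48]{Thomason} for this last step). No gaps.
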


\begin{proof}
By Theorem \ref{illustrate}, $\mathbb{Z}_p$ has a good filtration, 
with $\mathcal{N} = \{p^m\mathbb{Z}_p\}_{m \geq 0}$. Any 
subgroup of finite index in $\mathbb{Z}_p$ is open in 
$\mathbb{Z}_p$ and $\pi_\ast(X)$ is finite in each degree, so 
that $\pi_t(X)$ is a discrete $\mathbb{Z}_p$--module 
(see Remark \ref{definition}), for every 
integer $t$. It follows that $X$ is an $r$--$G$--spectrum, and hence, 
$(\mathbb{Z}_p, X, \{p^m\mathbb{Z}_p\}_{m \geq 0})$ is a suitably 
finite triple, $X$ can be identified with the discrete $\mathbb{Z}_p$--spectrum $X^\mathrm{dis}_\mathcal{N}$, $X^{h\mathbb{Z}_p}$ is 
defined, and Corollary \ref{descentSS} gives the conditionally convergent 
spectral sequence described above. 

Since each $\pi_t(X)$ is finite and $\mathbb{Z}_p$ has 
cohomological $p$-dimension one, 
$E_2^{s,t} = H^s_c(\mathbb{Z}_p; \pi_t(X)) = 0$, whenever $s \geq 2$, for all integers $t$ (this fact 
is well-known; as a reference for the argument, see, 
for example, 
\cite[proof of Theorem 2.9]{symondsetal}), and this vanishing 
result implies that the spectral sequence is strongly convergent, 
by \cite[Lemma 5.48]{Thomason}. 
\end{proof}

\section{Filtered diagrams of suitably finite triples and their colimits}\label{consequences}

In this section, we extend Definitions 
\ref{goodpair} and \ref{def:homotopy} to the case of a filtered 
diagram of $G$--spectra.

\begin{Def}\label{hyperdef}
Let $G$ be a profinite group with $\mathcal{N}$ a fixed inverse system of open normal subgroups of $G$, and let 
$\{X_\mu\}_{\mu}$ be a filtered diagram of 
$G$--spectra (thus, the morphisms in the diagram are $G$--equivariant), 
such that for each $\mu$, $(G, X_\mu, \mathcal{N})$ is a suitably 
finite triple and $X_\mu$ is a fibrant spectrum. 
We refer to 
$(G, \{X_\mu\}_\mu, \mathcal{N})$ as a {\em 
suitably filtered triple}. 
\end{Def}

Let $(G, \{X_\mu\}_\mu, \mathcal{N})$ be a suitably filtered triple. 
Since the colimit 
of a filtered 
diagram of weak equivalences between fibrant spectra is a weak 
equivalence, there is a zigzag of 
$G$--equivariant maps 
\begin{equation}\label{zigzag}\zig
\colim_\mu X_\mu \overset{\simeq}{\longrightarrow} 
\colim_\mu \holim_\Delta \mathrm{Sets}(G^{\bullet+1}, (X_\mu)_f) 
\overset{\simeq}{\longleftarrow} 
\colim_\mu \mspace{1mu}(X_\mu)^\mathrm{dis}_\mathcal{N}\end{equation}
that are weak equivalences in $\Sigma\mathrm{Sp}$ (since each 
$\mathrm{Sets}(G^{\bullet+1}, (X_\mu)_f)$ is a cosimplicial fibrant spectrum, 
each $\holim_\Delta \mathrm{Sets}(G^{\bullet+1}, (X_\mu)_f)$ is a fibrant spectrum; 
also, by the proof of Theorem \ref{cool}, 
each $(X_\mu)^\mathrm{dis}_\mathcal{N}$ is a fibrant discrete $G$--spectrum, and thus, 
by \cite[Corollary 5.3.3]{joint}, each $(X_\mu)^\mathrm{dis}_\mathcal{N}$ 
is a fibrant spectrum). The right end of zigzag (\ref{zigzag}) satisfies 
\[\colim_\mu\mspace{1mu}(X_\mu)^\mathrm{dis}_\mathcal{N} = 
\colim_\mu \colim_{\alpha \in \Lambda} \holim_\Delta 
\mathrm{Sets}(G^{\bullet+1}, (X_\mu)_f)^{N_\alpha}\] and 
$\colim_\mu (X_\mu)^\mathrm{dis}_\mathcal{N}$ is a 
discrete $G$--spectrum. (In zigzag (\ref{zigzag}), 
since each $X_\mu$ is a fibrant spectrum, 
the fibrant replacement in each $(X_\mu)_f$ 
is not necessary. However, we believe that by leaving the 
$(-)_f$ in each $(X_\mu)_f$ and by continuing to use the 
maps $i_{X_\mu}$ as previously 
defined (in the proof of Lemma \ref{lemma}), 
our presentation is less cumbersome.) Notice that for every 
integer $t$, our hypotheses on the triple and 
zigzag (\ref{zigzag}) imply that the composition
\begin{equation}\label{3and4}\zig
\colim_\mu \pi_t(X_\mu) \xrightarrow{\,\cong\,} \pi_t(\colim_\mu X_\mu) \xrightarrow{\, \cong \,} 
\pi_t(\colim_\mu (X_\mu)^\mathrm{dis}_\mathcal{N}) \xrightarrow{\,\cong\,} 
\colim_\mu \pi_t((X_\mu)^\mathrm{dis}_\mathcal{N})\end{equation} consists of three 
isomorphisms in the category of 
discrete $G$--modules (in particular, 
each of the four abelian groups above is a discrete $G$--module). 

\begin{Def}\label{naturalforcolimit}
Given a suitably filtered triple $(G, \{X_\mu\}_\mu, \mathcal{N})$, 
the weak equivalences in zigzag (\ref{zigzag}) imply that 
the $G$--spectrum $\colim_\mu X_\mu$ can be 
identified with the discrete $G$--spectrum 
$\colim_\mu (X_\mu)^\mathrm{dis}_\mathcal{N}$. Thus, 
it is natural to define 
\[(\colim_\mu X_\mu)^{hG} = \bigl(\colim_\mu 
\mspace{1mu}(X_\mu)^\mathrm{dis}_\mathcal{N}\bigr)^{\negthinspace hG}.\] We can 
extend this definition to an arbitrary closed subgroup $K$ in $G$: since 
the $K$--spectrum $\colim_\mu X_\mu$ can 
be regarded as the discrete $K$--spectrum $\colim_\mu (X_\mu)^\mathrm{dis}_\mathcal{N}$, we define
\[(\colim_\mu X_\mu)^{hK} = \bigl(\colim_\mu 
\mspace{1mu}(X_\mu)^\mathrm{dis}_\mathcal{N}\bigr)^{\negthinspace hK}.\]
\end{Def}

\begin{Rk}\label{finite}
Let $(G, \{X_\mu\}_\mu, \mathcal{N})$ be a suitably 
filtered triple and let $K$ be a closed subgroup of $G$. Suppose that $K$ is finite, so that its topology is both 
profinite and discrete. It follows that any $K$--spectrum can itself be regarded 
as a discrete $K$--spectrum, whenever desired. Thus, the notation $(\colim_\mu X_\mu)^{hK}$ 
can mean $\bigl((\colim_\mu X_\mu)_{fK}\bigr)^{\mspace{-2mu} K}$ or it can mean $\bigl(\colim_\mu (X_\mu)^\mathrm{dis}_\mathcal{N}\bigr)^{\mspace{-2mu}hK}$. In the remainder of this remark, to avoid any ambiguity, 
we take $(\colim_\mu X_\mu)^{hK}$ to have the latter meaning, $\bigl(\colim_\mu (X_\mu)^\mathrm{dis}_\mathcal{N}\bigr)^{\mspace{-2mu}hK}$, and 
for the former meaning, $\bigl((\colim_\mu X_\mu)_{fK}\bigr)^{\mspace{-2mu} K}$, 
we just write it out as needed. Since (\ref{zigzag}) can be regarded as 
a zigzag of 
weak equivalences in the category of discrete $K$--spectra, 
there is a zigzag of weak equivalences
\[\bigl((\colim_\mu X_\mu)_{fK}\bigr)^{\mspace{-2mu}K} \xrightarrow{\,\simeq\,} 
\bigl(\colim_\mu \holim_\Delta \mathrm{Sets}(G^{\bullet+1}, 
(X_\mu)_f)\bigr)^{\mspace{-2mu} hK} 
\xleftarrow{\,\simeq\,} \bigl(\colim_\mu 
(X_\mu)^\mathrm{dis}_\mathcal{N}\bigr)^{\mspace{-2mu} hK}.\] 
Also, 
given an arbitrary $K$--spectrum $Y$, let $Y \to Y_\mathrm{fib}$ be a trivial cofibration to a fibrant object, in $K\text{-}\Sigma\mathrm{Sp}$, the category of $K$--spectra. Then we have 
\[Y^{\widetilde{h}K} = (Y_\mathrm{fib})^K 
\simeq (Y_{fK})^K,\] where 
the last equivalence follows from the fact that 
$Y_\mathrm{fib}$ is fibrant in 
$\Sigma\mathrm{Sp}_K$ (and this fibrancy assertion is true because 
the functor $(-)_d \: K\text{-}\Sigma\mathrm{Sp} \to \Sigma\mathrm{Sp}_K$ 
preserves fibrant objects and 
$(Y_\mathrm{fib})_d \cong Y_\mathrm{fib}$ is an isomorphism in 
$\Sigma\mathrm{Sp}_K$). We conclude that when $K$ is 
finite, there are equivalences
\[(\colim_\mu X_\mu)^{hK} \simeq 
\bigl((\colim_\mu X_\mu)_{fK}\bigr)^{\mspace{-2mu}K} \simeq 
(\colim_\mu X_\mu)^{\widetilde{h}K},\] as one would expect.
\end{Rk}

\par
We say that a profinite group $G$ has {\em finite virtual cohomological 
dimension} (``finite v.c.d.") if $G$ contains an open subgroup that has 
finite c.d. Under the assumption that $G$ has this property, the following 
result gives a descent spectral sequence for the 
situation described by Definition \ref{hyperdef}.  

\begin{Thm}\label{dss}
Let $G$ be a profinite group with finite v.c.d. 
If $(G, \{X_\mu\}_\mu, \mathcal{N})$ is a suitably filtered 
triple and $K$ is a closed subgroup of $G$, 
then there is a conditionally convergent descent spectral 
sequence $E_r^{\ast, \ast}\mspace{-1.6mu}(K)$ that has the form
\begin{equation}\zig\label{dssform}
E_2^{s,t}\mspace{-1.6mu}(K) = H^s_c(K; \pi_t(\colim_\mu X_\mu)) 
\Longrightarrow \pi_{t-s}\bigl((\colim_\mu X_\mu)^{hK}\bigr).
\end{equation}
\end{Thm}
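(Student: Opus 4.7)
The plan is to mimic the construction of the descent spectral sequence in Corollary \ref{descentSS}, but with $K$-fixed points in place of $G$-fixed points, and then to pass to the filtered colimit over $\mu$.

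By Definition \ref{naturalforcolimit}, the target of the spectral sequence is $\pi_{t-s}(Y^{hK})$, where $Y := \colim_\mu (X_\mu)^\mathrm{dis}_\mathcal{N}$ is a filtered colimit of discrete $G$-spectra, hence itself a discrete $G$-spectrum, and a discrete $K$-spectrum by restriction along $K \hookrightarrow G$; the chain (\ref{3and4}) identifies $\pi_t(Y) \cong \pi_t(\colim_\mu X_\mu)$ as discrete $K$-modules. Moreover, $K$ inherits finite v.c.d. from $G$: if $H \leq G$ is open with finite c.d., then $H \cap K$ is open in $K$, and as a closed subgroup of $H$ it inherits finite c.d. via Shapiro's lemma.

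Next, I would consider the cosimplicial spectrum $\mathcal{C}^\bullet := \colim_\mu \mathrm{Sets}(G^{\bullet+1}, (X_\mu)_f)^K$. It is levelwise fibrant, since each $m$-cosimplex $\mathrm{Sets}(G^{m+1}, (X_\mu)_f)^K \cong \prod_{G/K \times G^m}(X_\mu)_f$ is a product of fibrant spectra and filtered colimits of fibrant spectra are fibrant. Its homotopy spectral sequence is conditionally convergent,
\[E_2^{s,t}(K) = H^s\bigl[\pi_t(\mathcal{C}^\bullet)\bigr] \Longrightarrow \pi_{t-s}\bigl(\holim_\Delta \mathcal{C}^\bullet\bigr),\]
and by Lemma \ref{e2term} together with the commutation of $\pi_t$ with filtered colimits of fibrant spectra, one has $E_2^{s,t}(K) \cong \colim_\mu H^s(K; \pi_t(X_\mu))$. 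The abutment is identified with $\pi_*(Y^{hK})$ by mimicking the proof of Theorem \ref{cool}: the natural $K$-equivariant map $Y \to \holim_\Delta \mathcal{C}^\bullet$ is a weak equivalence with fibrant target, so applying the right Quillen functor $(-)^K$ (in the appropriate model of $K$-spectra or discrete $K$-spectra) yields the desired identification.

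To reach the stated $E_2$-term, two further identifications are needed. First, an LHS comparison argument parallel to Remark \ref{yieldsgood}, applied over an open subgroup of $K$ with finite c.d., yields $H^s_c(K; \pi_t(X_\mu)) \cong H^s(K; \pi_t(X_\mu))$ for each finite discrete $K$-module $\pi_t(X_\mu)$. Second, continuous cohomology commutes with filtered colimits of discrete $K$-modules, since any continuous cochain $K^m \to \colim_\mu M_\mu$ factors through a finite quotient of $K^m$; hence $\colim_\mu H^s_c(K; \pi_t(X_\mu)) \cong H^s_c(K; \pi_t(\colim_\mu X_\mu))$. The main obstacle is the first of these: establishing the comparison $H^s_c \cong H^s$ for the closed subgroup $K \leq G$ without assuming $K$ itself carries a good filtration, instead leveraging the good filtration of $G$ together with the finite v.c.d. of $K$ to locate a suitable open subgroup of $K$ over which to run the LHS comparison.
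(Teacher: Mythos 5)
Your proposal diverges from the paper's proof in a way that leaves a genuine gap --- one you partly flag yourself at the end. The paper does \emph{not} build the spectral sequence from the non-continuous resolution $\mathrm{Sets}(G^{\bullet+1},(X_\mu)_f)$ and then compare $H^s(K;-)$ with $H^s_c(K;-)$. Instead, it observes that $\colim_\mu (X_\mu)^{\mathrm{dis}}_{\mathcal{N}}$ is already a discrete $G$--spectrum, hence a discrete $K$--spectrum, shows that $K$ inherits finite v.c.d.\ from $G$ exactly as you do (via $U\cap K$ and Shapiro's Lemma), and then invokes the established descent spectral sequence for discrete $K$--spectra over a group of finite v.c.d.\ (from Behrens--Davis and \cite{cts}), realized by the cosimplicial spectrum $\Gamma_K^\bullet \colim_\mu (X_\mu)^{\mathrm{dis}}_{\mathcal{N}}$ whose cosimplices are built from \emph{continuous} cochains, $\colim_{V\vartriangleleft_o K^m}\prod_{K^m/V}(-)$. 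Continuous cohomology therefore appears in the $E_2$--term directly, and no comparison map $\lambda^s$ for the subgroup $K$ is ever needed.

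The step you call ``the main obstacle'' is in fact not closable from the stated hypotheses. To run the Lyndon--Hochschild--Serre comparison of Remark \ref{yieldsgood} for $K$, you would need an open normal subgroup of $K$ on which $\lambda^s_{\pi_t(X_\mu)}$ is an isomorphism; but the hypotheses only provide this for the groups $N_\alpha\subseteq G$ (and, in the Theorem \ref{looser} case, only for the specific modules $\pi_t(X_\mu)$). Goodness is not inherited by closed subgroups, so there is no reason for $U\cap K$ or $K\cap N_\alpha$ to satisfy the required comparison, and finite v.c.d.\ of $K$ does not substitute for it. A second, related problem is your identification of the abutment: $\holim_\Delta \colim_\mu \mathrm{Sets}(G^{\bullet+1},(X_\mu)_f)^K$ computes (at best) a discrete-group-style fixed point object, and matching it with the continuous $Y^{hK}=\bigl((Y)_{fK}\bigr)^K$ would require knowing that the relevant object is fibrant in $\Sigma\mathrm{Sp}_K$, or rerunning the argument of Theorem \ref{cool} with the discretization functor for $K$ --- which again forces you to compute $\colim_{V\vartriangleleft_o K}H^s(V;\pi_t(X_\mu))$ and runs into the same unavailable goodness hypothesis for open subgroups of $K$. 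The fix is to abandon the $\mathrm{Sets}(G^{\bullet+1},-)$ resolution at this stage and work with the discrete $K$--spectrum $\colim_\mu (X_\mu)^{\mathrm{dis}}_{\mathcal{N}}$ and the continuous resolution $\Gamma_K^\bullet$, as the paper does; your verification that $K$ has finite v.c.d.\ is exactly the input that machinery needs.
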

\begin{Rk}
If $G$ has a good filtration, then condition (d) of 
Definition \ref{goodfiltration} implies that $G$ has finite v.c.d. Thus, 
if $(G, \{X_\mu\}_\mu, \mathcal{N})$ is a suitably filtered triple such that 
there is some $\mu_0 \in \{\mu\}_\mu$ for which 
the triple $(G, X_{\mu_0}, \mathcal{N})$ 
satisfies the hypotheses of Theorem \ref{mainresult}, then 
$G$ has finite v.c.d. and the first sentence of Theorem \ref{dss} can be 
omitted.
\end{Rk}
\begin{proof}[Proof of Theorem \ref{dss}]
Let $U$ be an open subgroup of $G$ that 
has finite c.d. Then $U \cap K$ is an open subgroup of $K$, and since 
$U$ has finite c.d. and $U \cap K$ is closed in $U$, 
there exists some $r$ such that for any discrete 
$(U \cap K)$--module $M$, 
\[H^s_c(U \cap K; M) \cong H^s_c(U; \mathrm{Coind}^U_{U \cap K}\mspace{-1mu}(M)) = 0, \ \ \ \text{whenever} \ s>r,\] by Shapiro's Lemma. This shows that 
$K$ has finite v.c.d. Therefore, \cite[proofs of Theorem 3.2.1, Proposition 3.5.3]{joint} and 
\cite[proof of Theorem 7.9]{cts} yield the conditionally convergent 
spectral sequence 
\[E_2^{s,t} = H^s_c(K; \pi_t(\colim_\mu \mspace{1mu}(X_\mu)^\mathrm{dis}_\mathcal{N})) 
\Longrightarrow \pi_{t-s}\Bigl(\bigl(\colim_\mu 
\mspace{1mu}(X_\mu)^\mathrm{dis}_\mathcal{N}\bigr)^{\negthinspace hK}\Bigr),\]
and this is the desired spectral sequence, since the middle map in composition 
(\ref{3and4}) is an isomorphism of discrete $K$--modules. 

We provide some more detail (based on the above two references) because it will 
be useful to us later. Since $K$ has finite v.c.d., 
\[\bigl(\colim_\mu 
\mspace{1mu}(X_\mu)^\mathrm{dis}_\mathcal{N}\bigr)^{\negthinspace hK} \simeq 
\holim_\Delta \Gamma_K^\bullet \colim_\mu 
\mspace{1mu}(X_\mu)^\mathrm{dis}_\mathcal{N},\] and for each $m \geq 0$, 
the $m$-cosimplices of 
cosimplicial spectrum $\Gamma_K^\bullet \colim_\mu 
\mspace{1mu}(X_\mu)^\mathrm{dis}_\mathcal{N}$ satisfy the isomorphism
\begin{equation}\zig\label{uselater}
\bigl(\Gamma_K^\bullet \colim_\mu 
\mspace{1mu}(X_\mu)^\mathrm{dis}_\mathcal{N}\bigr)^{\negthinspace m} 
\cong \colim_{V \vartriangleleft_o K^m} \textstyle{\prod}_{K^m/V} 
\mspace{1mu}\displaystyle{\colim_\mu} \mspace{1mu}(X_\mu)^\mathrm{dis}_\mathcal{N},
\end{equation} where $K^m$ is the $m$-fold Cartesian product of $K$ ($K^0$ is the trivial group $\{e\}$, equipped with the discrete topology). (For more 
detail about this, we refer the reader to \cite[Sections 2.4, 3.2]{joint}.) 

The above spectral sequence is the homotopy spectral sequence for 
the spectrum 
$\holim_\Delta \Gamma_K^\bullet \colim_\mu 
(X_\mu)^\mathrm{dis}_\mathcal{N}$. 
Based on \cite[proof of Theorem 3.2.1]{joint} and 
\cite[proof of Theorem 7.9]{cts}, the reader might expect us to instead form 
the homotopy spectral sequence for 
$\holim_\Delta \Gamma_K^\bullet \bigl(\colim_\mu 
(X_\mu)^\mathrm{dis}_\mathcal{N}\bigr)_{\negthinspace fK}$. But since each 
$(X_\mu)^\mathrm{dis}_\mathcal{N}$ is a fibrant spectrum, 
$\colim_\mu (X_\mu)^\mathrm{dis}_\mathcal{N}$ is already a fibrant spectrum, so 
that we do not need to apply $(-)_{fK}$ to it (so that we are taking the 
homotopy limit of a cosimplicial fibrant spectrum). 
\end{proof}

Notice that if $(G, \{X_\mu\}_\mu, \mathcal{N})$ is a suitably filtered triple, 
then for each $\mu' \in \{\mu\}_\mu$, $(G, \{X_\mu\}_{\mu \in \{\mu'\}}, \mathcal{N})$ 
is a suitably filtered triple, so that Definition \ref{naturalforcolimit} gives 
\[(X_{\mu'})^{hK} = \bigl((X_{\mu'})^\mathrm{dis}_\mathcal{N}\bigr)^{\negthinspace 
hK},\] for any closed subgroup $K$ of $G$.

\begin{Thm}\label{commute}
Let $G$ be a profinite group with finite v.c.d., let $(G, \{X_\mu\}_\mu, \mathcal{N})$ be a suitably filtered triple such that 
$\{\mu\}_\mu$ is a directed poset, and let $K$ be a closed subgroup of $G$.  
If there exists 
a nonnegative integer $r$ such that for all $t \in \mathbb{Z}$ and each $\mu$, $H^s_c(K; \pi_t(X_\mu)) = 0$ 
whenever $s > r$, then descent spectral sequence 
$E_r^{\ast, \ast}\mspace{-1.6mu}(K)$ in 
$\mathrm{(}\mspace{0mu}$\ref{dssform}$\mspace{2mu}\mathrm{)}$ is strongly convergent and 
there is an equivalence of spectra
\[(\colim_\mu X_\mu)^{hK} \simeq \colim_\mu \mspace{1mu}(X_\mu)^{hK}.\]
\end{Thm}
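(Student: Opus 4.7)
The plan is to handle both conclusions together via a single spectral-sequence comparison, with the uniform vanishing hypothesis playing the decisive role in each.

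For strong convergence of $E_r^{\ast,\ast}(K)$: Theorem \ref{dss} already gives conditional convergence, so by \cite[Lemma 5.48]{Thomason} it suffices to exhibit a horizontal vanishing line on the $E_2$-page. Combining the isomorphism $\pi_t(\colim_\mu X_\mu) \cong \colim_\mu \pi_t(X_\mu)$ of discrete $G$-modules (recorded in (\ref{3and4})) with the standard fact that continuous cohomology of a closed subgroup of a profinite group commutes with filtered colimits of discrete coefficient modules (the Lyndon--Hochschild--Serre comparison of Remark \ref{yieldsgood} works verbatim, since each continuous cochain group $\mathrm{Map}_c(K^m, -)$ commutes with filtered colimits of discrete modules), one obtains
\[
E_2^{s,t}(K) = H^s_c\bigl(K;\, \pi_t(\textstyle{\colim_\mu} X_\mu)\bigr) \cong \colim_\mu H^s_c(K; \pi_t(X_\mu)),
\]
which vanishes for $s > r$ by hypothesis.

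For the equivalence, by Definition \ref{naturalforcolimit} and the model for $(-)^{hK}$ used in the proof of Theorem \ref{dss}, there are natural equivalences
\[
(\textstyle{\colim_\mu} X_\mu)^{hK} \simeq \holim_\Delta \Gamma_K^\bullet\bigl(\textstyle{\colim_\mu} (X_\mu)^\mathrm{dis}_\mathcal{N}\bigr), \qquad (X_\mu)^{hK} \simeq \holim_\Delta \Gamma_K^\bullet (X_\mu)^\mathrm{dis}_\mathcal{N}.
\]
The explicit formula (\ref{uselater}) for the $m$-cosimplices of $\Gamma_K^\bullet$ involves only finite products $\prod_{K^m/V}$ and filtered colimits over $\{V \vartriangleleft_o K^m\}$, both of which commute with the filtered colimit over $\{\mu\}_\mu$; hence the cosimplicial fibrant spectra $\Gamma_K^\bullet(\colim_\mu (X_\mu)^\mathrm{dis}_\mathcal{N})$ and $\colim_\mu \Gamma_K^\bullet (X_\mu)^\mathrm{dis}_\mathcal{N}$ are canonically isomorphic level-wise. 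The desired equivalence therefore reduces to showing that the canonical interchange map
\[
\colim_\mu \holim_\Delta \Gamma_K^\bullet (X_\mu)^\mathrm{dis}_\mathcal{N} \longrightarrow \holim_\Delta \colim_\mu \Gamma_K^\bullet (X_\mu)^\mathrm{dis}_\mathcal{N}
\]
is a weak equivalence of spectra.

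The main obstacle is precisely this interchange of $\holim_\Delta$ with the filtered colimit, and the bound $r$ is exactly the hypothesis that forces it through. I would prove it by comparing homotopy spectral sequences of the two cosimplicial fibrant spectra above. The right-hand side yields the descent spectral sequence $E_r^{\ast,\ast}(K)$ analyzed above. The left-hand side is the filtered colimit of the homotopy spectral sequences of the $\holim_\Delta \Gamma_K^\bullet (X_\mu)^\mathrm{dis}_\mathcal{N}$, each of which has $E_2$-term $H^s_c(K; \pi_t(X_\mu))$ by Lemma \ref{e2term} together with the identification of continuous and non-continuous cohomology from Remark \ref{yieldsgood}; since $\pi_\ast$ commutes with filtered colimits of fibrant spectra, passing to the colimit gives an $E_2$-page $\colim_\mu H^s_c(K; \pi_t(X_\mu))$. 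Under the interchange map these $E_2$-pages are identified by the argument of the first paragraph, and both vanish in rows $s > r$, so each spectral sequence converges strongly by \cite[Lemma 5.48]{Thomason}. The comparison map therefore induces an isomorphism on abutments and is a weak equivalence of spectra, completing the proof.
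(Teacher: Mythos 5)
Your proof is correct and follows essentially the same route as the paper: the same identification $E_2^{s,t}(K)\cong\colim_\mu H^s_c(K;\pi_t(X_\mu))$ for strong convergence, the same use of (\ref{uselater}) to commute the filtered colimit past $\Gamma_K^\bullet$ level-wise, and the same reduction to the colim/holim interchange over $\Delta$. The only cosmetic difference is that the paper cites \cite[Proposition 3.4]{Mitchell} for that interchange, taking the uniform vanishing $H^s_c(K;\pi_t(X_\mu))=0$ for $s>r$ as input, whereas you re-derive that proposition by hand via the spectral-sequence comparison.
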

\begin{proof}
For all $t \in \mathbb{Z}$, when $s > r$, we have
\[E_2^{s,t}(K) = H^s_c(K; \pi_t(\colim_\mu X_\mu)) \cong \colim_\mu 
H^s_c(K; \pi_t(X_\mu)) = 0,\] so that the spectral sequence is strongly 
convergent, by \cite[Lemma 5.48]{Thomason}.

If $V$ is an open normal subgroup of $K^m$, where $m \geq 0$, then 
$K^m/V$ is finite, and hence, isomorphism (\ref{uselater}) implies that 
\[\bigl(\Gamma_K^\bullet \colim_\mu 
\mspace{1mu}(X_\mu)^\mathrm{dis}_\mathcal{N}\bigr)^{\negthinspace m} 
\cong \colim_\mu \colim_{V \vartriangleleft_o K^m} \textstyle{\prod}_{K^m/V} 
\mspace{1mu}(X_\mu)^\mathrm{dis}_\mathcal{N} 
\cong \displaystyle{\colim_\mu} \bigl(\Gamma_K^\bullet (X_\mu)^\mathrm{dis}_\mathcal{N}\bigr)^{\negthinspace m},\] so that there is an isomorphism 
\[\Gamma_K^\bullet \colim_\mu 
\mspace{1mu}(X_\mu)^\mathrm{dis}_\mathcal{N} \cong 
\displaystyle{\colim_\mu} 
\,\Gamma_K^\bullet (X_\mu)^\mathrm{dis}_\mathcal{N}\] of cosimplicial spectra. 
Therefore, we have
\[
\bigl(\colim_\mu 
\mspace{1mu}(X_\mu)^\mathrm{dis}_\mathcal{N}\bigr)^{\negthinspace hK} 
\simeq 
\holim_\Delta \Gamma_K^\bullet \colim_\mu 
\mspace{1mu}(X_\mu)^\mathrm{dis}_\mathcal{N} \cong 
\holim_\Delta \colim_\mu \Gamma_K^\bullet (X_\mu)^\mathrm{dis}_\mathcal{N},\]
which gives
\begin{align*}
(\colim_\mu X_\mu)^{hK} \simeq \holim_\Delta \colim_\mu \Gamma_K^\bullet (X_\mu)^\mathrm{dis}_\mathcal{N} \longleftarrow 
& \colim_\mu \holim_\Delta \Gamma_K^\bullet (X_\mu)^\mathrm{dis}_\mathcal{N} 
\\ & 
\simeq \colim_\mu \bigl((X_\mu)^\mathrm{dis}_\mathcal{N}\bigr)^{\negthinspace hK} 
= \colim_\mu (X_\mu)^{hK},
\end{align*} 
and the canonical colim/holim exchange map 
above is a weak equivalence if there exists a nonnegative integer $r$ such that 
for every $t$ and all $\mu$, 
\[H^s\bigl[ \pi_t\bigl(\Gamma^\ast_K (X_\mu)^\mathrm{dis}_\mathcal{N}\bigr)\bigr] = 0, \ \ \ \text{when} \ s > r,\] 
by \cite[Proposition 3.4]{Mitchell}. The proof is completed by noting that there 
are isomorphisms
\[H^s\bigl[ \pi_t\bigl(\Gamma^\ast_K (X_\mu)^\mathrm{dis}_\mathcal{N}\bigr)\bigr] 
\cong H^s_c(K; \pi_t((X_\mu)^\mathrm{dis}_\mathcal{N})) \cong 
H^s_c(K; \pi_t(X_\mu)),\] for all $s \geq 0$.
\end{proof}

\section{The proofs of Theorems \ref{appliedDSS} and 
\ref{theoremaboutmap}}\label{section-proof}

After proving Theorem \ref{appliedDSS}, a task which 
ends with (\ref{finally}), 
we prove Theorem \ref{theoremaboutmap}.

Let $p \geq 5$ and let $K$ be any closed subgroup of $\mathbb{Z}_p^\times$. 
As noted in the proof of Theorem \ref{illustrate}, $\mathbb{Z}_p$ 
has finite c.d., and since it is open in 
$\mathbb{Z}_p^\times$, $\mathbb{Z}_p^\times$ has finite v.c.d. Also, 
in the introduction (see Remark \ref{terminology}), we showed that 
\[\bigl(\mathbb{Z}_p^\times, \bigl\{\bigl(K(KU_p) \wedge \Sigma^{-jd}V(1)\bigr)_{\negthinspace f}\bigr\}_{\negthinspace j \geq 0}, \mathcal{N}\bigr),\]
where $\mathcal{N}$ is as defined in Remark \ref{terminology}, is a suitably 
filtered triple. Therefore, by Theorem \ref{dss}, there is a conditionally convergent 
descent spectral sequence that has the form 
\begin{equation}\label{8dss}\zig
E_2^{s,t} \Rightarrow 
\pi_{t-s}\bigl(\bigl(K(KU_p) \wedge v_2^{-1}V(1)\bigr)^{\negthinspace hK}\bigr),\end{equation} 
where
\begin{align*}
E_2^{s,t} & = H^s_c\bigl(K; \pi_t\bigl(\colim_{j \geq 0}\bigl(K(KU_p) \wedge 
\Sigma^{-jd}V(1)\bigr)_{\negthinspace f}\bigr)\bigr) \\
& \cong H^s_c(K; \pi_t(K(KU_p) \wedge V(1))[v_2^{-1}]),\end{align*} as 
desired.

Since $p \geq 5$, $V(1)$ is a homotopy commutative and homotopy associative ring spectrum \cite{oka}, 
so that $\pi_\ast(K(KU_p) \wedge V(1))$ is a graded right 
$\pi_\ast(V(1))$--module, and 
hence, $\pi_\ast(K(KU_p) \wedge V(1))$ is a unitary $\mathbb{F}_p$--module. 
It follows that for every integer $t$, the finite abelian group 
$\pi_t(K(KU_p) \wedge V(1))$ is a $p$--torsion group (that is, $pm = 0$, 
for all $m \in \pi_t(K(KU_p) \wedge V(1))$). 

Given any profinite group $G$, we use $\mathrm{cd}_p(G)$ to denote its 
cohomological $p$--dimension. Since $K$ is closed in $\mathbb{Z}_p^\times$, 
\[\mathrm{cd}_p(K) \leq \mathrm{cd}_p(\mathbb{Z}_p^\times) = \mathrm{cd}_p(\mathbb{Z}_p) = 1,\] where the first equality is due to the fact that 
$\mathbb{Z}_p$ is the $p$--Sylow subgroup of $\mathbb{Z}_p^\times$, and hence, 
\begin{equation}\zig\label{vanishingreplacement}
H^s_c(K; M) = 0, \ \ \ \text{for all} \ s \geq 2,
\end{equation}
whenever $M$ is a discrete $K$--module that is also $p$--torsion. Now 
choose any $j \geq 0$. For each $t \in \mathbb{Z}$ and all $s \geq 0$, 
there is an isomorphism 
\[
H^s_c\bigl(K ; \pi_t\bigl(\bigl(K(KU_p) \wedge \Sigma^{-jd}V(1)\bigr)_{\negthinspace f}\bigr)\bigr) 
\cong H^s_c(K; \pi_{t+jd}(K(KU_p) \wedge V(1))).\] Then (\ref{vanishingreplacement}) implies that for every integer $t$, 
\begin{equation}\zig\label{vanishngtwo}
H^s_c\bigl(K ; \pi_t\bigl(\bigl(K(KU_p) \wedge \Sigma^{-jd}V(1)\bigr)_{\negthinspace f}\bigr)\bigr) = 0, \ \ \ \text{for all} \ s \geq 2, 
\end{equation}
since the discrete $K$--module $\pi_{t+jd}(K(KU_p) \wedge V(1))$ is $p$--torsion.

We have now verified the hypotheses of Theorem \ref{commute}, so that 
descent spectral sequence (\ref{8dss}) is strongly convergent, $E_2^{s,t} = 0$ 
for all integers $t$ whenever $s \geq 2$ (see the first sentence of the proof of 
Theorem \ref{commute}), and there is the equivalence
\begin{equation}\zig\label{desiredequivone}
\bigl(K(KU_p) \wedge v_2^{-1}V(1)\bigr)^{\negthinspace hK} 
\simeq \colim_{j \geq 0} \bigl(\bigl(K(KU_p) \wedge \Sigma^{-jd}V(1)\bigr)_{\negthinspace f}\bigr)^{\mspace{-2mu}hK}.\end{equation}

Let $G$ be any profinite group and let $X_1$ and $X_2$ be arbitrary $G$--spectra, 
such that $(G, X_1, \mathcal{U})$ and $(G, X_2, \mathcal{U})$ are suitably finite 
triples (the inverse system $\mathcal{U}$ is the same in each triple) and there is a weak equivalence 
$w \: X_1 \to X_2$ in $\gspt$. The equivalence $w$ induces the commutative diagram 
\[
\xymatrix{X_1 \ar^-\simeq[r] \ar^-\simeq_-w[d] & 
\smash{\displaystyle{\holim_\Delta \mathrm{Sets}}}(G^{\bullet+1}, 
(X_1)_f) \ar[d] & (X_1)^\mathrm{dis}_\mathcal{U} \ar_-\simeq[l] \ar^-{w^\mathrm{dis}_\mathcal{U}}[d] \\
X_2 \ar^-\simeq[r] & \displaystyle{\holim_\Delta \mathrm{Sets}(G^{\bullet+1}, 
(X_2)_f)} & (X_2)^\mathrm{dis}_\mathcal{U} \ar_-\simeq[l]}\] in which 
each ``$\simeq$" denotes a weak equivalence in $\gspt$. From the left commutative square, it follows that the middle vertical map in the diagram is a weak equivalence in $\gspt$, and hence, the right commutative square implies that the 
$G$--equivariant map $w^\mathrm{dis}_\mathcal{U}$ is a weak equivalence of spectra, which allows us to conclude that $w^\mathrm{dis}_\mathcal{U}$ is a 
weak equivalence in $\Sigma\mathrm{Sp}_G$. 

As in Definition \ref{naturalforcolimit}, for any suitably finite triple $(G, X, \mathcal{U})$ and any closed subgroup $P$ of $G$, it is natural to define 
\[X^{hP} = (X^\mathrm{dis}_\mathcal{U})^{hP}\] (this extends Definition \ref{def:homotopy}). For any $P$, since $w^\mathrm{dis}_\mathcal{U}$ is a weak equivalence 
in the category of discrete $P$--spectra, it follows that the induced map 
\[(X_1)^{hP} = ((X_1)^\mathrm{dis}_\mathcal{U})^{hP} \xrightarrow{\,\simeq\,} 
((X_2)^\mathrm{dis}_\mathcal{U})^{hP} = (X_2)^{hP}\] is a weak equivalence. 

For each $j \geq 0$, the triples \[(\mathbb{Z}_p^\times, K(KU_p) \wedge \Sigma^{-jd}V(1), \mathcal{N}) \ \ \ \text{and} \ \ \ \bigl(\mathbb{Z}_p^\times, \bigl(K(KU_p) \wedge \Sigma^{-jd}V(1)\bigr)_{\negthinspace f}, \mathcal{N}\bigr)\] are suitably finite, the 
natural fibrant replacement map \[K(KU_p) \wedge \Sigma^{-jd}V(1) \xrightarrow{\,\simeq\,} 
\bigl(K(KU_p) \wedge \Sigma^{-jd}V(1)\bigr)_{\negthinspace f}\] is a weak 
equivalence in the category of $\mathbb{Z}_p^\times$--spectra, 
and $K(KU_p) \wedge \Sigma^{-jd}V(1)$ can be identified with the discrete 
$\mathbb{Z}_p^\times$--spectrum $(K(KU_p) \wedge \Sigma^{-jd}V(1))^\mathrm{dis}_\mathcal{N}$, as in Definition \ref{def:homotopy}. It follows from the 
above discussion that for each $j \geq 0$ and each closed subgroup $K$, 
there is the definition 
\[(K(KU_p) \wedge \Sigma^{-jd}V(1))^{hK} = 
\bigl((K(KU_p) \wedge \Sigma^{-jd}V(1))^\mathrm{dis}_\mathcal{N}\bigr)^{\mspace{-2mu} hK}\] and 
there is a weak equivalence 
\[(K(KU_p) \wedge \Sigma^{-jd}V(1))^{hK} \xrightarrow{\,\simeq\,} 
\bigl(\bigl(K(KU_p) \wedge \Sigma^{-jd}V(1)\bigr)_{\negthinspace f}\bigr)^{\mspace{-2mu} hK}\] 
between fibrant spectra, giving a weak equivalence 
\begin{equation}\zig\label{desiredequivtwo} 
\colim_{j \geq 0} (K(KU_p) \wedge \Sigma^{-jd}V(1))^{hK} 
\xrightarrow{\,\simeq\,} \colim_{j \geq 0} 
\bigl(\bigl(K(KU_p) \wedge \Sigma^{-jd}V(1)\bigr)_{\negthinspace f}\bigr)^{\mspace{-2mu} hK}.\end{equation}
From (\ref{desiredequivone}) and (\ref{desiredequivtwo}), we obtain an 
equivalence 
\begin{equation}\label{finally}\zig
\bigl(K(KU_p) \wedge v_2^{-1}V(1)\bigr)^{\mspace{-2mu} hK} \simeq 
\colim_{j \geq 0} (K(KU_p) \wedge \Sigma^{-jd}V(1))^{hK}.
\end{equation}

\begin{proof}[Proof of Theorem \ref{theoremaboutmap}]
Setting $n=1$ in (\ref{galois-extension}) gives 
the $K(1)$--local profinite $\mathbb{Z}_p^\times$--Galois extension $L_{K(1)}(S^0) \to KU_p$, and this map yields a $\mathbb{Z}_p^\times$--equivariant map 
$K(L_{K(1)}(S^0)) \to K(KU_p)$, with $\mathbb{Z}_p^\times$ acting trivially on 
$K(L_{K(1)}(S^0))$. Thus, for each $j \geq 0$, the induced map 
\[K(L_{K(1)}(S^0)) \wedge \Sigma^{-jd} V(1) \to K(KU_p) \wedge \Sigma^{-jd} V(1) 
\xrightarrow{\,\simeq\,} \bigl(K(KU_p) \wedge \Sigma^{-jd} V(1)\bigr)_{\negthinspace f}\] 
is $\mathbb{Z}_p^\times$--equivariant, giving the canonical map to the fixed points, 
\begin{equation}\label{pre-map}\zig
K(L_{K(1)}(S^0)) \wedge \Sigma^{-jd} V(1) \to 
\bigl(\bigl(K(KU_p) \wedge \Sigma^{-jd} V(1)\bigr)_{\negthinspace f}\bigr)^{\mspace{-1mu}\mathbb{Z}_p^\times}.\end{equation} It follows that 
there is the map
\begin{equation}\zig\label{mapeins}
K(L_{K(1)}(S^0)) \wedge v_2^{-1}V(1) \to 
\colim_{j \geq 0} \bigl(\bigl(K(KU_p) \wedge \Sigma^{-jd} V(1)\bigr)_{\negthinspace f}\bigr)^{\mspace{-1mu}\mathbb{Z}_p^\times},\end{equation} which is 
defined to be the composition
\[K(L_{K(1)}(S^0)) \wedge v_2^{-1}V(1) 
\xrightarrow{\,\cong\,} \colim_{j \geq 0} (K(L_{K(1)}(S^0)) \wedge \Sigma^{-jd} V(1)) 
\rightarrow \colim_{j \geq 0} (KV_j)^{\mathbb{Z}_p^\times},\] where 
here and below, we use the notation 
\[KV_j := \bigl(K(KU_p) \wedge \Sigma^{-jd} V(1)\bigr)_{\negthinspace f} \, , \ \ \ 
\text{for} \ j \geq 0,\] to keep certain expressions from being too long (and the second 
map in the composition is obtained by taking a colimit of the maps given by (\ref{pre-map})).

For the diagram of $\mathbb{Z}_p^\times$--equivariant maps 
\[\bigl\{i_{\scriptscriptstyle{KV_j}} \: KV_j \xrightarrow{\,\simeq\,} \holim_\Delta \mathrm{Sets}((\mathbb{Z}_p^\times)^{\bullet+1}, (KV_j)_f)\bigr\}_{\negthinspace j \geq 0},\] taking fixed points and then the colimit gives the 
canonical map 
\begin{equation}\zig\label{mapzwei}
\colim_{j \geq 0} (KV_j)^{\mathbb{Z}_p^\times} \to \colim_{j \geq 0} \bigl(\holim_\Delta \mathrm{Sets}((\mathbb{Z}_p^\times)^{\bullet+1}, (KV_j)_f)\bigr)^{\mspace{-2mu}\mathbb{Z}_p^\times}.\end{equation}
Also, for each $j \geq 0$ (and with $\mathcal{N}$ as defined in Remark \ref{terminology}), there are natural isomorphisms 
\begin{align*}
\bigl(\holim_\Delta \mathrm{Sets}((\mathbb{Z}_p^\times)^{\bullet+1}, (KV_j)_f)\bigr)^{\mspace{-2mu}\mathbb{Z}_p^\times} 
& \cong \Bigl(\colim_{\mspace{1mu}N \vartriangleleft_o \mathbb{Z}_p^\times} \bigl(\holim_\Delta \mathrm{Sets}((\mathbb{Z}_p^\times)^{\bullet+1}, (KV_j)_f)\bigr)^{\mspace{-2mu} N}\Bigr)^{\mspace{-2mu}\mathbb{Z}_p^\times} \\ & \cong 
\bigl((KV_j)^\mathrm{dis}_\mathcal{N}\bigr)^{\mspace{-2mu}\mathbb{Z}_p^\times},
\end{align*} 
where the last step is due to the isomorphism 
\[\colim_{N \vartriangleleft_o \mathbb{Z}_p^\times} \bigl(\holim_\Delta \mathrm{Sets}((\mathbb{Z}_p^\times)^{\bullet+1}, (KV_j)_f)\bigr)^{\mspace{-2mu} N}
\cong (KV_j)^\mathrm{dis}_\mathcal{N}\] in the category of 
discrete $\mathbb{Z}_p^\times$--spectra (which itself is valid by Remark \ref{yieldsgood} (see its first two sentences)), and hence, there is the isomorphism
\begin{equation}\zig\label{mapdrei}
\colim_{j \geq 0} \bigl(\holim_\Delta \mathrm{Sets}((\mathbb{Z}_p^\times)^{\bullet+1}, (KV_j)_f)\bigr)^{\mspace{-2mu}\mathbb{Z}_p^\times} \xrightarrow{\,\cong\,} 
\colim_{j \geq 0} \bigl((KV_j)^\mathrm{dis}_\mathcal{N}\bigr)^{\mspace{-2mu} 
\mathbb{Z}_p^\times}.
\end{equation}

Finally, there is the composition of canonical maps
\begin{equation}\zig\label{mapvier}
\colim_{j \geq 0} \bigl((KV_j)^\mathrm{dis}_\mathcal{N}\bigr)^{\mspace{-2mu} 
\mathbb{Z}_p^\times} 
\to \bigl(\colim_{j \geq 0} (KV_j)^\mathrm{dis}_\mathcal{N}\bigr)^{\mspace{-2mu} \mathbb{Z}_p^\times} = (C^\mathrm{dis}_p)^{\mathbb{Z}_p^\times} \to 
\bigl((C^\mathrm{dis}_p)_{f\mathbb{Z}_p^\times}\bigr)^{\mspace{-2mu}\mathbb{Z}_p^\times},\end{equation} where 
the first map is due to the universal property of the colimit and the second map 
is obtained by applying fixed points to the fibrant replacement map. 
The target of map (\ref{mapvier}) is equal to 
$\bigl(K(KU_p) \wedge v_2^{-1}V(1)\bigr)^{\mspace{-2mu}h\mathbb{Z}_p^\times}$, and the composition 
of maps (\ref{mapeins}), (\ref{mapzwei}), (\ref{mapdrei}), and (\ref{mapvier}) 
(that is, after omitting the source and target from each map, 
the composition $\xrightarrow{(\ref{mapeins})}\,\xrightarrow{(\ref{mapzwei})}\,\xrightarrow{(\ref{mapdrei})}\,\xrightarrow{(\ref{mapvier})}\,$) 
defines the desired map
\[K(L_{K(1)}(S^0)) \wedge v_2^{-1}V(1) \to 
\bigl(K(KU_p) \wedge v_2^{-1}V(1)\bigr)^{\mspace{-2mu}h\mathbb{Z}_p^\times}.
\qedhere\]
\end{proof}

\section{The proof of Theorem \ref{surprising}}\label{section-surprising}
As in the preceding section, we continue with letting $p \geq 5$. By Theorem \ref{appliedDSS} (in particular, see (\ref{finally})), there is an equivalence 
\begin{align*}
\bigl(K(KU_p) \wedge v_2^{-1}V(1)\bigr)^{h\mathbb{Z}_p^\times} 
\simeq \colim_{j \geq 0} (K(KU_p) \wedge \Sigma^{-jd}V(1))^{h\mathbb{Z}_p^\times},
\end{align*} 
and for each $j \geq 0$, 
$(\mathbb{Z}_p^\times, K(KU_p) \wedge \Sigma^{-jd}V(1), 
\mathcal{N})$ (with $\mathcal{N}$ as defined in Remark \ref{terminology}) is a suitably finite triple. Then by the proof 
of Theorem \ref{cool} (the spectrum $(K(KU_p) \wedge \Sigma^{-jd}V(1))^\mathrm{dis}_\mathcal{N}$ is a 
fibrant discrete $\mathbb{Z}_p^\times$--spectrum, for each $j$), 
there are weak equivalences 
\begin{align*}
\colim_{j \geq 0} (K(KU_p) \wedge 
\Sigma^{-jd}V(1))^{h\mathbb{Z}_p^\times} & = 
\colim_{j \geq 0} \Bigl(\bigl((K(KU_p) \wedge \Sigma^{-jd}V(1))^\mathrm{dis}_\mathcal{N}\bigr)_{\mspace{-3mu}f\mathbb{Z}_p^\times}\Bigr)^{\mspace{-3mu}\mathbb{Z}_p^\times} \\ 
& \xleftarrow{\,\simeq\,} 
\colim_{j \geq 0} \bigl((K(KU_p) \wedge \Sigma^{-jd}V(1))^\mathrm{dis}_\mathcal{N}\bigr)^{\mathbb{Z}_p^\times} \\ 
& \xleftarrow{\,\simeq\,} 
\colim_{j \geq 0} (K(KU_p) \wedge 
\Sigma^{-jd}V(1))^{\widetilde{h}\mathbb{Z}_p^\times}. 
\end{align*}

The last weak equivalence above requires a little more justification. 
Let $J$ denote the indexing category $\{j \geq 0 \}$ for the above colimits. For any profinite group $G$, the model structure on 
$\gspt$ is combinatorial, by \cite[Proposition A.2.8.2]{luriebook}, 
and hence, $(\gspt)^J$, the category of $J$--shaped diagrams in 
$\gspt$, can be equipped with a projective model structure (again, 
by \cite[Proposition A.2.8.2]{luriebook}). Thus, we regard 
$(\zpspt)^J$ as having a projective model structure, and we let 
\[\{K(KU_p) \wedge \Sigma^{-jd}V(1)\}_{_{\mspace{-1.5mu}j \geq 0}} 
\xrightarrow{\,\simeq\,} 
\{(K(KU_p) \wedge \Sigma^{-jd}V(1))_\mathrm{pf}\}_{_{\mspace{-1.5mu}j \geq 0}}\] 
be a trivial cofibration to a fibrant object, in $(\zpspt)^J$. Notice that 
by the proof of Theorem \ref{cool}, the morphism 
\[\{K(KU_p) \wedge \Sigma^{-jd}V(1)\}_{_{\mspace{-1.5mu}j \geq 0}} \xrightarrow{\,\simeq\,} 
\bigl\{\holim_\Delta \mathrm{Sets}((\mathbb{Z}_p^\times)^{\bullet+1}, 
(K(KU_p) \wedge \Sigma^{-jd}V(1))_f)\bigr\}_{_{\mspace{-3mu}j \geq 0}}\] is a weak equivalence to a fibrant object, in $(\zpspt)^J$, 
and hence, there is a morphism
\[\{(K(KU_p) \wedge \Sigma^{-jd}V(1))_\mathrm{pf}\}_{_{\mspace{-1.5mu}j \geq 0}} \xrightarrow{\,\simeq\,} 
\bigl\{\holim_\Delta \mathrm{Sets}((\mathbb{Z}_p^\times)^{\bullet+1}, 
(K(KU_p) \wedge \Sigma^{-jd}V(1))_f)\bigr\}_{_{\mspace{-4mu}j \geq 0}}\] that is a weak equivalence 
between fibrant objects, in $(\zpspt)^J$. As in the proof of 
Theorem \ref{cool}, the 
application of the right Quillen functor \[(-)^{\mathbb{Z}_p^\times} \: 
\zpspt \rightarrow \Sigma\mathrm{Sp}\] to the last morphism induces 
compositions
\begin{align*}
((K(KU_p) \wedge \Sigma^{-jd}V(1))_\mathrm{pf})^{\mathbb{Z}_p^\times} & \mspace{-2mu}\xrightarrow{\simeq}\mspace{-2mu} 
\bigl(\holim_\Delta \mathrm{Sets}((\mathbb{Z}_p^\times)^{\bullet+1}, 
\mspace{-1mu}(K(KU_p) \wedge \Sigma^{-jd}V(1))_f)\bigr)^{\mspace{-1mu}\mathbb{Z}_p^\times} \\
& \xrightarrow{\cong}\mspace{-2mu} 
\bigl((K(KU_p) \wedge \Sigma^{-jd}V(1))^\mathrm{dis}_\mathcal{N}\bigr)^{\mspace{-1mu}\mathbb{Z}_p^\times}\end{align*} 
for all $j \geq 0$, with each composition a weak equivalence between fibrant spectra. Taking the 
colimit over $J$ of these weak equivalences yields the weak 
equivalence 
\[\omega \: \colim_{j \geq 0} ((K(KU_p) \wedge \Sigma^{-jd}V(1))_\mathrm{pf})^{\mathbb{Z}_p^\times} \xrightarrow{\,\simeq\,} 
\colim_{j \geq 0} \bigl((K(KU_p) \wedge \Sigma^{-jd}V(1))^\mathrm{dis}_\mathcal{N}\bigr)^{\mspace{-1mu}\mathbb{Z}_p^\times}.
\] By \cite[Remark A.2.8.5]{luriebook}, every projective cofibration is 
an injective cofibration, so that for each $j$, the map
\[K(KU_p) \wedge \Sigma^{-jd}V(1) \xrightarrow{\,\simeq\,} (K(KU_p) \wedge \Sigma^{-jd}V(1))_\mathrm{pf}\] is a trivial cofibration 
to a fibrant object in $\zpspt$. It follows from this 
that the source of weak equivalence $\omega$ satisfies the 
equality
\[\colim_{j \geq 0} ((K(KU_p) \wedge \Sigma^{-jd}V(1))_\mathrm{pf})^{\mathbb{Z}_p^\times} = 
\colim_{j \geq 0} (K(KU_p) \wedge \Sigma^{-jd}V(1))^{\widetilde{h}\mathbb{Z}_p^\times},\] 
and thus, $\omega$ is the weak equivalence 
that we set out in this paragraph to obtain.

Fix $j \geq 0$. Since $V(1)$ is a finite spectrum, 
$\Sigma^{-jd}V(1)$ is too, and hence, there is an equivalence 
\[(K(KU_p) \wedge 
\Sigma^{-jd}V(1))^{\widetilde{h}\mathbb{Z}_p^\times} \simeq 
(K(KU_p))^{\widetilde{h}\mathbb{Z}_p^\times} \wedge 
\Sigma^{-jd}V(1)\] 
(for example, see \cite[Lemma 6.2.6]{Rognes}; the key point here is that a homotopy limit commutes with smashing with a finite spectrum). 
It follows from our last equivalence that
\begin{align*}
\colim_{j \geq 0} (K(KU_p) \wedge 
\Sigma^{-jd}V(1))^{\widetilde{h}\mathbb{Z}_p^\times} 
& \simeq \colim_{j \geq 0} 
\bigl((K(KU_p))^{\widetilde{h}\mathbb{Z}_p^\times} 
\wedge \Sigma^{-jd}V(1)\bigr) \\
& \cong (K(KU_p))^{\widetilde{h}\mathbb{Z}_p^\times} \wedge 
v_2^{-1}V(1).
\end{align*} 
Putting all of the equivalences above together yields 
\[ 
\bigl(K(KU_p) \wedge v_2^{-1}V(1)\bigr)^{h\mathbb{Z}_p^\times} 
\simeq (K(KU_p))^{\widetilde{h}\mathbb{Z}_p^\times} \wedge 
v_2^{-1}V(1),
\qedhere 
\] 
which is the desired equivalence.

\end{document}